\documentclass[11pt,reqno]{amsart}
\usepackage[dvipdfm,a4paper,top=30mm,bottom=25mm,left=25mm,right=25mm]{geometry}
\usepackage{amsmath,amssymb,mathtools,cite}
\usepackage{comment,mathrsfs}

\usepackage[colorlinks=true,linkcolor=blue,citecolor=blue]{hyperref}

\usepackage{graphicx}

\newtheorem{theorem}{Theorem}[section]
\newtheorem{lemma}[theorem]{Lemma}
\newtheorem{proposition}[theorem]{Proposition}
\newtheorem{corollary}[theorem]{Corollary}

\theoremstyle{definition}
\newtheorem{definition}[theorem]{Definition}

\theoremstyle{remark}
\newtheorem{remark}[theorem]{Remark}

\numberwithin{equation}{section}

\allowdisplaybreaks[4]

\def\rnum#1{\expandafter{\romannumeral #1}} 
\def\Rnum#1{\uppercase\expandafter{\romannumeral #1}}

\renewcommand{\Re}{{\operatorname{Re}\,}} 
\renewcommand{\Im}{{\operatorname{Im}\,}} 
\def\rad{\text{rad}}

\newcommand{\e}{{\varepsilon}}

\newcommand{\N}{{\mathbb{N}}} 
\newcommand{\R}{{\mathbb{R}}} 

\def\~#1{\widetilde #1}

\def\({\left(}
\def\){\right)}
\def\[{\left[}
\def\]{\right]}
\def\<{\left\langle}
\def\>{\right\rangle}

\begin{document}

\title[]{Scattering and blow-up solutions to nonlinear Schr\"odinger equation beyond the threshold}


\author{Masaru Hamano}
\address{Center for Science Adventure and Collaborative Research Advancement, Graduate School of Science, Kyoto University, Kyoto 606-8502, Japan.}
\email{hamano.masaru.3c@kyoto-u.ac.jp}









\begin{abstract}
In this paper, we consider nonlinear Schr\"odinger equation.
Duyckaerts--Roudenko [Comm. Math. Phys. \textbf{334} (2015), no. 3, 1573--1615] investigated time behavior of solutions to the equation, whose mass-energy is greater than that of the ground state.
We revisit the results and show by using the action.
Moreover, we apply it to other equations without scale invariance.
\end{abstract}

\maketitle

\tableofcontents


\section{Introduction}

In this paper, we consider nonlinear Schr\"odinger equation
\begin{align}\label{NLS}
	\left\{ \hspace{-0.2cm}
	\begin{array}{l}
		\displaystyle
		i\partial_t u + \Delta u
			= - |u|^{p-1}u, \quad (t,x) \in \R \times \R^d, \\
		\displaystyle
		u(0,\cdot)
			= u_0
			\in H^1(\R^d),
	\end{array}
	\right.
\end{align}
where
$
	1+\frac{4}{d}
		< p
		< 2^\ast-1
$
for
$
	2^\ast
		:= \infty
$
$
	(d = 1,2)
$
and
$
	2^\ast
		:= \frac{2d}{d-2}
$
$
	(d \geq 3).
$
This equation has the following scaling property:
If
$
	u
$
is a solution to \eqref{NLS}, then
$
	u_\lambda
		:= \lambda^\frac{2}{p-1}u(\lambda^2\cdot, \lambda\cdot)
$
$
	(\lambda > 0)
$
is also a solution to \eqref{NLS}.
Under this scaling, we have
$
	\|u_\lambda(0)\|_{\dot{H}^{s_c}}
		= \|u(0)\|_{\dot{H}^{s_c}}
$
for
\begin{align}\label{Sc}
	s_c
		:= \frac{d}{2} - \frac{2}{p-1}.
\end{align}
Therefore,
$
	p
		= 1 + \frac{4}{d}
$
($
	\Longleftrightarrow
	s_c
		= 0
$)
is called
$
	L^2
$-critical (or mass-critical) exponent and
$
	p
		= 2^\ast-1
$
($
	\Longleftrightarrow
	s_c
		= 1
$)
is called
$
	H^1
$-critical (or energy-critical) exponent.
$
	H^1
$-solution exists, at least locally in time under
$
	1
		< p
		< 2^\ast-1
$
and it conserves its mass and energy defined as
\begin{align*}
	M(f)
		:= \|f\|_{L^2}^2
	\ \text{ and }\ 
	E(f)
		:= \frac{1}{2}L(f) - \frac{1}{p+1}N(f),
\end{align*}
where
\begin{align}\label{176}
	L(f)
		:= \|\nabla f\|_{L^2}^2
	\ \text{ and }\ 
	N(f)
		:= \|f\|_{L^{p+1}}^{p+1}.
\end{align}
Moreover, the following blow-up criterion holds:
For the maximal existence
$
	T_{\max},
$
we have either
$
	T_{\max}
		= \infty
$
or
``$
	T_{\max}
		< \infty
$
and
$
	\lim_{t \rightarrow T_{\max}}\|\nabla u(t)\|_{L^2}
		= \infty".
$
A corresponding property also holds for the minimal existence time
$
	T_{\min}.
$

We investigate global behavior of the solutions to \eqref{NLS}.
By the choice of initial data, the solutions to \eqref{NLS} show various behaviors, scattering, blow-up, and so on.

\begin{definition}
Let
$
	u
$
be a solution to \eqref{NLS} on the maximal existence time interval
$
	(T_{\min},T_{\max}).
$
\begin{itemize}
\item
(Scattering)
If the solution
$
	u
$
exists globally in the forward time and there exists
$
	\psi_+
		\in H^1(\R^d)
$
such that
\begin{align*}
	\lim_{t \rightarrow \infty}\|u(t) - e^{it\Delta}\psi_+\|_{H^1}
		= 0,
\end{align*}
then we call the solution
$
	u
$
scatters in the forward time.
For scattering in the backward time, the definition is similar.
We say that the solution
$
	u
$
to \eqref{NLS} scatters simply when it scatters in the forward and backward times.
\item
(Blow-up)
If the maximal forward existence time
$
	T_{\max}
$
is finite, then the solution
$
	u
$
to \eqref{NLS} blows up in the forward time.
For blow-up in the backward time, the definition is similar.
We say that the solution
$
	u
$
to \eqref{NLS} blows up simply when it blows up in the forward and backward times.
\end{itemize}
\end{definition}

Beside scattering solutions and blow-up solutions, \eqref{NLS} has also standing wave solutions
$
	u(t,x)
		= e^{i\omega t}Q_\omega(x).
$
We see that
$
	Q_\omega
$
satisfies the following elliptic equation:
\begin{align}\label{SP}
	- \omega Q_\omega + \Delta Q_\omega
		= - |Q_\omega|^{p-1}Q_\omega.
\end{align}
Corresponding functional is defined as
\begin{align}\label{Sw}
	S_\omega(f)
		:= \frac{\omega}{2}M(f) + E(f).
\end{align}
Especially, there exists the ground state
$
	Q_\omega
$
to \eqref{SP} and is characterized as the optimizer to a minimizing problem
\begin{align}\label{nw}
	n_\omega
		:= \inf\{S_\omega(f) : f \in H^1(\R^d) \setminus \{0\}, K(f) = 0\},
\end{align}
that is,
\begin{align}\label{MP}
	S_\omega(Q_\omega)
		= n_\omega
	\ \text{ and }\ 
	K(Q_\omega)
		= 0,
\end{align}
where the virial functional
$
	K
$
is defined as
\begin{align}\label{K}
	K(f)
		:= \left.\frac{d}{d\lambda}\right|_{\lambda = 0}S_\omega(e^{d\lambda}f(e^{2\lambda}\cdot))
		= 2L(f) - \frac{d(p-1)}{p+1}N(f).
\end{align}
From now on,
$
	Q_\omega
$
denotes the ground state to \eqref{SP}.
Since
$
	Q_\omega
$
is a solution \eqref{SP} and has the property \eqref{MP}, the identities
\begin{align}\label{PI}
	\omega M(Q_\omega) + L(Q_\omega)
		= N(Q_\omega)
	\ \text{ and }\ 
	2L(Q_\omega)
		= \frac{d(p-1)}{p+1}N(Q_\omega)
\end{align}
hold.
Moreover,
$
	Q_\omega
$
attains also the Gagliardo--Nirenberg inequality
\begin{align}\label{G-N inequality}
	N(f)
		\leq C_{\rm GN}M(f)^\frac{(p-1)(1-s_c)}{2}L(f)^{\frac{(p-1)s_c}{2}+1}
	\quad \text{for any }f \in H^1(\R^d),
\end{align}
where
$
	C_{\rm GN}
$
is the best constant.
This inequality deduces a framework of the following theorem given in \cite{DuWuZha15, DuyHolRou08, FanXieCaz11, HolRou08, HolRou10} by using tools enveloped in \cite{Gla77, KenMer06, OgaTsu91}:
\begin{theorem}\label{Below}
Let
$
	d
		\in \N,
$
$
	1+\frac{4}{d}
		< p
		< 2^\ast-1,
$
and
$
	u_0
		\in H^1(\R^d).
$
Assume that
$
	u_0
$
satisfy
\begin{align} \label{100}
	M(u_0)^\frac{1-s_c}{s_c}E(u_0)
		< M(Q_1)^\frac{1-s_c}{s_c}E(Q_1),
\end{align}
where
$
	s_c
$
is given in \eqref{Sc}.
\begin{itemize}
\item
(Scattering)
If
$
	M(u_0)^{1-s_c}L(u_0)^{s_c}
		< M(Q_1)^{1-s_c}L(Q_1)^{s_c},
$
then the solution
$
	u
$
to \eqref{NLS} satisfies
\begin{align*}
	\sup_{t \in \R}M(u(t))^{1-s_c}L(u(t))^{s_c}
		< M(Q_1)^{1-s_c}L(Q_1)^{s_c}
\end{align*}
and scatters.
\item
(Blow-up or grow-up)
If
$
	M(u_0)^{1-s_c}L(u_0)^{s_c}
		> M(Q_1)^{1-s_c}L(Q_1)^{s_c},
$
then the solution
$
	u
$
to \eqref{NLS} blows up or grows up, where we call
$
	u
$
grows up in the forward time when
$
	u
$
satisfies
$
	T_{\max}
		= \infty
$
and
\begin{align*}
	\limsup_{t \rightarrow \infty}\|\nabla u(t)\|_{L^2}
		= \infty.
\end{align*}
In addition, when either
$
	xu_0
		\in L^2(\R^d)
$
or
``
$
	d
		\geq 2,
$
$
	p
		\leq 5,
$
and
$
	u_0
$
is radially symmetric'' is satisfied, the solution
$
	u
$
to \eqref{NLS} blows up.
\end{itemize}
\end{theorem}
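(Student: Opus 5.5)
The argument follows the Holmer--Roudenko / Duyckaerts--Holmer--Roudenko scheme, driven by the sharp Gagliardo--Nirenberg inequality \eqref{G-N inequality} and attained by $Q_1$.

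\textbf{Step 1: trapping.} Set $y(t):=M(u)^{1-s_c}L(u(t))^{s_c}$, conserved mass, and $\sigma:=M(u)^{\frac{1-s_c}{s_c}}E(u)$. By \eqref{G-N inequality},
\[
M(u)^{\frac{1-s_c}{s_c}}E(u)\ \geq\ f\big(M(u)^{\frac{1-s_c}{s_c}}L(u)\big),\qquad f(x):=\frac12 x-\frac{C_{\rm GN}}{p+1}x^{\frac{(p-1)s_c}{2}+1}.
\]
Here we use that $(p-1)(1-s_c)/2$ multiplied by $\frac{1-s_c}{s_c}$-normalization matches after raising to the power $1/s_c$. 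The exponent $\frac{(p-1)s_c}{2}+1=\frac{d(p-1)}{4}>1$, so $f$ has a unique maximum at some $x_\ast$. Using \eqref{PI} with $\omega=1$ and equality in GN for $Q_1$, one checks $x_\ast=M(Q_1)^{\frac{1-s_c}{s_c}}L(Q_1)$ and $f(x_\ast)=M(Q_1)^{\frac{1-s_c}{s_c}}E(Q_1)$. Since $\sigma<f(x_\ast)$ and $t\mapsto M^{\frac{1-s_c}{s_c}}L(u(t))$ is continuous, the solution cannot cross $x_\ast$. Hence the sign of $y-y(Q_1)$ is preserved on the whole lifespan.

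Moreover, there is $\delta>0$ depending on $\sigma$ with $|x-x_\ast|\geq\delta x_\ast$. Combined with \eqref{PI} and $K=2L-\frac{d(p-1)}{p+1}N$, this gives uniform coercivity in the two cases:
\begin{align*}
K(u(t))&\geq c\,L(u(t)) &&\text{(scattering case)},\\
K(u(t))&\leq -c<0 &&\text{(blow-up case)}.
\end{align*}
In the second case one can also obtain $K(u(t))\le -cL(u(t))$ via $K=\frac{d(p-1)}{2}E-\big(\frac{d(p-1)}{4}-1\big)L$ scaled appropriately. In the first case $L$ is bounded, so the solution is global.

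\textbf{Step 2: scattering.} I would run the Kenig--Merle concentration-compactness/rigidity argument. Small data scatter by Strichartz estimates. Suppose the critical threshold $\sigma_c$ below $f(x_\ast)$ is not the full range. A linear profile decomposition in $H^1$ together with the nonlinear perturbation lemma then produces a minimal non-scattering solution $u_c$. Its orbit is precompact in $H^1$ modulo translations $x(t)$, and since there is no scaling in $H^1$ at fixed mass the orbit is otherwise compact.

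Rigidity proceeds in two parts. Galilean invariance and momentum conservation give zero momentum, which yields $x(t)=o(t)$. A localized virial identity $\frac{d}{dt}\Im\int\phi_R\bar u\nabla u\cdot\ldots=4K(u)+o_R(1)$ then gives, from $K\geq cL\geq c'>0$, a contradiction after integrating on $[0,T]$ with $R\sim o(T)$.

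Alternatively, Dodson--Murphy's interaction-Morawetz approach avoids profiles, at least in the radial case or for $d\ge 3$ and general $d$ with the appropriate variants. This step is the main obstacle, mainly because of the bookkeeping in the profile decomposition and in controlling $x(t)$.

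\textbf{Step 3: blow-up.} If $xu_0\in L^2$, Glassey's virial identity $\frac{d^2}{dt^2}\|xu\|_{L^2}^2=8K(u)\leq -8c$ forces finite time in both directions. In the radial case with $d\geq2$ and $p\leq5$, I would use the Ogawa--Tsutsumi localized virial with radial Strauss decay to bound the error by $\epsilon L+C R^{-\cdots}$, then absorb it using $K\le -cL$.

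For general data, the dichotomy "blow-up or grow-up" follows by contradiction. Suppose $T_{\max}=\infty$ and $\sup_{t\ge0}L(u(t))<\infty$. The localized virial error is controlled by $R^{-2}$ plus a tail term. Following Du--Wu--Zhang, I would bound the mass outside a ball of radius $R$ at time $t\lesssim R/\sqrt{\sup L}$ using $\frac{d}{dt}\int\chi_R|u|^2\lesssim R^{-1}\sqrt{\sup L}$. Integrating $\frac{d}{dt}V_R\le -c$ over $[0,\eta R]$ then contradicts $|V_R|\lesssim R\sqrt{\sup L}$ for a suitable small $\eta$ and large $R$.
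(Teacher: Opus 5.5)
Your outline follows the route this paper relies on; the paper does not prove this theorem but quotes it from Holmer--Roudenko, Duyckaerts--Holmer--Roudenko, Fang--Xie--Cazenave and Du--Wu--Zhang, with Glassey, Kenig--Merle and Ogawa--Tsutsumi as tools. Steps 1 and 2, and the finite-variance and radial parts of Step 3, are sound in outline. There are harmless factor-$2$ slips: with the paper's $K$, one has $K=d(p-1)E-\frac{d(p-1)-4}{2}L$ and $\frac{d^2}{dt^2}\|xu\|_{L^2}^2=4K(u)$.

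\textbf{The gap.} The general-data ``blow-up or grow-up'' step does not close as written. Suppose $T_{\max}=\infty$ and $L_\ast:=\sup_{t\geq0}L(u(t))<\infty$. Your tail bound gives $\int_{|x|>R}|u(t)|^2dx\leq o_R(1)+C\eta\sqrt{M(u_0)L_\ast}$ for $0\leq t\leq\eta R$. So the virial errors are absorbed only if $\eta\leq\eta_0$, with $\eta_0$ small depending on $c$, $M(u_0)$ and $L_\ast$. Your $V_R$ is the localized momentum $2\Im\int\nabla\phi_R\cdot\nabla u\,\bar u\,dx$. Integrating $\frac{d}{dt}V_R\leq-c$ once over $[0,\eta_0R]$ gives a drop of size $c\eta_0R$. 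The a priori bound is $|V_R|\leq CR\sqrt{M(u_0)L_\ast}$. Both quantities are linear in $R$, so a contradiction requires $c\eta_0>2C\sqrt{M(u_0)L_\ast}$, i.e. $\eta$ large. That is exactly what the tail control forbids, so nothing is contradicted.

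\textbf{The fix.} Du--Wu--Zhang integrate twice and use positivity of the localized variance $I_R(t):=\int R^2\phi(x/R)|u(t,x)|^2dx\geq0$. Here $\phi\geq0$ is radial, with $\phi(r)=r^2$ for $r\leq1$ and $\phi''\leq2$, so that $\nabla^2\phi_R\leq 2$ and the gradient error has a good sign even for nonradial $u$. They also use that, for the fixed datum $u_0\in H^1$, dominated convergence gives
\begin{align*}
I_R(0)\lesssim\int\min\{|x|^2,R^2\}|u_0|^2dx=o(R^2),\qquad |I_R'(0)|\lesssim\int\min\{|x|,R\}|\nabla u_0||u_0|dx=o(R)
\end{align*}
as $R\rightarrow\infty$. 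After the same tail control, the uniform bound $K\leq-c$ from Step 1 gives $I_R''\leq-c$ on $[0,\eta_0R]$. Hence
\begin{align*}
0\leq I_R(\eta_0R)\leq I_R(0)+\eta_0R\,I_R'(0)-\frac{c}{2}\eta_0^2R^2=R^2\Bigl(o(1)-\frac{c}{2}\eta_0^2\Bigr),
\end{align*}
which is negative for $R$ large with $\eta_0$ fixed. The crude bound $|V_R|\lesssim R\sqrt{M(u_0)L_\ast}$ discards precisely the $o(R)$ and $o(R^2)$ smallness coming from the initial data, and that smallness is what makes the argument work.
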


\begin{remark}\label{AkaNawCon}
The conditions in Theorem \ref{Below} can be rewritten as follows:
\begin{itemize}
\item
\eqref{100} is equivalent to there exists
$
	\omega
		> 0
$
such that
$
	S_\omega(u_0)
		< S_\omega(Q_\omega).
$
\end{itemize}
Under this condition, the next relations also hold:
\begin{itemize}
\item
$
	M(u_0)^{1-s_c}L(u_0)^{s_c}
		< M(Q_1)^{1-s_c}L(Q_1)^{s_c}
$
is equivalent to
$
	K(u_0)
		\geq 0,
$
\item
$
	M(u_0)^{1-s_c}L(u_0)^{s_c}
		> M(Q_1)^{1-s_c}L(Q_1)^{s_c}
$
is equivalent to
$
	K(u_0)
		< 0.
$
\end{itemize}
Akahori--Nawa \cite{AkaNaw13} proved Theorem \ref{Below} by using the rewritten notations.
\end{remark}

Next, our interest turns to threshold or above the ground state solutions satisfying
\begin{align*}
	M(u_0)^\frac{1-s_c}{s_c}E(u_0)
		\geq M(Q_1)^\frac{1-s_c}{s_c}E(Q_1)
\end{align*}
(which is equivalent to
$
	S_\omega(u_0)
		\geq S_\omega(Q_\omega)
$
for some
$
	\omega
		> 0
$).
For example, the threshold case in \cite{CamFarRou22, DuyRou10} and slightly above the ground state case in \cite{NakSch12} were considered.
To investigate the time behavior of solutions with larger mass-energy, Duyckaerts--Roudenko \cite{DuyRou15} used the following refined Gagliardo-Nirenberg inequality:
If we assume additionally that
$
	xf
		\in L^2(\R^d)
$
holds, then the inequality \eqref{G-N inequality} is improved as
\begin{align}\label{108}
	N(f)
		\leq C_{\rm GN}M(f)^\frac{(p-1)(1-s_c)}{2}\left\{L(f) - \frac{\<ixf,\nabla f\>_{L^2}^2}{\|xf\|_{L^2}^2}\right\}^{\frac{(p-1)s_c}{2}+1}
\end{align}
for any
$
	f
		\in H^1(\R^d)
$
with
$
	xf
		\in L^2(\R^d)
$
by optimizing with
$
	e^{i\lambda |x|^2}f,
$
where
$
	\<\cdot,\cdot\>_{L^2}
$
denotes the
$
	L^2
$-scalar inner product defined as
\begin{align*}
	\<f,g\>_{L^2}
		:= \Re \int_{\R^d}f(x)\overline{g(x)}dx.
\end{align*}
Applying this inequality, Duyckaerts--Roudenko \cite{DuyRou15} showed the following theorem:

\begin{theorem}\label{Thm:DuyRou}
Let
$
	d
		\in \N,
$
$
	1+\frac{4}{d}
		< p
		< 2^\ast-1,
$
$
	u_0
		\in H^1(\R^d),
$
and
$
	xu_0
		\in L^2(\R^d).
$
Assume that
$
	u_0
$
satisfy
\begin{align} \label{103}
	M(u_0)^\frac{1-s_c}{s_c}\left\{E(u_0) - \frac{\<ixu_0,\nabla u_0\>_{L^2}^2}{2\|xu_0\|_{L^2}^2}\right\}
		\leq M(Q_1)^\frac{1-s_c}{s_c}E(Q_1).
\end{align}
\begin{itemize}
\item
(Scattering)
If
$
	M(u_0)^{1-s_c}N(u_0)^{s_c}
		< M(Q_1)^{1-s_c}N(Q_1)^{s_c}
$
and
$
	\<ixu_0,\nabla u_0\>_{L^2}
		\geq 0,
$
then the solution
$
	u
$
to \eqref{NLS} scatters in the forward time and
\begin{align*}
	\limsup_{t \rightarrow \infty}M(u(t))^{1-s_c}N(u(t))^{s_c}
		< M(Q_1)^{1-s_c}N(Q_1)^{s_c}.
\end{align*}
\item
(Blow-up)
If
$
	M(u_0)^{1-s_c}N(u_0)^{s_c}
		> M(Q_1)^{1-s_c}N(Q_1)^{s_c}
$
and
$
	\<ixu_0,\nabla u_0\>_{L^2}
		\leq 0,
$
then the solution
$
	u
$
to \eqref{NLS} blows up in the forward time.
\end{itemize}
\end{theorem}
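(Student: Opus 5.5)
We argue through the virial identity for the variance together with the refined Gagliardo--Nirenberg inequality \eqref{108}. We work throughout with the mass-critical scaling and normalize $M(u_0)=M(Q_1)$ by the scaling $u_\lambda$. The quantities in \eqref{103} and the conditions on $N$ and on the sign of $\<ixu_0,\nabla u_0\>_{L^2}$ are all scale invariant, up to the factor $\lambda^{\cdots}$, which is harmless for signs.

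Set $V(t):=\|xu(t)\|_{L^2}^2$. Then
\begin{align*}
	V'(t)=4\<ixu,\nabla u\>_{L^2}\cdot(\pm1)
	\quad\text{and}\quad
	V''(t)=8K(u(t))=16E(u_0)-\frac{4(d(p-1)-4)}{p+1}N(u(t)).
\end{align*}
The sign in $V'$ is fixed by the convention. The key observation is that
\begin{align*}
	E(u)-\frac{\<ixu,\nabla u\>_{L^2}^2}{2\|xu\|_{L^2}^2}
	=\frac12\Bigl\{L(u)-\frac{\<ixu,\nabla u\>_{L^2}^2}{\|xu\|_{L^2}^2}\Bigr\}-\frac1{p+1}N(u),
\end{align*}
and, by \eqref{108}, this is bounded below by
\begin{align*}
	f(N):=\frac12\bigl(C_{\rm GN}^{-1}M^{-\cdots}N\bigr)^{1/(\frac{(p-1)s_c}{2}+1)}-\frac{N}{p+1}.
\end{align*}
The function $f$ attains its maximum exactly at $N=N(Q_1)$, with value $E(Q_1)$, by \eqref{PI}.

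We introduce the Duyckaerts--Roudenko quantity $z(t):=V'(t)^2/V(t)$, more precisely $\<ixu,\nabla u\>^2/\|xu\|^2$ with the appropriate constant. Differentiating, one checks that
\begin{align*}
	\frac{d}{dt}\Bigl(E(u_0)-\frac{\<ixu,\nabla u\>^2}{2\|xu\|^2}\Bigr)
\end{align*}
has a sign determined by $V'$ times $\bigl(V''V-\tfrac12 (V')^2\bigr)$-type expressions. Using $V''=8K$ and the refined inequality, one shows that the modified energy $\mathcal E(t):=E-\frac{\<ixu,\nabla u\>^2}{2\|xu\|^2}$ is monotone: it is nonincreasing while $V'>0$ and $K\ge0$, and nonincreasing while $V'<0$ and $K<0$. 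Then $\mathcal E(t)\le E(Q_1)$ persists, so $f(N(u(t)))\le E(Q_1)$ for all $t$. This forces $N(u(t))$ to stay on one side of $N(Q_1)$ by continuity, provided we can exclude touching; in the equality case of \eqref{103} one needs a strictness argument via the equality case in \eqref{108}, which would force $u$ to be a modulated $Q_1$ and contradict $\<ixu_0,\nabla u_0\>\neq0$, or the sign conditions.

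In the scattering case, $N(u(t))<N(Q_1)$ gives $K(u(t))>0$ and a uniform bound on $L$, so the solution is global and $V'$ remains nonnegative, keeping the monotonicity alive. Scattering then follows by the concentration-compactness/rigidity scheme of Theorem~\ref{Below}, or more simply via the Dodson--Murphy virial/Morawetz criterion. The extra input needed there is a uniform gap $N(u(t))\le(1-\delta)N(Q_1)$ for large $t$, obtained because $\mathcal E$ strictly drops. We also need to note that $V'>0$ with $V''\ge c>0$ eventually makes $\<ixu,\nabla u\>^2/\|xu\|^2$ converge so that $E-\lim\mathcal E>0$; this supplies the $\limsup$ statement.

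In the blow-up case, $N(u(t))>N(Q_1)$ persists and gives $V''\le -c<0$, with $V'(0)\le0$. Hence $V$ becomes negative in finite time unless the solution blows up first, which yields finite-time blow-up.

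The main obstacle is the monotonicity computation for $\mathcal E(t)$, which requires $V V''\le \frac12 (V')^2$-type control, together with the treatment of the equality case in \eqref{103}. I would first attempt the monotonicity by writing $\frac{d}{dt}\mathcal E=-\frac{V'}{8V^2}\bigl(V V''-\tfrac12(V')^2\bigr)$ up to constants, and then bounding $V V''-\frac12 (V')^2$ via \eqref{108} applied to $e^{i\lambda|x|^2}u$.
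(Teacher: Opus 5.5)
\textbf{The blow-up step fails.} Your modified-energy monotonicity is the right mechanism, and it is the paper's mechanism in $ME$/$MN$ language. With $z=\sqrt V$ one has $VV''-\frac12(V')^2=2z^3z''=4V\,K(e^{-\lambda_{\min}(t)i|x|^2}u(t))$, where $\lambda_{\min}(t)=\frac{\<ixu,\nabla u\>_{L^2}}{2\|xu\|_{L^2}^2}$. So your formula for $\frac{d}{dt}\mathcal E$ is the paper's identity $S_\omega(e^{-\lambda_{\min}(t)i|x|^2}u(t))=S_\omega(u_0)-\frac18\{z'(t)\}^2$. The scattering-side bootstrap also works as you describe.

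The problem is the claim that $N(u(t))>N(Q_1)$ gives $V''\le -c<0$. With $M(u_0)=M(Q_1)$ it only gives $V''<16\{E(u_0)-E(Q_1)\}$. Beyond the threshold, $E(u_0)$ may exceed $E(Q_1)$ by any amount. Take $u_0=e^{i\lambda|x|^2}(1+\e)Q_1$ with $\lambda<0$ (the data in the remark after Theorem \ref{S vs B}). It satisfies every blow-up hypothesis, yet $V''(0)=4K(u_0)=4K((1+\e)Q_1)+32\lambda^2\|x(1+\e)Q_1\|_{L^2}^2\to+\infty$ as $\lambda\to-\infty$. So $V$ is not concave; only $z=\sqrt V$ is.

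The same defect breaks your bootstrap. Persistence of $V'<0$, which you need for $\mathcal E$ to keep decreasing, cannot come from $V''$. The fix is the paper's argument. Show $z''<0$, i.e. $K(e^{-\lambda_{\min}(t)i|x|^2}u(t))<0$, on $[0,T_{\max})$. Then $z'(0)\le 0$ gives $z'<0$, hence $V'<0$ and the monotonicity of $\mathcal E$. It also gives $z'(t)\le z'(1)<0$ for $t\ge1$, so $z$ would become negative if $T_{\max}=\infty$.

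\textbf{Further points.}

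\emph{Source of the negative sign.} \eqref{108} only bounds $K(e^{-\lambda_{\min}}u)$ from below. That suffices for scattering, but it cannot give the negativity needed for blow-up. Negativity comes from the energy side:
$K(e^{-\lambda_{\min}(t)i|x|^2}u(t))=4\mathcal E(t)-\frac{d(p-1)-4}{p+1}N(u(t))<4E(Q_1)-\frac{d(p-1)-4}{p+1}N(Q_1)=0$
whenever $\mathcal E(t)\le E(Q_1)$ and $N(u(t))>N(Q_1)$. Strict decrease of $\mathcal E$ for $t>0$ then closes the bootstrap. It also handles the equality case in \eqref{103}, with no rigidity statement for \eqref{108} needed.

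\emph{Scattering criterion.} Your uniform gap $N(u(t))\le(1-\delta)N(Q_1)$ for $t\ge t_0>0$ already gives the $\limsup$ claim. The remark about convergence of $\<ixu,\nabla u\>_{L^2}^2/\|xu\|_{L^2}^2$ is unnecessary. However, the step from the gap to scattering cannot be quoted from Theorem \ref{Below}, whose hypothesis $ME<ME(Q)$ fails here. It is a separate criterion: Duyckaerts--Roudenko's, or the paper's Theorem \ref{S criterion} in $T_\omega$ form. That criterion is proved by rerunning concentration compactness with the gap carried along the flow, which the paper does via the two-parameter threshold $S_\omega^c(A,B)$.

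\emph{Normalization.} With the paper's $K$, $V''=4K(u)$, not $8K(u)$.
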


\begin{remark}
There exists an initial data sequence
$
	\{u_{0,n}\}
$
such that all conditions for scattering in Theorem \ref{Thm:DuyRou} and
$
	M(u_{0,n})^\frac{1-s_c}{s_c}E(u_{0,n})
		\longrightarrow \infty
$
as
$
	n
		\rightarrow \infty.
$
For the blow-up case, we can take a similar sequence.
That is, Theorem \ref{Thm:DuyRou} includes solutions with sufficiently large mass-energy compared with \cite{NakSch12}.
On the other hand, the results can only deal with time behaviors in one time direction.
\end{remark}

Noting that the optimizer to \eqref{108} is
$
	e^{i\lambda |x|^2}Q_\omega
$
$
	(\lambda \in \R),
$
Theorem \ref{Thm:DuyRou} can be rewritten as follows:
\begin{theorem}\label{S vs B}
Let
$
	d
		\in \N,
$
$
	1+\frac{4}{d}
		< p
		< 2^\ast-1,
$
$
	u_0
		\in H^1(\R^d),
$
and
$
	xu_0
		\in L^2(\R^d).
$
Assume that
\begin{align}\label{101}
	\text{there exist }
	(\omega, \lambda)
		\in (0,\infty) \times \R
	\text{ such that }
	S_\omega(e^{-\lambda i|x|^2}u_0)
		\leq S_\omega(Q_\omega).
\end{align}
\begin{itemize}
\item
(Scattering)
If
$
	K(e^{-\lambda i|x|^2}u_0)
		> 0
$
and
$
	\<ixu_0,\nabla u_0\>_{L^2}
		\geq 0,
$
then the solution
$
	u
$
to \eqref{NLS} scatters in the forward time.
\item
(Blow-up)
If
$
	K(e^{-\lambda i|x|^2}u_0)
		< 0
$
and
$
	\<ixu_0,\nabla u_0\>_{L^2}
		\leq 0,
$
then the solution
$
	u
$
to \eqref{NLS} blows up in the forward time.
\end{itemize}
\end{theorem}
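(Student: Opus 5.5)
The plan is to reduce Theorem \ref{S vs B} to Theorem \ref{Thm:DuyRou}. I will show that the hypotheses stated with the pseudo-conformal phase $e^{-\lambda i|x|^2}$ and the action $S_\omega$ imply the hypotheses of Theorem \ref{Thm:DuyRou}. The sign condition on $\<ixu_0,\nabla u_0\>_{L^2}$ is the same in both statements, and $M$, $N$ are invariant under multiplication by $e^{-\lambda i|x|^2}$. So only $L$ and $K$ change.

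\textbf{Step 1 (effect of the phase).} Write $g_\lambda := e^{-\lambda i|x|^2}u_0$. Since $\nabla g_\lambda = e^{-\lambda i|x|^2}(\nabla u_0 - 2i\lambda x u_0)$, expanding gives
\begin{align*}
	L(g_\lambda) = L(u_0) - 4\lambda\<ixu_0,\nabla u_0\>_{L^2} + 4\lambda^2\|xu_0\|_{L^2}^2 .
\end{align*}
The sign of the cross term depends on the convention for $\<\cdot,\cdot\>_{L^2}$ and only affects which $\lambda$ is optimal. Minimizing in $\lambda$ gives
\begin{align*}
	\min_\lambda L(g_\lambda) = L(u_0) - \frac{\<ixu_0,\nabla u_0\>_{L^2}^2}{\|xu_0\|_{L^2}^2},
	\qquad
	\min_\lambda S_\omega(g_\lambda) = \frac{\omega}{2}M(u_0) + \widetilde E(u_0),
\end{align*}
where $\widetilde E(u_0)$ is the bracket in \eqref{103}. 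Hence \eqref{101} yields $\frac{\omega}{2}M(u_0)+\widetilde E(u_0) \le S_\omega(Q_\omega)$ for some $\omega>0$.

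\textbf{Step 2 (\eqref{101} implies \eqref{103}).} By the scaling $Q_\omega = \omega^{1/(p-1)}Q_1(\omega^{1/2}\cdot)$ and \eqref{PI}, the quantities $M(Q_\omega)$, $E(Q_\omega)$, $N(Q_\omega)$ are explicit powers of $\omega$ times their values at $\omega=1$. Then, exactly as in the equivalence recalled in Remark \ref{AkaNawCon}, the inequality $\frac{\omega}{2}M+\widetilde E\le S_\omega(Q_\omega)$ for some $\omega$ is equivalent to $M^{\frac{1-s_c}{s_c}}\widetilde E\le M(Q_1)^{\frac{1-s_c}{s_c}}E(Q_1)$. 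This is the one-variable optimization in $\omega$ of $S_\omega(Q_\omega)-\frac{\omega}{2}M$.

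\textbf{Step 3 (sign of $K$ versus the size of $N$).} The key identity is
\begin{align*}
	S_\omega(f) - \frac14 K(f) = \frac{\omega}{2}M(f) + \frac{d(p-1)-4}{4(p+1)}N(f),
\end{align*}
and the coefficient of $N$ is positive since $p>1+\frac4d$.

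For scattering, suppose $K(g_\lambda)>0$ but $M(u_0)^{1-s_c}N(u_0)^{s_c}\ge M(Q_1)^{1-s_c}N(Q_1)^{s_c}$. I minimize $\frac{\omega}{2}M+cN$ over pairs $(M,N)$ with $M^{1-s_c}N^{s_c}$ at least this constant. By the scaling relations and \eqref{PI}, the minimum equals $S_\omega(Q_\omega)$ and is attained at $(M(Q_\omega),N(Q_\omega))$. This is where one checks that the Lagrange condition coincides with the Pohozaev identity. It then follows that $S_\omega(g_\lambda) > S_\omega(g_\lambda)-\frac14K(g_\lambda)\ge S_\omega(Q_\omega)$, contradicting \eqref{101}.

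For blow-up, suppose $K(g_\lambda)<0$. I use \eqref{G-N inequality} applied to $g_\lambda$: $K<0$ gives $L(g_\lambda)<\frac{d(p-1)}{2(p+1)}N(u_0)$, and together with Gagliardo--Nirenberg this forces $M^{1-s_c}L(g_\lambda)^{s_c}$ to exceed the value at $Q_1$. I then combine this with $S_\omega(g_\lambda)\le S_\omega(Q_\omega)$ to transfer the lower bound to $N$, as in the standard proof of Remark \ref{AkaNawCon}. With both hypotheses verified, Theorem \ref{Thm:DuyRou} gives the conclusion.

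\textbf{Main obstacle.} The hard part is the boundary case in the blow-up direction, where \eqref{101} holds with equality. There I must exclude $M^{1-s_c}N^{s_c}$ being exactly equal to the value at $Q_1$. I would handle this with the strictness of $K<0$: equality in all the inequalities would force $g_\lambda$ to be a Gagliardo--Nirenberg optimizer, hence $K(g_\lambda)=0$.
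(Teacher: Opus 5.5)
Your reduction is correct and is essentially the paper's own justification in Section~\ref{Sec: Equivalence of conditions} (Propositions~\ref{Equivalent conditions}, \ref{Prop: K, MN, and T} and Remark~\ref{Rem: K, MN, and T}), where \eqref{101} is turned into \eqref{103} by minimizing in $\lambda$, and the sign of $K(e^{-\lambda i|x|^2}u_0)$ is matched with the $MN$ condition via $T_\omega$ and Gagliardo--Nirenberg; the paper additionally gives a self-contained proof (a virial argument for $z=\sqrt{V}$ plus the $T_\omega$ scattering criterion), aimed at equations without scale invariance. Your ``main obstacle'' is not actually one, and the detour through \eqref{101} in the blow-up case is unnecessary: $K(e^{-\lambda i|x|^2}u_0)<0$ already gives $L(e^{-\lambda i|x|^2}u_0)<\frac{d(p-1)}{2(p+1)}N(u_0)$ strictly, and inserting this into \eqref{G-N inequality} yields the strict bound $M(u_0)^{1-s_c}N(u_0)^{s_c}>M(Q_1)^{1-s_c}N(Q_1)^{s_c}$ directly, as in the paper's (1)$\Rightarrow$(2).
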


\begin{remark}
The framework in Theorem \ref{S vs B} includes an initial data sequence
$
	\{u_{0,n}\}
$
such that all conditions for scattering in the theorem and
$
	S_\omega(u_{0,n})
		\longrightarrow \infty
$
as
$
	n
		\rightarrow \infty.
$
For the blow-up case, we can take a similar sequence.
Indeed, take
$
	v_0^\pm
$
satisfying
\begin{align*}
	S_\omega(v_0^\pm)
		< S_\omega(Q_\omega)
	\ \text{ and }\ 
	\left\{ \hspace{-0.2cm}
	\begin{array}{l}
	K(v_0^+) > 0, \\
	K(v_0^-) < 0.
	\end{array}
	\right.
\end{align*}
For example,
$
	v_0^\pm
		= (1 \mp \e)Q_\omega
$
$
	(0
		< \e
		\ll 1).
$
$
	\lambda^+
$
(resp.
$
	\lambda^-
$)
denotes the positive (resp. negative) unique solution to
\begin{align*}
	S_\omega(e^{\lambda^\pm i|x|^2}v_0^\pm)
		= S_\omega(Q_\omega).
\end{align*}
Then, we set
$
	u_{0,n}^+
		:= e^{\lambda i|x|^2}v_0^+
$
for
$
	\lambda
		= \lambda(n)
		:= n
		\geq \lambda^+
		(> 0)
$
and
$
	u_{0,n}^-
		:= e^{\lambda i|x|^2}v_0^-
$
for
$
	\lambda
		= \lambda(n)
		:= - n
		\leq \lambda^-
		(< 0),
$
which are the desired sequences.
\end{remark}

Actually, Theorem \ref{S vs B} is true and it follows from the fact that each corresponding conditions are equivalent (see Propositions \ref{Equivalent conditions}, \ref{Prop: K, MN, and T} and Remark \ref{Rem: K, MN, and T} below).
Regarding as a below the ground state setting, the equivalences are expected naturally.
Duyckaerts--Roudenko \cite{DuyRou15} used a scattering criterion
\begin{align*}
	\limsup_{t \rightarrow \infty}M(u(t))^{1-s_c}N(u(t))^{s_c}
		< M(Q)^{1-s_c}N(Q)^{s_c}
	\ \Longrightarrow\ 
	u\text{ scatters in the forward time}
\end{align*}
to prove the scattering result.
On the other hand, the condition
$
	K(e^{-\lambda i|x|^2}u(t))
		> 0
$
may be incompatible with a linear profile decomposition (Lemma \ref{LPD}).
Then, we need to rewrite the scattering criterion additionally and achieve it by using a functional
$
	T_\omega(f)
		:= \frac{\omega}{2}M(f) + \frac{s_c(p-1)}{2(p+1)}N(f)
$
(see Theorem \ref{S criterion}).

The sign condition of
$
	\<ixu_0,\nabla u_0\>_{L^2}
$
in Theorem \ref{S vs B} can be replaced with the sign of
$
	\lambda.
$
Namely, the following claim holds:

\begin{corollary}\label{Weaker Th}
Let
$
	d
		\in \N,
$
$
	1+\frac{4}{d}
		< p
		< 2^\ast-1,
$
$
	u_0
		\in H^1(\R^d),
$
and
$
	xu_0
		\in L^2(\R^d).
$
Assume that \eqref{101}.
\begin{itemize}
\item
(Scattering)
If
$
	K(e^{-\lambda i|x|^2}u_0)
		> 0
$
and
$
	\lambda
		> 0,
$
then the solution
$
	u
$
to \eqref{NLS} scatters in the forward time.
\item
(Blow-up)
If
$
	K(e^{-\lambda i|x|^2}u_0)
		< 0
$
and
$
	\lambda
		< 0,
$
then the solution
$
	u
$
to \eqref{NLS} blows up in the forward time.
\end{itemize}
\end{corollary}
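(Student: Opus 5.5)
Set $v_0:=e^{-\lambda i|x|^2}u_0$, so that $u_0=e^{\lambda i|x|^2}v_0$. The plan is to deduce Corollary \ref{Weaker Th} from Theorem \ref{S vs B}. The hypotheses on $S_\omega(v_0)$ and on the sign of $K(v_0)$ are the same in both statements. So it suffices to show that the sign of $\<ixu_0,\nabla u_0\>_{L^2}$ agrees with the sign of $\lambda$, at least under these hypotheses. If that route runs into trouble, the fallback is to show the condition is attained for a different parameter $\lambda'$ of the correct sign.

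First compute how the quantities change under the phase. Since $\nabla u_0=e^{\lambda i|x|^2}(\nabla v_0+2i\lambda x v_0)$, we get
\begin{align*}
	\<ixu_0,\nabla u_0\>_{L^2}
		= \<ixv_0,\nabla v_0\>_{L^2} + 2\lambda\|xv_0\|_{L^2}^2 .
\end{align*}
We also have $L(u_0)=L(v_0)+4\lambda\<ixv_0,\nabla v_0\>_{L^2}\cdot(\pm1)+4\lambda^2\|xv_0\|_{L^2}^2$, with the sign fixed by the convention for $\<\cdot,\cdot\>_{L^2}$, while $M$ and $N$ are unchanged. Hence $\lambda\mapsto S_\omega(e^{-\lambda i|x|^2}u_0)$ is a convex quadratic polynomial in $\lambda$. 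Its minimum is attained at
\begin{align*}
	\lambda_0=\frac{\<ixu_0,\nabla u_0\>_{L^2}}{2\|xu_0\|_{L^2}^2},
\end{align*}
where $\<ixe^{-\lambda_0 i|x|^2}u_0,\nabla(e^{-\lambda_0 i|x|^2}u_0)\>_{L^2}=0$. Similarly, $K(e^{-\lambda i|x|^2}u_0)$ is a quadratic in $\lambda$ with the same vertex $\lambda_0$.

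Now treat the scattering case. Assume $\lambda>0$, $S_\omega(v_0)\le S_\omega(Q_\omega)$ and $K(v_0)>0$. If $\lambda_0\ge0$, then $\<ixu_0,\nabla u_0\>_{L^2}\ge0$ and Theorem \ref{S vs B} applies directly with the same $(\omega,\lambda)$. If $\lambda_0<0$, use convexity. The set $\{\lambda' : S_\omega(e^{-\lambda' i|x|^2}u_0)\le S_\omega(Q_\omega)\}$ is an interval containing both $\lambda$ and $\lambda_0$, so it contains $[\lambda_0,\lambda]$. On this interval the region where $K>0$ should be connected. The reason is the below-threshold dichotomy of Remark \ref{AkaNawCon}: on $\{S_\omega\le S_\omega(Q_\omega)\}$, $K$ cannot vanish except at $Q_\omega$-type equality points, which are excluded by the variational characterization \eqref{MP}. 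Hence $K(e^{-\lambda_0 i|x|^2}u_0)>0$. Theorem \ref{S vs B} applied with the parameter $\lambda_0$ would need $\<ixu_0,\nabla u_0\>_{L^2}\ge0$, which fails here. So instead I would aim to show that the case $\lambda_0<0$ cannot occur together with $\lambda>0$ under these hypotheses, or else replace the appeal to Theorem \ref{S vs B} by the equivalent Duyckaerts--Roudenko form, Theorem \ref{Thm:DuyRou}. The refined inequality \eqref{108} and condition \eqref{103} are stated at the vertex $\lambda_0$, and they are implied by \eqref{101} through Proposition \ref{Equivalent conditions}.

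The blow-up case is symmetric, with $\lambda<0$ and $K<0$, after reversing the signs above; it can also be obtained from the scattering argument by time reversal and conjugation, which send $\lambda\mapsto-\lambda$.

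The main obstacle is the sign bookkeeping. We must show that $\lambda>0$ together with $K(v_0)>0$ and $S_\omega(v_0)\le S_\omega(Q_\omega)$ forces $\<ixu_0,\nabla u_0\>_{L^2}\ge0$; equivalently, that $\<ixv_0,\nabla v_0\>_{L^2}\ge-2\lambda\|xv_0\|^2$. The first thing I would try is to argue by contradiction. If the inner product were negative, then moving the parameter from $\lambda$ toward the vertex $\lambda_0<0$ passes through $\lambda'=0$ while staying in the sublevel set and keeping $K>0$. This should give $S_\omega(u_0)\le S_\omega(Q_\omega)$ and $K(u_0)>0$, so that Theorem \ref{Below} applies via Remark \ref{AkaNawCon} and $u$ scatters anyway. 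This disposes of the bad case directly, and the same argument handles blow-up.
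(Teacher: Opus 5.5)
Your final paragraph is essentially the paper's proof. The paper splits on whether $S_\omega(u_0)<S_\omega(Q_\omega)$. If so, it uses Lemma \ref{Persistence of sign} and Theorem \ref{Below}. If not, convexity of $h(\lambda)=S_\omega(e^{-\lambda i|x|^2}u_0)$ forces $\langle ixu_0,\nabla u_0\rangle_{L^2}>0$, and Theorem \ref{S vs B} applies. You split instead on the sign of $\langle ixu_0,\nabla u_0\rangle_{L^2}$, which is the same dichotomy read contrapositively. Note that the hypotheses do not force $\langle ixu_0,\nabla u_0\rangle_{L^2}\ge 0$ in general, so disposing of that case via Theorem \ref{Below} rather than by contradiction is the right move.

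Three points need tightening.

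\begin{itemize}
\item \emph{Strict inequality.} In the bad case $0$ lies strictly between the vertex $\lambda_0<0$ and $\lambda$. Strict monotonicity of $h$ on $[\lambda_0,\infty)$ (note $u_0\neq0$ since $K(e^{-\lambda i|x|^2}u_0)\neq0$) then gives $S_\omega(u_0)<S_\omega(Q_\omega)$ with strict inequality. Theorem \ref{Below} and Remark \ref{AkaNawCon} require this; you only wrote $\le$.

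\item \emph{Sign of $K(u_0)$.} The same monotonicity gives $h<S_\omega(Q_\omega)$ on all of $[0,\lambda)$. So a zero of $\lambda'\mapsto K(e^{-\lambda' i|x|^2}u_0)$ in $[0,\lambda)$ would contradict \eqref{MP}, and this is the precise reason $K(u_0)>0$. Your stated reason, that equality points are ``excluded by the variational characterization,'' is not right in general: $Q_\omega$ itself is such a point. Equality points are excluded here only because $h(\lambda')<h(\lambda)$ for $\lambda'\in[0,\lambda)$.

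\item \emph{Time reversal.} Drop the time-reversal remark. The map $u(t)\mapsto\overline{u(-t)}$ sends $\lambda\mapsto-\lambda$ and reverses time, but it preserves the sign of $K$. It therefore cannot turn the scattering case into the blow-up case. Use the mirrored argument instead: the vertex satisfies $\lambda_0>0>\lambda$, so $S_\omega(u_0)<S_\omega(Q_\omega)$ and $K(u_0)<0$. The $xu_0\in L^2$ clause of Theorem \ref{Below} then gives blow-up rather than grow-up.
\end{itemize}
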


In this paper, we prove Theorem \ref{S vs B} by using notations
$
	S_\omega
$
and
$
	K
$
not
$
	ME
$
and
$
	MN.
$
Its aim is not just interesting in an alternative proof, but also application to case without scale invariance such as
\begin{align}\label{NLSP}
	i\partial_t u + \Delta u - Pu
		= - |u|^{p-1}u,
	\qquad (t,x) \in \R \times \R^d
\end{align}
and
\begin{align}\label{NLSdou}
	i\partial_tu + \Delta u
		= - |u|^{q-1}u - |u|^{p-1}u,
	\qquad (t,x) \in \R \times \R^d.
\end{align}
Especially, we envisage
$
	P(x)
		= \frac{\gamma}{|x|^\mu},
$
$
	P(x)
		= \gamma \delta_0(x),
$
and so on.
In the case of equations with scale invariance, there are some results, such as \cite{Ard21, CamCar21, DenLuMen23, DinKer21, GaoWan20, GaoWan21, Saa20, Saa21, SaaFen22, SaaFen24, SaaNaf24, YanLiWuCac17} that use a similar framework to Theorem \ref{Thm:DuyRou}.
We turn to \eqref{NLSP} with
$
	P(x)
		= \frac{\gamma}{|x|^\mu}.
$
For the purpose, we set up a minimization problem
\begin{align*}
	r_{\omega,P}
		:= \inf\{S_{\omega,P}(f) : f \in H_{\rad}^1(\R^d) \setminus \{0\},\ K_P(f) = 0\},
\end{align*}
where
\begin{align*}
	S_{\omega,P}(f)
		:= S_\omega(f) + \frac{1}{2}\int_{\R^d}P(x)|f(x)|^2dx
	\ \text{ and }\ 
	K_P(f)
		:= K(f) - \int_{\R^d}(x\cdot \nabla P)|f(x)|^2dx
\end{align*}
for
$
	S_\omega
$
defined in \eqref{Sw} and
$
	K
$
defined in \eqref{K}.
It is known in \cite[Theorem 1.5]{HamIkeISAAC} that there exists an optimizer
$
	Q_{\omega,P}
$
to the minimization problem
$
	r_{\omega,P}
$
with
$
	d
		\geq 2,
$
$
	1 + \frac{4}{d}
		< p
		< 2^\ast-1,
$
$
	P(x)
		= \frac{\gamma}{|x|^\mu},
$
$
	\gamma
		> 0,
$
and
$	
	0
		< \mu
		< 2.
$
Using the minimizer
$
	Q_{\omega,P},
$
the author and Ikeda \cite{HamIkeJMP, HamIkeJMAA} revealed global dynamics of below the minimizer
$
	Q_{\omega,P}.
$

\begin{theorem}
Let
$
	d
		= p
		= 3,
$
$
	P(x)
		= \frac{\gamma}{|x|^\mu},
$
$
	\gamma
		> 0,
$
$
	0
		< \mu
		< 2,
$
and let a radial data
$
	u_0
		\in H_{\rad}^1(\R^3)
$
satisfy
$
	S_{\omega,P}(u_0)
		< S_{\omega,P}(Q_{\omega,P})
$
for some
$
	\omega
		> 0.
$
Then, the following is true:
\begin{itemize}
\item
(Scattering)
If
$
	K_P(u_0)
		\geq 0,
$
then the solution
$
	u
$
to \eqref{NLSP} scatters.
\item
(Blow-up)
If
$
	K_P(u_0)
		< 0,
$
then the solution
$
	u
$
to \eqref{NLSP} blows up.
\end{itemize}
\end{theorem}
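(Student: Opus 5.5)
This theorem is the below-threshold result for \eqref{NLSP} with $P(x)=\gamma|x|^{-\mu}$, proved by the author and Ikeda \cite{HamIkeJMP, HamIkeJMAA}. The plan is to follow the Kenig--Merle / Akahori--Nawa scheme of Theorem \ref{Below}, replacing $Q_\omega$ by the radial minimizer $Q_{\omega,P}$ of $r_{\omega,P}$ from \cite[Theorem 1.5]{HamIkeISAAC}.

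\textbf{Step 1: invariance of the two regions.} Set
\begin{align*}
\mathcal{K}^\pm := \{f\in H^1_{\rad}: S_{\omega,P}(f)<r_{\omega,P},\ \pm K_P(f)\ge 0 \ (\text{resp.} <0)\}.
\end{align*}
By conservation of mass and of the energy $E_P(f)=E(f)+\tfrac12\int P|f|^2$, the quantity $S_{\omega,P}(u(t))$ is constant in time. By the variational characterization, $K_P(u(t))$ cannot vanish while $u(t)\neq 0$, so continuity keeps each of $\mathcal{K}^\pm$ invariant under the flow.

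\textbf{Step 2: quantitative bounds.} Because $x\cdot\nabla P=-\mu P$, we have $K_P(f)=2L(f)+\mu\int P|f|^2-\frac{3(p-1)}{p+1}N(f)$. I will use the functional
\begin{align*}
J(f) := S_{\omega,P}(f)-\tfrac{1}{3(p-1)/2}K_P(f),
\end{align*}
which is a positive combination of $M$, $L$ and $\int P|f|^2$, since $\mu<2$ and $3(p-1)/2=3>2$. Rewriting $r_{\omega,P}$ as an infimum of $J$ over $\{K_P\le 0\}$ yields the following.
\begin{itemize}
\item On $\mathcal{K}^+$, $\|u(t)\|_{H^1}$ is uniformly bounded, so $u$ is global. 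Moreover $K_P(u)\gtrsim \min\{L(u),\,r_{\omega,P}-S_{\omega,P}(u)\}$.
\item On $\mathcal{K}^-$, $K_P(u(t))\le -\delta<0$ uniformly in $t$. This comes from comparing with the scaling curve $\lambda\mapsto e^{3\lambda/2}f(e^\lambda\cdot)$, which ends at $K_P=0$.
\end{itemize}

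\textbf{Step 3: blow-up.} For radial data I will use the localized virial identity of Ogawa--Tsutsumi with a radial cutoff $\phi_R$:
\begin{align*}
\frac{d^2}{dt^2}\int\phi_R|u|^2 = 4K_P(u)+\text{error}.
\end{align*}
The error is controlled by the radial Strauss inequality and by the mass outside $|x|\ge R$. For $d=p=3$ the nonlinear error is $\lesssim R^{-2}\|u\|_{L^2}^{3}\|\nabla u\|_{L^2}$, which can be absorbed using the uniform bound $K_P\le-\delta$ together with $K_P\lesssim -L$. This forces finite-time blow-up in both time directions.

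\textbf{Step 4: scattering by concentration-compactness.} I will run the argument in the variable $S_{\omega,P}$.
\begin{itemize}
\item Small-data scattering and long-time perturbation follow from Strichartz estimates for $e^{-it(-\Delta+P)}$. These are available since $-\Delta+\gamma|x|^{-\mu}$ with $\gamma>0$ is a positive operator with no eigenvalues; equivalence of Sobolev norms also holds.
\item A radial linear profile decomposition (Lemma \ref{LPD}) adapted to $-\Delta+P$ is needed. Profiles with time shifts $t_n\to\pm\infty$ or with escaping scales reduce to the free NLS, and their nonlinear profiles exist via the wave operators. Radiality removes spatial translations.
\item Assuming the threshold level is not $r_{\omega,P}$, we extract a minimal critical element whose orbit is precompact in $H^1$. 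Coercivity $K_P\ge c>0$ along the orbit combined with a localized virial then gives a contradiction.
\end{itemize}

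\textbf{Main obstacle.} The hardest point is the profile decomposition and the construction of the critical element, since the potential breaks scale invariance. Profiles living at scales $\to 0$ or $\to\infty$ see the potential as negligible, and they must be compared with free-NLS nonlinear profiles lying below the free ground state $S_\omega(Q_\omega)$. I would first prove the comparison $r_{\omega,P}\le n_\omega$, which holds because the potential is repulsive and the dilation families approach $n_\omega$. With this, such profiles fall under Theorem \ref{Below} and scatter.
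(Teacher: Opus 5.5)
\textbf{Gap: the step you identify as the main obstacle is misdiagnosed, and the inequality you propose to resolve it is false.} Your Steps 1--4 follow the standard Kenig--Merle/Akahori--Nawa scheme. That is the right architecture; the paper itself only cites \cite{HamIkeJMP, HamIkeJMAA} for this theorem.

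Repulsivity gives the opposite comparison to $r_{\omega,P}\le n_\omega$. If $f\in H^1_{\rad}\setminus\{0\}$ and $K_P(f)=0$, then $K(f)=-\mu\int P|f|^2dx<0$. So by \eqref{136},
\begin{align*}
	S_{\omega,P}(f)
		= T_\omega(f) + \frac{2-\mu}{4}\int_{\R^3}P|f|^2dx
		> T_\omega(f)
		\geq S_\omega(Q_\omega)
		= n_\omega .
\end{align*}
Since $r_{\omega,P}$ is attained, $r_{\omega,P}>n_\omega$, which is what the paper records ($n_\omega=n_{\omega,P}<r_{\omega,P}$). The level $n_\omega$ is approached only by translating $Q_\omega$ to spatial infinity, which radial functions cannot do. No radial family on $\{K_P=0\}$, dilated or otherwise, approaches it. So if your argument needed some profiles to lie below $S_\omega(Q_\omega)$ in order to invoke Theorem \ref{Below}, it would fail exactly for data with $n_\omega\le S_{\omega,P}(u_0)<r_{\omega,P}$. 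That range is precisely what the radial theorem adds over the nonradial one.

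\textbf{What you are missing is that this obstacle does not arise.} For $H^1$-bounded sequences and an $L^2$-supercritical, $H^1$-subcritical power, the profile decomposition has no scaling parameters; Lemma \ref{LPD} has only $(t_n^j,x_n^j)$. Radiality then forces $x_n^j\equiv0$. Consequently every nonlinear profile is a solution of \eqref{NLSv} itself. For $t_n^j\to\pm\infty$ you solve the final-state problem for \eqref{NLSv} relative to $e^{it\Delta_P}$, not for the free equation. Decoupling of $M$, $N$ and $L_{E_P}$ (the last is conserved by $e^{it\Delta_P}$), together with the characterization of $r_{\omega,P}$ through $S_{\omega,P}-\frac16K_P$ on $\{K_P\le0\}$, places every profile below the critical level with $K_P>0$. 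Minimality then applies directly at threshold $r_{\omega,P}$, and no comparison with free NLS is needed.

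Routing through the free flow would in fact be problematic. For $0<\mu\le1$ the potential is long range, so ordinary linear wave operators between $e^{it\Delta_P}$ and $e^{it\Delta}$ do not exist. Likewise, Strichartz estimates for $e^{it\Delta_P}$ are a genuine input from the literature on slowly decaying repulsive potentials. They do not follow from positivity and absence of eigenvalues.

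Two smaller slips:
\begin{itemize}
\item The functional that kills $N$ is $S_{\omega,P}-\frac16K_P=\frac{\omega}{2}M+\frac16L+\frac{3-\mu}{6}\int P|f|^2dx$. Your $S_{\omega,P}-\frac13K_P$ has a negative coefficient on $L$.
\item In Step 3, the bound to combine with $K_P\le-\delta$ is $K_P(u)\le 6E_P(u_0)-L(u)$, not ``$K_P\lesssim-L$''.
\end{itemize}
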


In this paper, we obtain the following theorem by using the proof of Theorem \ref{S vs B}:

\begin{theorem}\label{S vs B P}
Let
$
	d
		= p
		= 3,
$
$
	P(x)
		= \frac{\gamma}{|x|^\mu},
$
$
	\gamma
		> 0,
$
$
	0
		< \mu
		< 2,
$
and let a radial data
$
	u_0
		\in H_{\rad}^1(\R^3)
$
satisfy
$
	xu_0
		\in L^2(\R^3).
$
Assume that there exist
$
	(\omega,\lambda)
		\in (0,\infty) \times \R
$
such that
\begin{align*}
	S_{\omega,P}(e^{-\lambda i|x|^2}u_0)
		\leq S_{\omega,P}(Q_{\omega,P}).
\end{align*}
\begin{itemize}
\item
(Scattering)
If
$
	K_P(e^{-\lambda i|x|^2}u_0)
		> 0
$
and
$
	\<ixu_0,\nabla u_0\>_{L^2}
		\geq 0,
$
then the solution
$
	u
$
to \eqref{NLSP} scatters in the forward time.
\item
(Blow-up)
If
$
	K_P(e^{-\lambda i|x|^2}u_0)
		< 0
$
and
$
	\<ixu_0,\nabla u_0\>_{L^2}
		\leq 0,
$
then the solution
$
	u
$
to \eqref{NLSP} blows up in the forward time.
\end{itemize}
\end{theorem}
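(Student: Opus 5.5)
The plan is to transfer the argument for Theorem \ref{S vs B} to \eqref{NLSP}, using the functionals $S_{\omega,P}$, $K_P$ and the minimizer $Q_{\omega,P}$, and to exploit the pseudo-conformal structure through the variance. Set
\[
V(t):=\|xu(t)\|_{L^2}^2,\qquad V'(t)=4\<ixu,\nabla u\>_{L^2},\qquad V''(t)=4K_P(u(t)),
\]
where the last identity is the virial identity for \eqref{NLSP}. For $P=\gamma|x|^{-\mu}$ the extra term is $\int (x\cdot\nabla P)|u|^2=-\mu\int P|u|^2$, so $K_P=K+\mu\int P|u|^2$. The basic algebraic identity is
\[
S_{\omega,P}(e^{-\lambda i|x|^2}f)=S_{\omega,P}(f)-2\lambda\<ixf,\nabla f\>_{L^2}+2\lambda^2\|xf\|_{L^2}^2 .
\]
Minimizing in $\lambda$ gives $\lambda_\ast=\<ixf,\nabla f\>/(2\|xf\|^2)$ and
\[
\min_\lambda S_{\omega,P}(e^{-\lambda i|x|^2}f)=S_{\omega,P}(f)-\frac{\<ixf,\nabla f\>^2}{2\|xf\|^2}.
\]
This is the analogue of \eqref{103}. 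Similarly,
\[
K_P(e^{-\lambda i|x|^2}f)=K_P(f)-8\lambda\<ixf,\nabla f\>+8\lambda^2\|xf\|^2 .
\]
This holds because $K$ contains $2L$ and the potential term does not see the phase. First I would prove the analogue of Propositions \ref{Equivalent conditions} and \ref{Prop: K, MN, and T}. Under $S_{\omega,P}(e^{-\lambda i|x|^2}u_0)\le r_{\omega,P}$, the sign of $K_P(e^{-\lambda i |x|^2}u_0)$ does not depend on the admissible $\lambda$. It is also equivalent to the sign of $T_{\omega,P}(e^{-\lambda i|x|^2}u_0)-T_{\omega,P}(Q_{\omega,P})$, where $T_{\omega,P}:=S_{\omega,P}-\frac{s_c}{2}\cdot\frac12 K_P$ is the combination with no $L$ (rewritten as needed to contain $M$, $N$, $\int P|f|^2$). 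This uses the variational characterization $r_{\omega,P}=\inf\{T_{\omega,P}(f): K_P(f)\le0\}$, which holds because $\mu<2$ keeps the potential coefficient in $T_{\omega,P}$ nonnegative. A radial $f$ with $K_P(f)=0$ has $S_{\omega,P}(f)\ge r_{\omega,P}$, with equality excluded off the orbit.

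Second, I would propagate these conditions in time. Define $G(t):=\min_\lambda S_{\omega,P}(e^{-\lambda i|x|^2}u(t))=S_{\omega,P}(u_0)-V'(t)^2/(8V(t))$, using conservation of $S_{\omega,P}$. Then
\[
\frac{d}{dt}\frac{V'^2}{8V}=\frac{V'}{8V^2}\bigl(2VV''-V'^2\bigr).
\]
From the $K_P$ formula at $\lambda_\ast(t)$ we get $2VV''-V'^2=8V\,K_P(e^{-\lambda_\ast i|x|^2}u)$. So in the scattering case, as long as $V'\ge0$ and $K_P(e^{-\lambda_\ast i |x|^2}u)>0$, the function $G$ is nonincreasing and stays $\le r_{\omega,P}$. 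By the continuity argument and the first step, $K_P(e^{-\lambda_\ast(t)i|x|^2}u(t))>0$ for all $t\ge0$, hence $V''\ge$ something and $V'$ stays nonnegative. The blow-up case is symmetric: $V'\le0$ and $K_P<0$ persist, so $V''=4K_P(u)\le 4K_P(e^{-\lambda_\ast i|x|^2}u)+\text{(nonpositive)}$. Quantitatively, $K_P(e^{-\lambda i|x|^2}u)\le -c<0$ uniformly, via the $T_{\omega,P}$ gap as in the below-threshold theory. Hence $V''\le -c$, and the standard Glassey argument gives forward blow-up.

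Third, for scattering I would use a scattering criterion in the spirit of Theorem \ref{S criterion}, adapted to \eqref{NLSP} with radial data and $d=p=3$. If $\limsup_{t\to\infty}T_{\omega,P}(e^{-\lambda(t)i|x|^2}u(t))<T_{\omega,P}(Q_{\omega,P})$ for suitable $\lambda(t)$, then $u$ scatters forward. Since $T_{\omega,P}$ involves only $M$, $N$ and $\int P|u|^2$, it is phase-independent, so this reduces to $\limsup_t T_{\omega,P}(u(t))<T_{\omega,P}(Q_{\omega,P})$. I would prove this by concentration compactness and rigidity, as in \cite{HamIkeJMP, HamIkeJMAA}. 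The linear profile decomposition is for the operator $-\Delta+P$ in the radial setting, and a minimal non-scattering solution with precompact orbit is excluded by a localized virial estimate. The strict gap itself must come out of the second step: I need uniform-in-time positivity of the $T$-gap rather than just strict inequality at each time. For this I would use the monotonicity of $G$ together with $V'(t)\to$ growth, or argue by contradiction with a sequence $t_n$ approaching the threshold and the compactness of minimizing sequences for $r_{\omega,P}$.

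I expect the main obstacle to be this last point. One difficulty is obtaining a uniform gap when $S_{\omega,P}(e^{-\lambda i|x|^2}u_0)=r_{\omega,P}$ is allowed with equality. The other is checking that the criterion phrased via $T_{\omega,P}$ is compatible with the profile decomposition for $-\Delta+\gamma|x|^{-\mu}$, which lacks scaling. The latter also forces the choice to work with $T_{\omega,P}$ instead of $K_P$.
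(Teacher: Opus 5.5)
\textbf{Gap in the blow-up step.} As written, the blow-up half does not close. The inequality $V''=4K_P(u)\le 4K_P(e^{-\lambda_\ast i|x|^2}u)+\text{(nonpositive)}$ has the wrong sign. Evaluating your own formula for $K_P(e^{-\lambda i|x|^2}f)$ at $\lambda=\lambda_\ast$ gives
\begin{align*}
	V''(t)=4K_P(u(t))=4K_P(e^{-\lambda_\ast(t) i|x|^2}u(t))+\frac{V'(t)^2}{2V(t)},
\end{align*}
so the correction term is nonnegative. A bound $K_P(e^{-\lambda_\ast i|x|^2}u)\le -c$ therefore says nothing about the sign of $V''$.

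Above the threshold $V''$ really can be positive. Take $v_0=(1+\varepsilon)Q_{\omega,P}$ with $0<\varepsilon\ll1$, and $u_0=e^{\lambda i|x|^2}v_0$ with $\lambda<0$. All blow-up hypotheses hold with this $\lambda$: $e^{-\lambda i|x|^2}u_0=v_0$ has $S_{\omega,P}(v_0)<r_{\omega,P}$ and $K_P(v_0)<0$, and $\langle ixu_0,\nabla u_0\rangle_{L^2}=2\lambda\|xv_0\|_{L^2}^2<0$. Yet $K_P(u_0)=K_P(v_0)+8\lambda^2\|xv_0\|_{L^2}^2>0$ once $|\lambda|$ is large. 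So no Glassey argument on $V$ can work; this is exactly where the problem differs from the below-threshold one.

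The fix, which is the paper's argument, is to work with $z=\sqrt V$. Your identity $2VV''-V'^2=8VK_P(e^{-\lambda_\ast i|x|^2}u)$ is precisely $z''=2K_P(e^{-\lambda_\ast(t)i|x|^2}u(t))/z$. Persistence of $V'\le 0$ must also be argued through $z'=V'/(2\sqrt V)$, which decreases while $K_P(e^{-\lambda_\ast i|x|^2}u)<0$, not through $V''$. Then $z''<0$ on $[0,T_{\max})$ and $z'(0)\le 0$ give $z'(t)\le z'(1)<0$ for $t\ge 1$, which contradicts $z\ge 0$ if $T_{\max}=\infty$. No uniform bound $K_P\le -c$ is needed.

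\textbf{Scattering half.} This follows the paper's route: your $G(t)$ is the paper's $S_{\omega,P}(e^{-\lambda_{\min}(t)i|x|^2}u(t))=S_{\omega,P}(u_0)-\frac18 z'(t)^2$, followed by radial concentration compactness and a localized-virial rigidity argument. There is one real difference, and you should keep it.

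You phrase the criterion with $T_{\omega,P}=S_{\omega,P}-\frac14K_P=T_\omega+\frac{2-\mu}{4}\int P|f|^2$. The paper instead drops the potential term and states the criterion and Lemma \ref{Coercivity P} with $T_\omega$ from \eqref{142}. That term cannot be dropped. The function $f=Q_{\omega,P}$ has $K_P(f)=0$ but $T_\omega(f)=S_{\omega,P}(Q_{\omega,P})-\frac{2-\mu}{4}\int P|Q_{\omega,P}|^2<S_{\omega,P}(Q_{\omega,P})$, which contradicts the $T_\omega$-coercivity. Likewise, the radial standing wave $e^{i\omega t}Q_{\omega,P}$ is a non-scattering solution satisfying the paper's $T_\omega$-hypothesis.

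With $T_{\omega,P}$, the characterization $r_{\omega,P}=\inf\{T_{\omega,P}(f): f\in H^1_{\rad}\setminus\{0\},\ K_P(f)\le 0\}$ does hold by the $L^2$-scaling argument; this is where $\mu<2$ enters. Nothing is lost in the application, since $T_{\omega,P}(u(t))=G(t)-\frac14K_P(e^{-\lambda_\ast(t)i|x|^2}u(t))<G(t)$.

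Your worry about a uniform gap in the equality case is settled as in Remark \ref{Uniformly bounded}. On $(0,\infty)$ we have $V'>0$ and $K_P(e^{-\lambda_\ast i|x|^2}u)>0$, so $G$ is strictly decreasing there. Hence $\sup_{t\ge t_0}T_{\omega,P}(u(t))\le G(t_0)<G(0)\le r_{\omega,P}$ for every $t_0>0$, and you apply the criterion to $u(\cdot+t_0)$. Together with energy conservation, this same bound gives the uniform $H^1$ bound and hence global existence.

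Two minor slips affect no sign: $\min_\lambda S_{\omega,P}(e^{-\lambda i|x|^2}u)=S_{\omega,P}(u)-V'^2/(32V)$, and the $L$-free combination is $S_{\omega,P}-\frac14K_P$, not $S_{\omega,P}-\frac{s_c}{4}K_P$.
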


\begin{remark}
We note some remarks for the theorem.
\begin{itemize}
\item
(Restriction of
$
	d
$
and
$
	p
$)
We think that
$
	d
$
and
$
	p
$
can be extended to some range.
For the global existence and blow-up results, we can get them under
$
	d
		\in \N,
$
$
	1+\frac{4}{d}
		< p
		< 2^\ast-1,
$
and
$
	0
		< \mu
		< \min\{2,d\}
$
in the same manner as Theorem \ref{S vs B P}.
However, it seems difficult for scattering result that we relax it to full intercritical range
($
	1+\frac{4}{d}
		< p
		< 2^\ast-1
$)
in the absence of a non-admissible Strichartz estimate.
By the reason, we do not aim to generalize
$
	d
$
and
$
	p
$
here.
\item
(Radial assumption)
To analyze non-radial solutions, if we consider a minimization problem
\begin{align*}
	n_{\omega,P}
		:= \inf\{S_{\omega,P}(f) : f \in H^1(\R^d) \setminus \{0\},\ K_P(f) = 0\},
\end{align*}
then
$
	n_{\omega,P}
		= n_\omega
$
holds (see \cite[Theorem 1.5]{HamIkeISAAC}), where
$
	n_\omega
$
is defined in \eqref{nw}.
Therefore, its framework can be made use of
$
	ME
$
and
$
	MN
$
formulations in Theorem \ref{Thm:DuyRou}.
Indeed, the result is seen in \cite{Wan23} (see also \cite{ArdHaj23, DenLuMen231, Din22, Din24, DinForHaj22, GaoWan19, JiLuMen25, YuaFen20, YueLiZha16} for this direction works).
\end{itemize}
\end{remark}

It is expected that the argument for \eqref{NLSP} with
$
	P(x)
		= \frac{\gamma}{|x|^\mu}
$
can be applied to other equations as nonlinear Schr\"odinger equation with a delta potential, that is, \eqref{NLSP} with
$
	P(x)
		= \gamma \delta_0(x),
$
nonlinear Schr\"odinger equation with a double power nonlinearity \eqref{NLSdou}, and so forth.

The rest of the paper is organized as follows:
In Section \ref{Sec: Equivalence of conditions}, we see equivalence on conditions in Theorems \ref{Thm:DuyRou} and \ref{S vs B}.
In Section \ref{Sec: Well-posedness}, we recall a well-posedness of \eqref{NLS}.
In Section \ref{Sec: G vs B}, we prove the global existence and blow-up results in Theorem \ref{S vs B}.
In Section \ref{Sec: S}, we complete the proof of Theorem \ref{S vs B} by showing the scattering criterion (Theorem \ref{S criterion}).
In Section \ref{Sec:Application}, we apply the proof in Sections \ref{Sec: G vs B} and \ref{Sec: S} to the equation with the inverse-power potential.
In Appendix \ref{Alternative}, we give an alternative proof of coercivity for the virial functional
$
	K
$
(Lemma \ref{Coercivity}) by using the scaling property of the equation.

\section{Equivalence of conditions}\label{Sec: Equivalence of conditions}

In this section, we show the equivalence on the conditions in Theorems \ref{Thm:DuyRou} and \ref{S vs B}.
Therefore, we can conclude that Theorem \ref{S vs B} is true.

At first, we check that
$
	e^{\lambda i|x|^2}Q_\omega
$
is a optimizer to
\begin{align*}
	n_{\omega,\lambda}
		:= \inf\{S_\omega(e^{-\lambda i|x|^2}f) : f \in H^1(\R^d) \setminus \{0\},\ xf \in L^2(\R^d),\ K(e^{-\lambda i|x|^2}f) = 0\}
\end{align*}
for
$
	(\omega,\lambda)
		\in (0,\infty) \times \R.
$
We have already known that
$
	f
		= e^{\lambda i|x|^2}Q_\omega
$
attains to
\begin{align*}
	n_{\omega,\lambda}^\ast
		:= \inf\{S_\omega(e^{-\lambda i|x|^2}f) : e^{-\lambda i|x|^2}f \in H^1(\R^d) \setminus \{0\},\ K(e^{-\lambda i|x|^2}f) = 0\}
\end{align*}
and
$
	n_{\omega,\lambda}^\ast
		= S_\omega(Q_\omega)
$
for
$
	(\omega,\lambda)
		\in (0,\infty) \times \R.
$
The inclusion
\begin{align*}
	\{f : f \in H^1(\R^d) \setminus \{0\},\ xf \in L^2(\R^d)\}
		\subset \{f : e^{-\lambda i|x|^2}f \in H^1(\R^d) \setminus \{0\}\}
\end{align*}
gives
$
	n_{\omega,\lambda}^\ast
		\leq n_{\omega,\lambda}.
$
In addition,
$
	e^{\lambda i|x|^2}Q_\omega
		\in H^1(\R^d) \setminus \{0\}
$
and
$
	xe^{\lambda i|x|^2}Q_\omega
		\in L^2(\R^d)
$
implies
$
	n_{\omega,\lambda}
		\leq S_\omega(Q_\omega).
$
We note that
$
	Q_\omega
$
is exponentially decaying (see \cite[Theorem 8.1.1]{Caz03}).
Collecting these, we have
$
	n_{\omega,\lambda}
		= n_{\omega,\lambda}^\ast
		= S_\omega(Q_\omega).
$

Now, we are in a position to check the equivalence to the frameworks in Theorems \ref{Thm:DuyRou} and \ref{S vs B}.

\begin{proposition}\label{Equivalent conditions}
Let
$
	f
		\in H^1(\R^d)
$
satisfy
$
	xf
		\in L^2(\R^d).
$
The condition \eqref{103} is equivalent to the following conditions:
\begin{enumerate}
\item
There exists
$
	\lambda
		\in \R
$
such that
$
	M(f)^\frac{1-s_c}{s_c}E(e^{-\lambda i|x|^2}f)
		\leq M(Q_1)^\frac{1-s_c}{s_c}E(Q_1),	
$
\item
There exists
$
	(\omega,\lambda)
		\in (0,\infty) \times \R
$
such that
$
	S_\omega(e^{-\lambda i|x|^2}f)
		\leq S_\omega(Q_\omega),
$
\item
There exists
$
	\omega
		> 0
$
such that
$\displaystyle
	S_\omega(f) - \frac{\<ixf,\nabla f\>_{L^2}^2}{2\|xf\|_{L^2}^2}
		\leq S_\omega(Q_\omega).
$
\end{enumerate}
\end{proposition}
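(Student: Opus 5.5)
The plan is to reduce everything to two elementary facts. First, the pseudo-conformal phase $e^{-\lambda i|x|^2}$ changes only the kinetic term $L$, and it changes it by a quadratic polynomial in $\lambda$. Second, the $\omega$-dependence of $S_\omega(Q_\omega)$ is a pure power, obtained from the scaling $Q_\omega(x)=\omega^{\frac{1}{p-1}}Q_1(\sqrt{\omega}x)$. Throughout I assume $f\neq 0$ (the case $f=0$ is trivial), so $\|xf\|_{L^2}>0$.

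\emph{Step 1: the $\lambda$-dependence.} Since $\nabla(e^{-\lambda i|x|^2}f)=e^{-\lambda i|x|^2}(\nabla f-2i\lambda xf)$, we get
\begin{align*}
	L(e^{-\lambda i|x|^2}f)
		= L(f) - 4\lambda\<ixf,\nabla f\>_{L^2} + 4\lambda^2\|xf\|_{L^2}^2,
\end{align*}
up to the sign convention of the cross term, which is irrelevant here. The quantities $M$ and $N$ are unchanged by the phase. Hence $\lambda\mapsto E(e^{-\lambda i|x|^2}f)$ is an upward parabola. Its minimum value is
\begin{align*}
	E(f)-\frac{\<ixf,\nabla f\>_{L^2}^2}{2\|xf\|_{L^2}^2}=:\widetilde E(f),
\end{align*}
and it takes every value in $[\widetilde E(f),\infty)$. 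Therefore ``there exists $\lambda$ with $E(e^{-\lambda i|x|^2}f)\le c$'' is equivalent to ``$\widetilde E(f)\le c$''. This gives \eqref{103}$\Leftrightarrow$(1) directly. It also gives (2)$\Leftrightarrow$(3) for each fixed $\omega$, because $S_\omega(e^{-\lambda i|x|^2}f)=\frac{\omega}{2}M(f)+E(e^{-\lambda i|x|^2}f)$ and the $M$-term does not depend on $\lambda$.

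\emph{Step 2: (3)$\Leftrightarrow$\eqref{103}.} By scaling, $M(Q_\omega)=\omega^{-s_c}M(Q_1)$ and $E(Q_\omega)=\omega^{1-s_c}E(Q_1)$. Hence $S_\omega(Q_\omega)=\omega^{1-s_c}A$ with $A:=\frac12M(Q_1)+E(Q_1)>0$. Put $m:=M(f)>0$. Condition (3) then reads: there exists $\omega>0$ with
\begin{align*}
	\widetilde E(f)\le g(\omega):=\omega^{1-s_c}A-\frac{m}{2}\omega .
\end{align*}
Since $0<s_c<1$, $g$ is concave, tends to $0$ at $0$ and to $-\infty$ at $\infty$, and attains its maximum at $\omega_\ast=\bigl(2(1-s_c)A/m\bigr)^{1/s_c}$. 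So (3) is equivalent to $\widetilde E(f)\le g(\omega_\ast)$, and attainment of the maximum is what allows the non-strict inequality. A direct computation gives
\begin{align*}
	g(\omega_\ast)=s_c A\,\bigl(2(1-s_c)A\bigr)^{\frac{1-s_c}{s_c}}\,m^{-\frac{1-s_c}{s_c}}.
\end{align*}
Multiplying by $m^{\frac{1-s_c}{s_c}}$ shows that (3) is equivalent to $m^{\frac{1-s_c}{s_c}}\widetilde E(f)\le C$, where $C:=s_cA(2(1-s_c)A)^{\frac{1-s_c}{s_c}}$.

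\emph{Main obstacle: identifying the constant.} It remains to check $C=M(Q_1)^{\frac{1-s_c}{s_c}}E(Q_1)$. For this I will use the Pohozaev identities \eqref{PI} with $\omega=1$ to express $L(Q_1)$, $N(Q_1)$ and $E(Q_1)$ in terms of $M(Q_1)$. From $2L=\frac{d(p-1)}{p+1}N$ and $M+L=N$, and using $s_c=\frac d2-\frac2{p-1}$, I expect $E(Q_1)=\frac{s_c}{2(1-s_c)}M(Q_1)$. Then $A=\frac{1}{2(1-s_c)}M(Q_1)$, so $2(1-s_c)A=M(Q_1)$ and $s_cA=E(Q_1)$, which gives exactly the desired constant. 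This bookkeeping is the only place where care is needed; everything else is one-variable calculus. Finally, combining Steps 1 and 2 yields \eqref{103}$\Leftrightarrow$(1)$\Leftrightarrow$(3)$\Leftrightarrow$(2).
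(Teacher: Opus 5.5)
Your proof is correct, and most of it matches the paper. The $\lambda$-minimization you use for \eqref{103}$\Leftrightarrow$(1) and for (2)$\Leftrightarrow$(3) is exactly the paper's argument.

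The one difference is the link between the $ME$-type conditions and the action conditions. The paper obtains (1)$\Leftrightarrow$(2) by citing \cite[Lemma A.1]{IkeInu17}. You instead close the loop with a direct proof of (3)$\Leftrightarrow$\eqref{103}, optimizing in $\omega$ after the phase has already been optimized out. Your computation holds up:
\begin{itemize}
\item $S_\omega(Q_\omega)=\omega^{1-s_c}S_1(Q_1)$ follows from the scaling of $Q_\omega$.
\item The concave function $g$ attains its maximum at $\omega_\ast>0$, and this is what legitimately handles the non-strict inequality.
\item The Pohozaev identities \eqref{PI} do give $E(Q_1)=\frac{s_c}{2(1-s_c)}M(Q_1)$, using $s_c=\frac{d(p-1)-4}{2(p-1)}$ and $1-s_c=\frac{2(p+1)-d(p-1)}{2(p-1)}$. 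Hence $2(1-s_c)A=M(Q_1)$ and $s_cA=E(Q_1)$, which gives exactly the constant in \eqref{103}.
\end{itemize}

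Your route makes the argument self-contained and shows explicitly why the $ME$ threshold and the action threshold coincide. The paper's route is shorter but outsources precisely this one-variable calculation.
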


\begin{proof}
\eqref{103}
$
	\Longleftrightarrow
$
(1) :
Since
$
	N(e^{-\lambda i|x|^2}f)
		= N(f)
$
and
\begin{align*}
	L(e^{-\lambda i|x|^2}f)
		= L(f) + 4\lambda^2\|xf\|_{L^2}^2 - 4\lambda\<ixf,\nabla f\>_{L^2},
\end{align*}
$
	\min_{\lambda \in \R}\{M(f)^\frac{1-s_c}{s_c}E(e^{-\lambda i|x|^2}f)\}
$
has the minimum value
\begin{align*}
	M(f)^\frac{1-s_c}{s_c}\left\{E(f) - \frac{\<ixf,\nabla f\>_{L^2}^2}{2\|xf\|_{L^2}^2}\right\}
\end{align*}
at
$
	\lambda_{\min}
		= \frac{\<ixf,\nabla f\>_{L^2}}{2\|xf\|_{L^2}^2},
$
which implies the desired result.

For (1)
$
	\Longleftrightarrow
$
(2), e.g., see \cite[Lemma A.1]{IkeInu17}.

(2)
$
	\Longleftrightarrow
$
(3) follows from the same manner as \eqref{103}
$
	\Longleftrightarrow
$
(1).
\end{proof}

\begin{remark}
We notice that the condition (3) in Proposition \ref{Equivalent conditions} can be rewritten as ``There exists
$
	\omega
		> 0
$
such that
$\displaystyle
	S_\omega(e^{-\lambda_{\min} i|x|^2}f)
		\leq S_\omega(Q_\omega)"
$
by using
$
	\lambda_{\min}
		= \frac{\<ixf,\nabla f\>_{L^2}}{2\|xf\|_{L^2}^2},
$
\end{remark}

Next, we establish equivalence to
$
	MN
$
condition in Theorem \ref{Thm:DuyRou} and
$
	K
$
condition in Theorem \ref{S vs B}.
In addition, a condition denoted by
$
	T_\omega
$
is also identical to that of the other two.
Here,
$
	T_\omega
$
is defined as
\begin{align}\label{142}
	T_\omega(f)
		:= S_\omega(f) - \frac{1}{4}K(f)
		= \frac{\omega}{2}M(f) + \frac{s_c(p-1)}{2(p+1)}N(f)
\end{align}
and it has a property
\begin{align}\label{136}
	S_\omega(Q_\omega)
		= T_\omega(Q_\omega)
		= \inf\{T_\omega(f) : f \in H^1(\R^d)\setminus\{0\},\ K(f) \leq 0\}
\end{align}
(e.g., see \cite[Lemma 3.1]{HamIkeISAAC}).

\begin{proposition}\label{Prop: K, MN, and T}
Let
$
	f
		\in H^1(\R^d)
$
satisfy
$
	xf
		\in L^2(\R^d).
$
Suppose
$
	S_\omega(e^{-\lambda i|x|^2}f)
		\leq S_\omega(Q_\omega)
$
for some
$
	(\omega, \lambda)
		\in (0,\infty) \times \R.
$
Then, the following three conditions are equivalent.
\begin{enumerate}
\item
$
	K(e^{-\lambda i|x|^2}f)
		< 0,
$
\item
$
	M(f)^\frac{1-s_c}{s_c}N(f)
		> M(Q_1)^\frac{1-s_c}{s_c}N(Q_1),
$
\item
$
	T_\omega(Q_\omega)
		< T_\omega(f).
$
\end{enumerate}
\end{proposition}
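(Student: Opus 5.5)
Throughout, I would set $g:=e^{-\lambda i|x|^2}f$. The basic observation is that $M(g)=M(f)$ and $N(g)=N(f)$. Hence $T_\omega(g)=T_\omega(f)$, and condition (2) is also unchanged if $f$ is replaced by $g$. Only $L$ and therefore $K$ feel the phase. So everything reduces to statements about $g$, with the hypothesis $S_\omega(g)\le S_\omega(Q_\omega)$. I would prove (1)$\Rightarrow$(3)$\Rightarrow$(1), and then (1)$\Leftrightarrow$(2).

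\textbf{(3)$\Rightarrow$(1).} This is immediate from \eqref{142}. Suppose $K(g)\ge0$. Then
$$S_\omega(g)=T_\omega(g)+\tfrac14K(g)\ge T_\omega(g)=T_\omega(f)>T_\omega(Q_\omega)=S_\omega(Q_\omega),$$
which contradicts the hypothesis.

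\textbf{(1)$\Rightarrow$(3).} Here I would use the $L^2$-invariant scaling $g^\mu:=\mu^{d/2}g(\mu\,\cdot)$. This gives
$$K(g^\mu)=2\mu^2L(g)-\tfrac{d(p-1)}{p+1}\mu^{d(p-1)/2}N(g).$$
Since $d(p-1)/2>2$ and $K(g^1)<0$, there is $\mu_0\in(0,1)$ with $K(g^{\mu_0})=0$. Then $M(g^{\mu_0})=M(g)$ and $N(g^{\mu_0})=\mu_0^{d(p-1)/2}N(g)<N(g)$. By \eqref{136},
$$T_\omega(Q_\omega)\le T_\omega(g^{\mu_0})<T_\omega(g)=T_\omega(f).$$
This argument does not use the hypothesis, and it works for every $\omega'>0$ in place of $\omega$. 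I will reuse this below.

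\textbf{(1)$\Rightarrow$(2).} Write $Q_{\omega'}(x)=\omega'^{1/(p-1)}Q_1(\sqrt{\omega'}x)$. Then $M(Q_{\omega'})=\omega'^{-s_c}M(Q_1)$ and $N(Q_{\omega'})=\omega'^{1-s_c}N(Q_1)$, so $M^{\frac{1-s_c}{s_c}}N$ takes the same value on every $Q_{\omega'}$. Choose $\omega'$ with $M(Q_{\omega'})=M(f)$. The previous step with $\omega'$ gives $T_{\omega'}(f)>T_{\omega'}(Q_{\omega'})$. The mass terms cancel, so $N(f)>N(Q_{\omega'})$. Multiplying by $M(f)^{\frac{1-s_c}{s_c}}=M(Q_{\omega'})^{\frac{1-s_c}{s_c}}$ gives (2).

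\textbf{(2)$\Rightarrow$(1).} This is the step I expect to be the main obstacle. The hypothesis is tied to the given $\omega$, while (2) is scale-invariant. I would argue by contradiction, assuming $K(g)\ge0$. First, by Proposition \ref{Equivalent conditions}, $S_\omega(g)\le S_\omega(Q_\omega)$ yields the scale-invariant bound $M(g)^{\frac{1-s_c}{s_c}}E(g)\le M(Q_1)^{\frac{1-s_c}{s_c}}E(Q_1)$. I would run the classical argument of Theorem \ref{Below} on the quantity $y=M(g)^{1-s_c}L(g)^{s_c}$:
\begin{itemize}
\item Inserting \eqref{G-N inequality} into $E$ bounds $M^{\frac{1-s_c}{s_c}}E(g)$ from below by $h(y)$, where $h(y)=\tfrac12 y^{1/s_c}-\tfrac{C_{\rm GN}}{p+1}y^{\frac{p-1}{2}+\frac1{s_c}}$ up to normalizing constants.
\item $h$ is maximized exactly at $Q_1$'s value, by \eqref{PI}.
\item $K(g)\ge0$ places $y$ on the left branch of $h$, i.e. $y\le M(Q_1)^{1-s_c}L(Q_1)^{s_c}$.
\end{itemize}
Then $K(g)\ge0$, i.e. $N(g)\le\frac{2(p+1)}{d(p-1)}L(g)$, together with the Pohozaev identity $2L(Q_1)=\frac{d(p-1)}{p+1}N(Q_1)$, gives
$$M(g)^{\frac{1-s_c}{s_c}}N(g)\le M(Q_1)^{\frac{1-s_c}{s_c}}N(Q_1),$$
contradicting (2). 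The delicate point is the equality case at the threshold. I would need that the two-sided information ($E$ at or below the threshold and $K\ge0$) cannot coexist with strict $MN$ excess. This is handled by the strict monotonicity of $h$ on each branch.
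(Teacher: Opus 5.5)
Your proof is correct, but it is organized differently from the paper's. The paper runs the cycle (1)$\Rightarrow$(2)$\Rightarrow$(3)$\Rightarrow$(1). It proves (1)$\Rightarrow$(2) with the (refined) Gagliardo--Nirenberg inequality. It proves (2)$\Rightarrow$(3) with the weighted Young inequality
\begin{align*}
	T_\omega(f) \geq \Bigl\{\frac{\omega}{2(1-s_c)}M(f)\Bigr\}^{1-s_c}\Bigl\{\frac{p-1}{2(p+1)}N(f)\Bigr\}^{s_c},
\end{align*}
which is an equality at $Q_\omega$. It proves (3)$\Rightarrow$(1) exactly as you do, so the hypothesis $S_\omega(g)\le S_\omega(Q_\omega)$ is used only in that last step.

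Your (1)$\Rightarrow$(3), which pushes $g$ onto $\{K=0\}$ by the $L^2$-scaling and invokes \eqref{136}, is a genuinely different argument. It avoids Gagliardo--Nirenberg and uses only the variational characterization through $T_\omega$, which is closer to the philosophy the paper advertises. Choosing $\omega'$ with $M(Q_{\omega'})=M(f)$ is a clean way to convert it into the scale-invariant statement (2). The cost shows up in (2)$\Rightarrow$(1). The obstacle you anticipate (the hypothesis is tied to $\omega$ while (2) is scale-invariant) is removed exactly by the Young inequality above. It gives (2)$\Rightarrow$(3) for every $\omega$ with no hypothesis, and then your own (3)$\Rightarrow$(1) finishes in one line.

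Your contradiction route for (2)$\Rightarrow$(1) does work, but the bullet ``$K(g)\ge0$ places $y$ on the left branch of $h$'' is not justified as written. $K\ge0$ alone does not bound $y$: take $e^{ikx_1}\varphi$ with $k$ large. Nor can $h$ help, because $h(y)\le h(y_*)$ holds for every $y$. The right step is $K(g)\ge0\Rightarrow E(g)\ge\frac{s_c}{d}L(g)$, with equality for $Q_1$. Together with $M(g)^{\frac{1-s_c}{s_c}}E(g)\le M(Q_1)^{\frac{1-s_c}{s_c}}E(Q_1)$ this gives $y\le y_*$. Your final display then gives the non-strict $MN$ bound, which contradicts the strict inequality in (2) directly. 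There is no equality-case issue, and neither \eqref{G-N inequality} nor the monotonicity of $h$ is needed.

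Finally, Proposition \ref{Equivalent conditions} is stated with an existential $\lambda$. What you actually need is the fixed-$g$ implication $S_\omega(g)\le S_\omega(Q_\omega)\Rightarrow M(g)^{\frac{1-s_c}{s_c}}E(g)\le M(Q_1)^{\frac{1-s_c}{s_c}}E(Q_1)$. This is again a weighted Young inequality; it is what the cited equivalence (1)$\Leftrightarrow$(2) proves for a single function.
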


\begin{remark}\label{Rem: K, MN, and T}
In addition, if we insert
$
	``="
$
and repeat the same proof as Proposition \ref{Prop: K, MN, and T}, then we get a equivalence of
$
	K(e^{-\lambda i|x|^2}f)
		\leq 0,
$
$
	M(f)^\frac{1-s_c}{s_c}N(f)
		\geq M(Q_1)^\frac{1-s_c}{s_c}N(Q_1),
$
and
$
	T_\omega(Q_\omega)
		\leq T_\omega(f).
$
By taking the contraposition of these equivalences, the following three conditions also are equivalent under the conditions in Proposition \ref{Prop: K, MN, and T}:
\begin{itemize}
\item
$
	K(e^{-\lambda i|x|^2}f)
		> 0,
$
\item
$
	M(f)^\frac{1-s_c}{s_c}N(f)
		< M(Q_1)^\frac{1-s_c}{s_c}N(Q_1),
$
\item
$
	T_\omega(Q_\omega)
		> T_\omega(f),
$
\end{itemize}
and
\begin{itemize}
\item
$
	K(e^{-\lambda i|x|^2}f)
		= 0,
$
\item
$
	M(f)^\frac{1-s_c}{s_c}N(f)
		= M(Q_1)^\frac{1-s_c}{s_c}N(Q_1),
$
\item
$
	T_\omega(Q_\omega)
		= T_\omega(f).
$
\end{itemize}
\end{remark}

\begin{proof}[Proof of Proposition \ref{Prop: K, MN, and T}]
(1)
$
	\Longrightarrow
$
(2):
We note that the condition (1) deduces that
\begin{align*}
	K(e^{-\lambda_{\min} i|x|^2}f)
		= K(f) - \frac{2\<ixf,\nabla f\>_{L^2}^2}{\|xf\|_{L^2}^2}
		< 0
\end{align*}
by the argument in Proposition \ref{Equivalent conditions}.
Applying the refined Gagliardo--Nirenberg inequality \eqref{108}, we have
\begin{align*}
	0
		> K(f) - \frac{2\<ixu_0,\nabla u_0\>_{L^2}^2}{\|xu_0\|_{L^2}^2}
		\geq 2N(f)\left\{\frac{1}{C_{\rm GN}^\frac{2}{(p-1)s_c+2}M(f)^{\frac{2}{d}(1-s_c)}N(f)^{\frac{2}{d}s_c}} - \frac{d(p-1)}{2(p+1)}\right\},
\end{align*}
that is,
\begin{align*}
	M(f)^{\frac{2}{d}(1-s_c)}N(f)^{\frac{2}{d}s_c}
		> \frac{2(p+1)}{d(p-1)}\cdot \frac{1}{C_{\rm GN}^\frac{2}{(p-1)s_c+2}}.
\end{align*}
Therefore, recalling that
$
	Q_1
$
is an optimizer to the Gagliardo--Nirenberg inequality \eqref{G-N inequality} and has \eqref{PI}, we obtain (2).

(2)
$
	\Longrightarrow
$
(3):
If we suppose (2), then it follows from
$
	Q_\omega
		= \omega^\frac{1}{p-1}Q_1(\omega^\frac{1}{2}\cdot),
$
\eqref{PI}, and Young's inequality that
\begin{align*}
	T_\omega(Q_\omega)
		& = \frac{\omega^{1-s_c}}{2(1-s_c)}M(Q_1)^{1-s_c}\cdot \left\{\frac{p-1}{p+1}(1-s_c)\right\}^{s_c}N(Q_1)^{s_c} \\
		& < \left\{\frac{\omega}{2(1-s_c)}M(f)\right\}^{1-s_c}\left\{\frac{p-1}{2(p+1)}N(f)\right\}^{s_c}
		\leq T_\omega(f),
\end{align*}
which implies (3).

(3)
$
	\Longrightarrow
$
(1):
Combined (3) with the condition
$
	S_\omega(e^{-\lambda i|x|^2}f)
		\leq S_\omega(Q_\omega),
$
we have
\begin{align*}
	T_\omega(f) + \frac{1}{4}K(e^{-\lambda i|x|^2}f)
		= S_\omega(e^{-\lambda i|x|^2}f)
		\leq S_\omega(Q_\omega)
		= T_\omega(Q_\omega)
		< T_\omega(f),
\end{align*}
that is, (1) holds.
\end{proof}

\section{Well-posedness}\label{Sec: Well-posedness}

The conditions
$
	d
		\in \N
$
and
$
	1+\frac{4}{d}
		< p
		< 2^\ast-1
$
are assumed throughout Sections \ref{Sec: Well-posedness}-\ref{Sec: S}.

In this section, we recall well-posedness of \eqref{NLS}.
To state this, we put some spaces.
Throughout this paper, we fix a number
$
	r_0
$
with
$
	p+1
		< r_0
		< 2^\ast
$
and set
$
	q_0, q_1, \~{q}_1
$
as
\begin{align*}
	\frac{1}{q_0}
		:= \frac{d}{2}\(\frac{1}{2} - \frac{1}{r_0}\),
	\quad
	\frac{1}{q_1}
		:= \frac{d}{2}\(\frac{1}{2} - \frac{1}{r_0} - \frac{s_c}{d}\),
	\quad
	\frac{1}{\~{q}_1}
		:= \frac{d}{2}\(\frac{1}{2} - \frac{1}{r_0} + \frac{s_c}{d}\).
\end{align*}
We note that
$
	(q_0,r_0),
$
$
	(q_1,r_0),
$
$
	(\~{q}_1,r_0)
$
are admissible pairs of
$
	L^2,
$
$
	\dot{H}^{s_c},
$
$
	\dot{H}^{-s_c}
$
respectively.
Besides these pairs, we define a
$
	\dot{H}^{s_c}
$-admissible pair
$
	(q_2,r_2)
$
as
\begin{align*}
	\frac{p-1}{r_2}
		:= 1 - \frac{2}{r_0},
	\quad
	\frac{1}{q_2}
		:= \frac{d}{2}\(\frac{1}{2} - \frac{1}{r_2} - \frac{s_c}{d}\).
\end{align*}

We define spaces
\begin{align*}
	X(I)
		& := L_t^\infty(I;L_x^2) \cap L_t^{q_0}(I;L_x^{r_0}), \quad
	S(I)
		:= L_t^{q_1}(I;L_x^{r_0}) \cap L_t^{q_2}(I;L_x^{r_2}), \\
	N(I)
		& := {L_t^{\~{q}_1'}(I;L_x^{r_0'})}.
\end{align*}
In addition, we put a space
$
	X^1(I)
$
with a norm
$
	\|f\|_{X^1(I)}
		:= \|(1-\Delta)^\frac{1}{2}f\|_{X(I)}.
$
We note that the solution
$
	u
$
to \eqref{NLS} satisfies
\begin{align}\label{131}
	\|u\|_{S(I)}
		\leq \|e^{it\Delta}u(0)\|_{S(I)} + C\|u^p\|_{N(I)}
		\leq \|e^{it\Delta}u(0)\|_{S(I)} + C\|u\|_{S(I)}^p
\end{align}
by a Strichartz estimate (see \cite[Lemma 3.1]{AkaNaw13}).

Then, we can find the following in \cite[Proposition 3.4]{AkaNaw13}:

\begin{proposition}\label{S-norm}
Assume that the solution
$
	u
$
to \eqref{NLS} exists on
$
	[0,\infty)
$
and has
\begin{align*}
	\|u\|_{S([0,\infty))}
		< \infty,
	\quad
	\|u\|_{L_t^\infty([0,\infty);H_x^1)}
		< \infty.
\end{align*}
Then,
$
	u
$
satisfies
\begin{align*}
	\|u\|_{X^1([0,\infty))}
		< \infty
\end{align*}
and scatters in the forward time.
\end{proposition}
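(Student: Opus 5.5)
The plan is to follow the standard Kenig--Merle / Akahori--Nawa scheme. Write $\langle\nabla\rangle := (1-\Delta)^{1/2}$. First, I use the Strichartz estimate for the $X$-norm:
\begin{align*}
	\|u\|_{X^1(I)}
		\lesssim \|u(0)\|_{H^1} + \||u|^{p-1}u\|_{L_t^{q_0'}(I;W_x^{1,r_0'})}.
\end{align*}
By the fractional chain rule and H\"older's inequality in $x$, the nonlinear term is bounded by $\|u\|_{L_t^{q_2}L_x^{r_2}}^{p-1}\|\langle\nabla\rangle u\|_{L_t^{q_0}L_x^{r_0}}$, but only with the time exponents balanced. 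The choice $\frac{p-1}{r_2} = 1-\frac{2}{r_0}$ makes the spatial exponents close exactly: $\frac{p-1}{r_2}+\frac1{r_0} = \frac1{r_0'}$. For the time exponents I need $\frac{p-1}{q_2}+\frac1{q_0} = \frac1{q_0'}$, i.e.\ $\frac{p-1}{q_2} = 1-\frac2{q_0}$. This should follow from the scaling relations defining $q_0,q_2$: one checks $\frac{p-1}{q_2} = \frac{d(p-1)}{2}\bigl(\frac12-\frac1{r_2}\bigr)-\frac{(p-1)s_c}{2}$, and substituting $s_c$ and $r_2$ gives $1-\frac{d}{r_0}\cdot\ldots$, matching $1-\frac{2}{q_0}$. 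This is a routine computation, and I will verify it once.

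With the estimate in hand, I split $[0,\infty)$ into finitely many intervals $I_j$ on which $\|u\|_{L_t^{q_2}(I_j;L_x^{r_2})}\le\eta$. This is possible because $\|u\|_{S([0,\infty))}<\infty$. On each interval I apply the Strichartz estimate with initial time at the left endpoint. Using $\|u(t_j)\|_{H^1}\le\sup_t\|u\|_{H^1}<\infty$, this yields
\begin{align*}
	\|\langle\nabla\rangle u\|_{X(I_j)}
		\le C\sup_t\|u\|_{H^1} + C\eta^{p-1}\|\langle\nabla\rangle u\|_{X(I_j)}.
\end{align*}
Choosing $\eta$ small, I absorb the last term. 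A continuity argument on the truncated intervals ensures finiteness before absorption. Summing over the finitely many $j$ gives $\|u\|_{X^1([0,\infty))}<\infty$.

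For scattering, I set $\psi_+ := u_0 + i\int_0^\infty e^{-is\Delta}|u|^{p-1}u(s)\,ds$. By the dual Strichartz estimate, for $t<t'$,
\begin{align*}
	\|e^{-it'\Delta}u(t')-e^{-it\Delta}u(t)\|_{H^1}
		\lesssim \|u\|_{L^{q_2}((t,t');L^{r_2})}^{p-1}\|u\|_{X^1((t,t'))}.
\end{align*}
The right-hand side tends to $0$ as $t,t'\to\infty$, since both norms are finite over $[0,\infty)$. Hence $e^{-it\Delta}u(t)$ is Cauchy in $H^1$ and converges to $\psi_+$, and unitarity of $e^{it\Delta}$ on $H^1$ gives $\|u(t)-e^{it\Delta}\psi_+\|_{H^1}\to0$.

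The main obstacle is the exponent bookkeeping: checking that $(q_0,r_0)$ with $(q_2,r_2)$ is compatible in the nonlinear estimate and that every pair used is admissible. Admissibility requires $2\le r_2$, $q_2\ge2$, and $r_0<2^\ast$, which presumably is why $p+1<r_0<2^\ast$ was imposed. If the time exponents fail to balance exactly, I would instead interpolate $L_t^{q_2}L_x^{r_2}$ between the $S$-norm and the $L_t^\infty H^1$ bound via Sobolev embedding, keeping a small power of the $S$-norm. The absorption argument then goes through unchanged.
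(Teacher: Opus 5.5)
Your proposal is correct and is essentially the standard argument behind Akahori--Nawa's Proposition 3.4, which the paper cites in place of a proof. In that argument the pair $(q_2,r_2)$ is included in $S(I)$ precisely to give $\|\langle\nabla\rangle(|u|^{p-1}u)\|_{L_t^{q_0'}L_x^{r_0'}}\lesssim\|u\|_{L_t^{q_2}L_x^{r_2}}^{p-1}\|u\|_{X^1}$, followed by the finite splitting into small-norm intervals and the Cauchy argument for $e^{-it\Delta}u(t)$; the time exponents balance exactly, since $\frac{p-1}{q_2}=1-\frac{d}{2}+\frac{d}{r_0}=1-\frac{2}{q_0}$, so your interpolation fallback is unnecessary, and only the admissibility of $(q_0,r_0)$ is used, not that of $(q_2,r_2)$.
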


We see small data scattering in \cite[Proposition 3.5]{AkaNaw13}.

\begin{proposition}[Small data scattering]\label{SDS}
Let
$
	t_0
		\in \R
$
and
$
	I
$
be a time interval with
$
	t_0
		\in \overline{I}.
$
Then, there exists
$
	\delta
		> 0
$
such that, for any
$
	u_0
		\in H^1(\R^d)
$
satisfying
\begin{align*}
	\|e^{i(t-t_0)\Delta}u_0\|_{S(I)}
		\leq \delta,
\end{align*}
there exists unique solution
$
	u
		\in C(I;H^1(\R^d))
$
to \eqref{NLS} with
$
	u(t_0)
		= u_0
$
and
\begin{align*}
	\|u\|_{S(I)}
		\leq 2\|e^{i(t-t_0)\Delta}u_0\|_{S(I)}
	\ \text{ and }\ 
	\|u\|_{X^1(I)}
		\lesssim \|u_0\|_{H^1}.
\end{align*}
\end{proposition}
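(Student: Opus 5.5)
This is a standard contraction-mapping argument for the Duhamel map
\begin{align*}
	\Phi(u)(t) := e^{i(t-t_0)\Delta}u_0 + i\int_{t_0}^t e^{i(t-s)\Delta}(|u|^{p-1}u)(s)\,ds .
\end{align*}
We run it first in the scale-invariant space $S(I)$ and then recover $H^1$-level control in $X^1(I)$. The only inputs are the Strichartz estimates for the pairs fixed above and the nonlinear bound behind \eqref{131}, namely $\|\,|u|^{p-1}u\|_{N(I)}\lesssim \|u\|_{S(I)}^p$, together with the difference version
\begin{align*}
	\|\,|u|^{p-1}u-|v|^{p-1}v\|_{N(I)}\lesssim (\|u\|_{S(I)}^{p-1}+\|v\|_{S(I)}^{p-1})\|u-v\|_{S(I)}.
\end{align*}
Both follow from H\"older's inequality and the exponent relations $\frac{p-1}{r_2}+\frac{2}{r_0}=1$ and the scaling relation $\frac1{\tilde q_1'}=\frac{p-1}{q_2}+\frac1{q_1}$ (to be checked from the definitions). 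I also use the inhomogeneous Strichartz estimate from the dual $\dot H^{-s_c}$-admissible pair to the $\dot H^{s_c}$-admissible pairs, as in \cite[Lemma 3.1]{AkaNaw13}.

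\textbf{Step 1: contraction in $S(I)$.} Let $\eta:=\|e^{i(t-t_0)\Delta}u_0\|_{S(I)}\le\delta$, and work on the closed ball $B=\{u:\|u\|_{S(I)}\le 2\eta\}$ with metric $\|u-v\|_{S(I)}$; this set is complete. For $u,v\in B$ the estimates above give
\begin{align*}
	\|\Phi(u)\|_{S(I)}\le \eta + C(2\eta)^p
	\quad\text{and}\quad
	\|\Phi(u)-\Phi(v)\|_{S(I)}\le C(2\eta)^{p-1}\|u-v\|_{S(I)}.
\end{align*}
Choosing $\delta$ with $C2^p\delta^{p-1}\le 1/2$ makes $\Phi$ a contraction of $B$. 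This yields a unique fixed point in $B$, and it satisfies $\|u\|_{S(I)}\le 2\eta$.

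\textbf{Step 2: $H^1$ bound in $X^1(I)$.} Apply $(1-\Delta)^{1/2}$, and in practice $\nabla$ and the identity separately, to the Duhamel formula. $L^2$-admissible Strichartz gives
\begin{align*}
	\|u\|_{X^1(I)}\lesssim \|u_0\|_{H^1}+\|\langle\nabla\rangle(|u|^{p-1}u)\|_{L_t^{q_0'}L_x^{r_0'}}.
\end{align*}
By H\"older's inequality the nonlinear term is bounded by $\|u\|_{S(I)}^{p-1}\|u\|_{X^1(I)}$, again using $\frac{p-1}{r_2}+\frac{2}{r_0}=1$ and the time-exponent relation. Since $\|u\|_{S(I)}^{p-1}\le(2\delta)^{p-1}$ is small, this term is absorbed on the left, giving $\|u\|_{X^1(I)}\lesssim\|u_0\|_{H^1}$.

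\textbf{Step 3: regularity and uniqueness.} The $X^1$ bound shows $u\in C(I;H^1)$. To make the absorption in Step 2 rigorous, I would either close the fixed-point argument in $B\cap\{\|u\|_{X^1(I)}\le 2C\|u_0\|_{H^1}\}$ with the weaker $S$-metric, or argue on bounded subintervals by continuity. Uniqueness in $C(I;H^1)$, beyond uniqueness in the ball, follows from standard local well-posedness, since $H^1$ solutions have locally finite $S$-norm.

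The main obstacle I expect is the exponent bookkeeping. One must confirm that $(q_2,r_2)$ is a genuine $\dot H^{s_c}$-admissible pair across the whole intercritical range, and that the time exponents match so that the H\"older step and the inhomogeneous non-admissible Strichartz estimate apply. That estimate is why $\tilde q_1$ is introduced, and I would cite \cite[Lemma 3.1]{AkaNaw13} for it rather than reprove it.
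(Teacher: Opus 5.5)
Your proposal is correct. The paper gives no proof of its own here; it cites \cite[Proposition 3.5]{AkaNaw13}, and that proof is the same Strichartz-based Duhamel fixed-point argument in $S(I)$ followed by the $X^1(I)$ bound that you describe. The H\"older relations $\frac{1}{\tilde q_1'}=\frac{p-1}{q_2}+\frac{1}{q_1}$ and $\frac{1}{q_0'}=\frac{p-1}{q_2}+\frac{1}{q_0}$ do hold with the paper's exponents, and your Step 3 handling of the $X^1$ absorption and of uniqueness in $C(I;H^1)$ is sound.
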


Next, we see long time perturbation theory in \cite[Proposition 3.6]{AkaNaw13}.

\begin{proposition}[Long time perturbation]\label{LTP}
For given
$
	A
		> 1,
$
there exists
$
	\e
		= \e(A)
		> 0
$
such that the following is true:
Let
$
	I
$
be a time interval and
$
	\~{u}
		\in C_t(I;H_x^1(\R^d))
$
be a function satisfying
\begin{align*}
	\|\~{u}\|_{S(I)}
		\leq A
	\ \text{ and }\ 
	\|i\partial_t \~{u} + \Delta \~{u} + |\~{u}|^{p-1}\~{u}\|_{N(I)}
		\leq \e.
\end{align*}
If a function
$
	u_0
		\in H^1(\R^d)
$
satisfies
\begin{align*}
	\|e^{i(t-t_0)\Delta}(u_0 - \~{u}(t_0))\|_{S(I)}
		\leq \e
\end{align*}
for some
$
	t_0
		\in I,
$
then there exists a solution
$
	u
		\in C_t(I;H_x^1(\R^d))
$
to \eqref{NLS} with
$
	u(t_0)
		= u_0
$
such that
\begin{align*}
	\|u\|_{S(I)}
		\lesssim 1.
\end{align*}
\end{proposition}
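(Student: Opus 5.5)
This is the standard long time perturbation argument, done by a continuity/bootstrap argument on the Strichartz-type norm $S$. Set $w := u - \~{u}$ and $e := i\partial_t \~{u} + \Delta \~{u} + |\~{u}|^{p-1}\~{u}$. Then $w$ solves
\begin{align*}
	i\partial_t w + \Delta w = -\bigl(|\~{u}+w|^{p-1}(\~{u}+w) - |\~{u}|^{p-1}\~{u}\bigr) + e, \qquad w(t_0) = u_0 - \~{u}(t_0).
\end{align*}
Local existence of $u$ on a neighbourhood of $t_0$ comes from the local theory (Proposition \ref{SDS} applied on short intervals). It therefore suffices to prove an a priori bound for $\|w\|_{S(J)}$ on every subinterval $J \ni t_0$ of $I$ on which $u$ exists. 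Combined with the blow-up alternative, this bound will extend $u$ to all of $I$. By time symmetry we treat only $t \geq t_0$.

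First, partition $I \cap [t_0,\infty)$ into $n = n(A,\eta)$ consecutive intervals $I_j = [t_j,t_{j+1}]$ with $\|\~{u}\|_{S(I_j)} \leq \eta$, where $\eta>0$ is small and will be fixed later. This is possible because the $S$ norm is a sum of two finite $L^q_t$ norms, so $n \lesssim (A/\eta)^{\max(q_1,q_2)}$.

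On each $I_j$, write Duhamel's formula from $t_j$ and apply the Strichartz estimate underlying \eqref{131}, namely $\|\int e^{i(t-s)\Delta}F\,ds\|_{S} \lesssim \|F\|_{N}$, together with the nonlinear estimate of \eqref{131}. The latter, by Hölder with the exponent choices $(q_2,r_2)$, should give
\begin{align*}
	\bigl\| |\~{u}+w|^{p-1}(\~{u}+w) - |\~{u}|^{p-1}\~{u} \bigr\|_{N(I_j)} \lesssim \bigl(\|\~{u}\|_{S(I_j)}^{p-1} + \|w\|_{S(I_j)}^{p-1}\bigr)\|w\|_{S(I_j)}.
\end{align*}
This yields
\begin{align*}
	\|w\|_{S(I_j)} \leq \|e^{i(t-t_j)\Delta}w(t_j)\|_{S(I_j)} + C\e + C\eta^{p-1}\|w\|_{S(I_j)} + C\|w\|_{S(I_j)}^p.
\end{align*}
Fix $\eta$ with $C\eta^{p-1} \leq 1/2$. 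A continuity argument in the right endpoint then gives $\|w\|_{S(I_j)} \leq 4\bigl(\|e^{i(t-t_j)\Delta}w(t_j)\|_{S(I_j)} + C\e\bigr)$, provided this quantity is below a fixed small threshold $\delta_0$.

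To propagate the linear term, write the Duhamel formula from $t_0$:
\begin{align*}
	e^{i(t-t_{j+1})\Delta}w(t_{j+1}) = e^{i(t-t_0)\Delta}w(t_0) + i\int_{t_0}^{t_{j+1}} e^{i(t-s)\Delta}\bigl(\text{nonlinear difference} - e\bigr)(s)\,ds.
\end{align*}
Estimating its $S$ norm over $[t_{j+1},\infty)\cap I$ by the same Strichartz and nonlinear bounds summed over $I_0,\dots,I_j$, induction gives $\|e^{i(t-t_j)\Delta}w(t_j)\|_{S} \leq (C')^{j}\e$. Choosing $\e(A)$ so that $(C')^{n}\e < \delta_0$ closes the induction on every $I_j$. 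Summing over the $n$ pieces then gives $\|w\|_{S(I)} \lesssim_A \e$, hence $\|u\|_{S(I)} \leq A + 1 \lesssim 1$, with the implicit constant depending only on $A$.

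The main obstacle is the nonlinear difference estimate in the $N$ norm. One must check that the Hölder exponents built into $S$ and $N$ (the pair $(q_2,r_2)$ with $(p-1)/r_2 = 1 - 2/r_0$, and the pair $(q_1,r_0)$ dual to $(\~{q}_1,r_0)$) close using only $S$ norms, including for non-integer $p$. One must also use the inhomogeneous Strichartz estimate for non-admissible pairs, from \cite{AkaNaw13}, which \eqref{131} already encodes. A further point is that existence on all of $I$ needs the $H^1$ norm of $u$ to stay finite. I would obtain this by repeating the argument at the level of $X^1$: the bound on $\|u\|_{S}$, applied piecewise as in Proposition \ref{S-norm}, controls $\|u\|_{X^1(I)}$, so $u$ cannot blow up inside $I$.
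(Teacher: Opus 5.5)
Your proposal is correct. It is the standard long-time perturbation argument: split $I$ into $n(A)$ subintervals on which $\|\widetilde{u}\|_{S}$ is small, and bootstrap $w=u-\widetilde{u}$ on each one using the Strichartz and H\"older estimate behind \eqref{131}; this closes because $\frac{p-1}{r_2}+\frac{1}{r_0}=\frac{1}{r_0'}$ and $\frac{p-1}{q_2}+\frac{1}{q_1}=\frac{1}{\widetilde{q}_1'}$. You then propagate the free part of $w$ by Duhamel from $t_0$ and exclude blow-up inside $I$ through the $X^1$ bound. The paper gives no proof of its own and simply cites \cite[Proposition 3.6]{AkaNaw13}, whose argument is essentially this one; your forcing term should enter the $w$-equation as $-e$ rather than $+e$, which is immaterial.
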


\section{Global existence and Blow-up}\label{Sec: G vs B}

In this section, we show the global and blow-up results in Theorem \ref{S vs B} by using
$
	S_\omega
$
and
$
	K.
$

\begin{lemma}\label{Estimate of K at t=0}
Let
$
	f
		\in H^1(\R^d)
$
satisfy
$
	xf
		\in L^2(\R^d).
$
If
\eqref{101}
and
$
	K(e^{-\lambda i|x|^2}f)
		< 0
$
holds, then
\begin{align*}
	K(f)
		< 4\{S_\omega(f) - S_\omega(Q_\omega)\}.
\end{align*}
\end{lemma}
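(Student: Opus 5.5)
The inequality to prove is equivalent to a strict lower bound on the functional $T_\omega$ of \eqref{142}. By definition, $T_\omega(f) = S_\omega(f) - \frac{1}{4}K(f)$, and by \eqref{136}, $S_\omega(Q_\omega) = T_\omega(Q_\omega)$. Hence
\begin{align*}
	K(f) < 4\{S_\omega(f) - S_\omega(Q_\omega)\}
	\ \Longleftrightarrow\
	T_\omega(Q_\omega) < T_\omega(f),
\end{align*}
where $\omega > 0$ is the frequency provided by \eqref{101}. So the plan is to prove $T_\omega(Q_\omega) < T_\omega(f)$ for that same $\omega$.

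The key observation is that $T_\omega$ involves only $M$ and $N$. Both are invariant under multiplication by the phase $e^{-\lambda i|x|^2}$, so $T_\omega(f) = T_\omega(e^{-\lambda i|x|^2}f)$. The hypotheses of the lemma are exactly those of Proposition \ref{Prop: K, MN, and T} with the same pair $(\omega,\lambda)$: namely $S_\omega(e^{-\lambda i|x|^2}f) \leq S_\omega(Q_\omega)$, together with condition (1) there, $K(e^{-\lambda i|x|^2}f) < 0$. Invoking the implication (1) $\Rightarrow$ (3) of that proposition gives $T_\omega(Q_\omega) < T_\omega(f)$ directly. Rearranging with the identity above then finishes the proof.

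The only delicate point is the \emph{strictness} of the inequality. Applying the variational characterization \eqref{136} to $g = e^{-\lambda i|x|^2}f$, which satisfies $K(g) < 0$, would only give the non-strict bound $T_\omega(g) \geq T_\omega(Q_\omega)$. That is why I route the argument through the chain (1) $\Rightarrow$ (2) $\Rightarrow$ (3) of Proposition \ref{Prop: K, MN, and T}. In that chain, strictness comes from two facts: the refined Gagliardo--Nirenberg inequality \eqref{108} applied to $K(e^{-\lambda_{\min}i|x|^2}f) \leq K(e^{-\lambda i|x|^2}f) < 0$ gives a strict $MN$ lower bound, and Young's inequality then transfers it to $T_\omega$. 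I would simply cite that proposition rather than redo it, and record that the frequency $\omega$ is the same throughout.
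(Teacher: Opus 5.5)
Your proof is correct. It shares with the paper the reduction you make explicit: since $S_\omega-\tfrac14K=T_\omega$ depends only on $M$ and $N$, the claim for $f$ is the same as $K(g)<4\{S_\omega(g)-S_\omega(Q_\omega)\}$ for $g=e^{-\lambda i|x|^2}f$. Both say $T_\omega(f)>T_\omega(Q_\omega)$; the paper leaves this step implicit in ``which implies the desired result''.

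You differ in where the strict inequality comes from. The paper applies a general below-the-ground-state estimate (Ikeda--Inui, Proposition~2.8) directly to $g$. You run the chain (1)$\Rightarrow$(2)$\Rightarrow$(3) of Proposition~\ref{Prop: K, MN, and T}, using the refined Gagliardo--Nirenberg inequality, the Pohozaev identities for $Q_1$, the scaling $Q_\omega=\omega^{1/(p-1)}Q_1(\omega^{1/2}\cdot)$, and Young's inequality.

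Your route is self-contained within the paper. The paper's route needs only the variational characterization \eqref{MP} plus an $L^2$-invariant scaling. That is why it carries over to Section~\ref{Sec:Application}, where no Gagliardo--Nirenberg characterization or $\omega$-scaling relation is available for $Q_{\omega,P}$.

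Your worry that \eqref{136} gives only a non-strict bound can also be settled without Gagliardo--Nirenberg. For $g_s:=s^{d/2}g(s\,\cdot)$ one has $K(g_s)=2s^2L(g)-\frac{d(p-1)}{p+1}s^{d(p-1)/2}N(g)$. Since $d(p-1)/2>2$ and $K(g)<0$, this gives $K(g_{s_0})=0$ for some $s_0\in(0,1)$. Moreover $T_\omega(g_s)=\frac{\omega}{2}M(g)+\frac{s_c(p-1)}{2(p+1)}s^{d(p-1)/2}N(g)$ is strictly increasing in $s$. Hence
\begin{align*}
T_\omega(g)>T_\omega(g_{s_0})=S_\omega(g_{s_0})\geq S_\omega(Q_\omega).
\end{align*}
This is the standard way such estimates are proved.

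Finally, neither argument uses the action bound in \eqref{101}. Only $K(e^{-\lambda i|x|^2}f)<0$ matters, and the conclusion then holds for every $\omega>0$.
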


\begin{proof}
Applying \cite[Proposition 2.8]{IkeInu17}, we have
\begin{align*}
	K(e^{-\lambda i|x|^2}f)
		< 4\{S_\omega(e^{-\lambda i|x|^2}f) - S_\omega(Q_\omega)\},
\end{align*}
which implies the desired result.
\end{proof}

As a preparation to prove the global and blow-up results, we see also the following lemma.
When
$
	f
$
satisfies \eqref{101} and
$
	K(e^{-\lambda i|x|^2}f)
		< 0,
$
it is obvious that we can take
$
	\lambda
		= \lambda_{\min},
$
where
$
	\lambda_{\min}
$
was defined in the proof of Proposition \ref{Equivalent conditions}.
On the other hand, it is not clear for
$
	f
$
with
$
	K(e^{-\lambda i|x|^2}f)
		> 0.
$
The following lemma implies that we can take
$
	\lambda
		= \lambda_{\min}
$
for
$
	f
$
with
$
	K(e^{-\lambda i|x|^2}f)
		> 0
$
and
$
	S_\omega(e^{-\lambda i|x|^2}f)
		< S_\omega(Q_\omega)
$
instead of
$
	S_\omega(e^{-\lambda i|x|^2}f)
		\leq S_\omega(Q_\omega).
$

\begin{lemma}\label{Persistence of sign}
Let
$
	f
		\in H^1(\R^d)
$
satisfy
$
	xf
		\in L^2(\R^d).
$
Assume that there exists
$
	(\omega, \lambda_0)
		\in (0,\infty) \times \R
$
such that
\begin{align*}
	S_\omega(e^{-\lambda_0 i|x|^2}f)
		< S_\omega(Q_\omega).
\end{align*}
Then, the following is true:
\begin{enumerate}
\item
If
$
	K(e^{-\lambda_0 i|x|^2}f)
		> 0,
$
then
$
	K(e^{-\lambda i|x|^2}f)
		> 0
$
as long as
$
	S_\omega(e^{-\lambda i|x|^2}f)
		< S_\omega(Q_\omega).
$
\item
If
$
	K(e^{-\lambda_0 i|x|^2}f)
		< 0,
$
then
$
	K(e^{-\lambda i|x|^2}f)
		< 0
$
as long as
$
	S_\omega(e^{-\lambda i|x|^2}f)
		< S_\omega(Q_\omega).
$
\end{enumerate}
\end{lemma}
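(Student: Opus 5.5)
The plan is a continuity argument in the parameter $\lambda$. The key observation is that, as functions of $\lambda$, both quantities are explicit quadratic polynomials. Indeed, $M$ and $N$ are invariant under $f \mapsto e^{-\lambda i|x|^2}f$, and by the formula in the proof of Proposition \ref{Equivalent conditions},
\begin{align*}
	L(e^{-\lambda i|x|^2}f)
		= L(f) + 4\lambda^2\|xf\|_{L^2}^2 - 4\lambda\<ixf,\nabla f\>_{L^2}.
\end{align*}
Hence $\lambda \mapsto S_\omega(e^{-\lambda i|x|^2}f)$ and $\lambda \mapsto K(e^{-\lambda i|x|^2}f)$ are continuous convex quadratics in $\lambda$. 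It follows that the set
\begin{align*}
	\Lambda := \{\lambda \in \R : S_\omega(e^{-\lambda i|x|^2}f) < S_\omega(Q_\omega)\}
\end{align*}
is an open interval containing $\lambda_0$ (convexity makes it connected), possibly $\R$ when $xf = 0$, i.e.\ $f=0$, a trivial case excluded by $K \neq 0$.

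Next I use the variational characterization of $Q_\omega$. For every $\lambda \in \Lambda$ we have $K(e^{-\lambda i|x|^2}f) \neq 0$. Otherwise $g := e^{-\lambda i|x|^2}f \in H^1 \setminus \{0\}$ (note $g\neq 0$ since $K(e^{-\lambda_0 i|x|^2}f)\neq 0$ forces $f\neq0$) would satisfy $K(g)=0$ and $S_\omega(g) < S_\omega(Q_\omega) = n_\omega$, contradicting \eqref{nw}--\eqref{MP}; equivalently, $n_{\omega,\lambda} = S_\omega(Q_\omega)$ as computed at the start of Section \ref{Sec: Equivalence of conditions}.

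Then the continuous function $\lambda \mapsto K(e^{-\lambda i|x|^2}f)$ has no zero on the connected set $\Lambda$, so by the intermediate value theorem its sign on $\Lambda$ equals its sign at $\lambda_0$. This gives (1) and (2) simultaneously, where ``as long as'' is read as $\lambda$ ranging over $\Lambda$.

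The only point requiring care, and the one I expect to be the main obstacle, is the connectedness of $\Lambda$: the conclusion concerns all $\lambda$ with $S_\omega(e^{-\lambda i|x|^2}f) < S_\omega(Q_\omega)$, not merely those reachable continuously from $\lambda_0$. This is exactly where convexity of the quadratic (positive leading coefficient $2\|xf\|_{L^2}^2$) is used: the sublevel set of a convex function is an interval, so every admissible $\lambda$ is joined to $\lambda_0$ within $\Lambda$.
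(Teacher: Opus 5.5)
Your proposal is correct and follows the paper's argument: the set $\{\lambda : S_\omega(e^{-\lambda i|x|^2}f) < S_\omega(Q_\omega)\}$ is an open interval, $K(e^{-\lambda i|x|^2}f)$ cannot vanish on it by the variational characterization \eqref{MP} of $Q_\omega$, and continuity then forces the sign at $\lambda_0$ to persist. You also justify the interval property explicitly, via convexity of the quadratic in $\lambda$ with leading coefficient $2\|xf\|_{L^2}^2$, which the paper only asserts.
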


\begin{proof}
We will show only (1) since (2) can be proved in the same manner.
Since
\begin{align*}
	I_\lambda
		:= \{\lambda \in \R : S_\omega(e^{-\lambda i|x|^2}f) < S_\omega(Q_\omega)\}
\end{align*}
is a open interval,
$
	\{K(e^{-\lambda i|x|^2}f) : \lambda \in I_\lambda\}
$
is path-connected.
Therefore, if we deny the desired result, then there exists
$
	\lambda_\ast
		\in I_\lambda
$
such that
$
	K(e^{-\lambda_\ast i|x|^2}f)
		= 0.
$
On the other hand, it follows from the variational characteristic of
$
	Q_\omega
$
\eqref{MP}
that
$
	S_\omega(e^{-\lambda_\ast i|x|^2}f)
		\geq S_\omega(Q_\omega).
$
This is contradiction.
\end{proof}

Next corollary immediately follows from this lemma.

\begin{corollary}
If there exists
$
	(\omega, \lambda)
		\in (0,\infty) \times \R
$
such that
$
	S_\omega(e^{-\lambda i|x|^2}f)
		< S_\omega(Q_\omega)
$
and
$
	K(e^{-\lambda i|x|^2}f)
		> 0.
$
Then, we have
$
	K(e^{-\lambda_{\min} i|x|^2}f)
		> 0,
$
where
$
	\lambda_{\min}
$
was defined in the proof of Proposition \ref{Equivalent conditions}.
\end{corollary}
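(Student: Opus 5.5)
The corollary follows from Lemma \ref{Persistence of sign}(1) once we show that $\lambda_{\min}$ lies in the admissible set
\begin{align*}
	I_\lambda
		= \{\lambda \in \R : S_\omega(e^{-\lambda i|x|^2}f) < S_\omega(Q_\omega)\}.
\end{align*}
Apply the lemma with $\lambda_0 := \lambda$, the parameter given in the hypothesis. By assumption, $S_\omega(e^{-\lambda_0 i|x|^2}f) < S_\omega(Q_\omega)$ and $K(e^{-\lambda_0 i|x|^2}f) > 0$. The lemma then yields $K(e^{-\mu i|x|^2}f) > 0$ for every $\mu \in I_\lambda$. It therefore suffices to check that $\lambda_{\min} \in I_\lambda$.

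For this, use the computation from the proof of Proposition \ref{Equivalent conditions}. Since $M$ and $N$ are invariant under multiplication by $e^{-\mu i|x|^2}$, and
\begin{align*}
	L(e^{-\mu i|x|^2}f)
		= L(f) + 4\mu^2\|xf\|_{L^2}^2 - 4\mu\<ixf,\nabla f\>_{L^2},
\end{align*}
the map $\mu \mapsto S_\omega(e^{-\mu i|x|^2}f)$ is a quadratic polynomial in $\mu$ with positive leading coefficient $2\|xf\|_{L^2}^2$. Here $\|xf\|_{L^2} > 0$, since $f \neq 0$; indeed $K(e^{-\lambda i|x|^2}f) > 0$ forces $f \neq 0$. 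This polynomial attains its minimum exactly at $\lambda_{\min} = \frac{\<ixf,\nabla f\>_{L^2}}{2\|xf\|_{L^2}^2}$. Hence
\begin{align*}
	S_\omega(e^{-\lambda_{\min} i|x|^2}f)
		\leq S_\omega(e^{-\lambda i|x|^2}f)
		< S_\omega(Q_\omega),
\end{align*}
so $\lambda_{\min} \in I_\lambda$. Lemma \ref{Persistence of sign}(1) then gives $K(e^{-\lambda_{\min} i|x|^2}f) > 0$.

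The only point requiring care is that $I_\lambda$ is an interval containing both $\lambda$ and $\lambda_{\min}$. This holds because $I_\lambda$ is a sublevel set of a convex quadratic, so it is an open interval around its minimizer. That interval structure is exactly what the connectedness argument in the lemma uses, so no additional obstacle arises.
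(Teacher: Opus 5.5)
Your proposal is correct and follows essentially the same route as the paper: the paper's proof is exactly the observation $S_\omega(e^{-\lambda_{\min} i|x|^2}f) \leq S_\omega(e^{-\lambda i|x|^2}f) < S_\omega(Q_\omega)$ combined with Lemma \ref{Persistence of sign}(1). You additionally verify that $\|xf\|_{L^2} > 0$ and that $\lambda_{\min}$ is the vertex of the quadratic, which the paper leaves implicit.
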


\begin{proof}
Combining the fact
$
	S_\omega(e^{-\lambda_{\min} i|x|^2}f)
		\leq S_\omega(e^{-\lambda i|x|^2}f)
		< S_\omega(Q_\omega)
$
and Lemma \ref{Persistence of sign}, the corollary follows.
\end{proof}

Here, we prove Corollary \ref{Weaker Th}.
We recall that we have already gotten Theorem \ref{S vs B}.

\begin{proof}[Proof of Corollary \ref{Weaker Th}]
We will show only global existence since the blow-up result is proved by the same argument.
We aim to prove
$
	\<ixu_0,\nabla u_0\>_{L^2}
		\geq 0
$
under the conditions of scattering in Corollary \ref{Weaker Th}.

In the case of
$
	S_\omega(u_0)
		< S_\omega(Q_\omega),
$
the sign of
$
	K(u_0)
$
corresponds to that of
$
	K(e^{-\lambda i|x|^2}u_0).
$
Indeed, if
$
	S_\omega(e^{-\lambda i|x|^2}u_0)
		< S_\omega(Q_\omega)
$
and
$
	K(e^{-\lambda i|x|^2}u_0)
		> 0,
$
then Lemma \ref{Persistence of sign} deduces the desired fact.
On the other hand, when
$
	S_\omega(e^{-\lambda i|x|^2}u_0)
		= S_\omega(Q_\omega)
$
and
$
	K(e^{-\lambda i|x|^2}u_0)
		> 0
$
hold, it can fall down
$
	S_\omega(e^{-\lambda i|x|^2}u_0)
		< S_\omega(Q_\omega)
$
and
$
	K(e^{-\lambda i|x|^2}u_0)
		> 0
$
by shifted
$
	\lambda
$
slightly.
Hence, the desired fact can be gotten in the both cases and Theorem \ref{Below} and Remark \ref{AkaNawCon} determine the time behavior of solutions without regard to the sign of
$
	\lambda.
$

We assume that
$
	S_\omega(u_0)
		\geq S_\omega(Q_\omega).
$
We consider a function
\begin{align*}
	h(\lambda)
		:= S_\omega(e^{-\lambda i|x|^2}u_0)
		= 2\lambda^2\|xu_0\|_{L^2}^2 - 2\lambda\<ixu_0,\nabla u_0\>_{L^2} + S_\omega(u_0).
\end{align*}
Then, we have
$
	h(0)
		\geq S_\omega(Q_\omega)
$
and the set
$
	\mathcal{S}
		:= \{\lambda \in \R : h(\lambda) \leq S_\omega(Q_\omega)\}
$
has a positive element from the assumptions.
Therefore, the axis of
$
	h
$
must be positive, that is,
$
	\<ixu_0,\nabla u_0\>_{L^2}
		> 0.
$
Indeed, if it is non-positive, the set
$
	\mathcal{S}
$
is included in
$
	(-\infty,0],
$
which is contradiction.
\end{proof}

Using these preparations, we show the global existence and blow-up results in Theorem \ref{S vs B}.
We write clearly claim shown  here.

\begin{theorem}\label{G vs B}
Let
$
	u_0
		\in H^1(\R^d)
$
satisfy
$
	xu_0
		\in L^2(\R^d).
$
Assume that
\begin{align}\label{109}
	\text{there exist }
	(\omega, \lambda)
		\in (0,\infty) \times \R
	\text{ such that }
	S_\omega(e^{-\lambda i|x|^2}u_0)
		\leq S_\omega(Q_\omega).
\end{align}
\begin{itemize}
\item
(Global existence)
If
$
	K(e^{-\lambda i|x|^2}u_0)
		> 0
$
and
$
	\<ixu_0,\nabla u_0\>_{L^2}
		\geq 0,
$
then the solution
$
	u
$
to \eqref{NLS} exists globally in the forward time.
\item
(Blow-up)
If
$
	K(e^{-\lambda i|x|^2}u_0)
		< 0
$
and
$
	\<ixu_0,\nabla u_0\>_{L^2}
		\leq 0,
$
then the solution
$
	u
$
to \eqref{NLS} blows up in the forward time.
\end{itemize}
\end{theorem}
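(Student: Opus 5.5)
The plan is a virial argument for $V(t):=\|xu(t)\|_{L^2}^2$, run on the quantity that measures condition (3) of Proposition \ref{Equivalent conditions} along the flow. With the paper's convention for $\<\cdot,\cdot\>_{L^2}$, the standard identities are
\begin{align*}
V'(t)=4\<ixu(t),\nabla u(t)\>_{L^2}, \qquad V''(t)=4K(u(t)).
\end{align*}
From the formula for $L(e^{-\lambda i|x|^2}f)$, the identity $h(\lambda)=2\lambda^2V-\tfrac{\lambda}{2}V'+S_\omega(u)$ holds at each time. Its minimum is attained at $\lambda_{\min}(t)=V'/(8V)$, with value $S_\omega(u)-V'(t)^2/(32V(t))$. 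Since $K=4(S_\omega-T_\omega)$ by \eqref{142} and $S_\omega(u(t))$ is conserved, I will also use
\begin{align*}
V''(t)-16\delta=16\{T_\omega(Q_\omega)-T_\omega(u(t))\}, \qquad \delta:=S_\omega(u_0)-S_\omega(Q_\omega).
\end{align*}
If $\delta<0$ or the threshold case reduces to below the ground state, Theorem \ref{Below} together with Remark \ref{AkaNawCon} and the argument in the proof of Corollary \ref{Weaker Th} already settle matters. So I assume $\delta\ge0$, and the main case is $\delta>0$.

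Next I set $F(t):=V'(t)^2-32\delta V(t)$. Condition \eqref{109} at time $t$ (with $\lambda=\lambda_{\min}(t)$) is exactly $F(t)\ge0$, and $F'=2V'(V''-16\delta)$. In the global existence case, \eqref{109} gives $F(0)\ge0$, while $\<ixu_0,\nabla u_0\>\ge0$ gives $V'(0)\ge0$. Proposition \ref{Prop: K, MN, and T}, via Remark \ref{Rem: K, MN, and T}, gives $T_\omega(u_0)<T_\omega(Q_\omega)$, i.e. $V''(0)>16\delta$. The bootstrap claim is that on $[0,T_{\max})$ we have $V'\ge0$, $F\ge0$ and $T_\omega(u(t))<T_\omega(Q_\omega)$. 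While these hold, $V''>16\delta\ge0$, so $V'$ increases, and then $F'\ge0$, so $F$ does not decrease. The condition $F\ge0$ is exactly the hypothesis of Proposition \ref{Prop: K, MN, and T} at time $t$, so the sign of $T_\omega(u(t))-T_\omega(Q_\omega)$ can only change through equality. Equality corresponds to $K(e^{-\lambda_{\min} i|x|^2}u(t))=0$ by Remark \ref{Rem: K, MN, and T}.

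The main obstacle is excluding that equality, i.e. closing the bootstrap. For $t>0$ I expect strictness: $V'$ has strictly increased, so $V'(t)>0$ and $F(t)>F(0)\ge0$. Hence $S_\omega(e^{-\lambda_{\min} i|x|^2}u(t))<S_\omega(Q_\omega)$, and the variational characterization \eqref{MP}, as in Lemma \ref{Persistence of sign}, forbids $K(e^{-\lambda_{\min} i|x|^2}u(t))=0$. A continuity argument then closes the bootstrap. Given it, $T_\omega(u(t))<T_\omega(Q_\omega)$ bounds $N(u(t))$ uniformly, and energy conservation gives $L(u(t))=2E(u_0)+\tfrac{2}{p+1}N(u(t))\lesssim1$. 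The blow-up criterion then yields $T_{\max}=\infty$.

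For blow-up the signs reverse. We have $V'(0)\le0$, $F(0)\ge0$ and $T_\omega(u_0)>T_\omega(Q_\omega)$. If $V'(0)=0$, then $F(0)=-32\delta V(0)<0$ when $\delta>0$, so in fact $V'(0)<0$. The same bootstrap gives $V''<16\delta$, $V'<0$ and $F$ non-decreasing, hence $-V'\ge\sqrt{32\delta V}$. Then $z:=\sqrt V$ satisfies $z'=V'/(2z)\le-\sqrt{8\delta}<0$, so $z$ reaches zero in finite time. This is impossible, since $V>0$ by mass conservation and the uncertainty principle. Therefore $T_{\max}<\infty$. In the degenerate case $\delta=0$, I would instead use $V''<0$ together with strictness for $t>0$ (a quantitative bound from Lemma \ref{Estimate of K at t=0}) to get $V''\le-c$.
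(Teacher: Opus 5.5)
Your argument is correct and is essentially the paper's proof in different bookkeeping. Since $F=V'^2-32\delta V=4V\{(z')^2-8\delta\}$ with $z=\sqrt{V}$, propagating $F\ge0$ is the same as propagating \eqref{110} in time. Under $F\ge0$, Proposition \ref{Prop: K, MN, and T} makes your bootstrapped sign of $V''-16\delta=16\{T_\omega(Q_\omega)-T_\omega(u(t))\}$ coincide with the paper's sign of $z''=\frac{2}{z}K(e^{-\lambda_{\min}(t)i|x|^2}u(t))$. In both versions the bootstrap is closed by \eqref{MP} once $F(t)>0$ for $t>0$. Reading off the initial sign from Remark \ref{Rem: K, MN, and T} is slightly cleaner than the paper's route through \eqref{113} and Lemma \ref{Persistence of sign}.

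One small correction concerns the blow-up case with $\delta=0$. Lemma \ref{Estimate of K at t=0} only gives the strict inequality $K(u(t))<0$, not a uniform bound $V''\le -c$. No such bound is needed, though. From $V''<0$ on $[0,T_{\max})$ you get $V'(t_1)<0$ for some $t_1>0$, hence $V(t)\le V(t_1)+V'(t_1)(t-t_1)$. This contradicts $V>0$ if $T_{\max}=\infty$, which is exactly how the paper concludes.
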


\begin{proof}
Let
$
	V(t)
		= \|xu(t)\|_{L^2}^2.
$
Then, we have
\begin{align*}
	V'(t)
		= 4\<ixu(t),\nabla u(t)\>_{L^2}
	\ \text{ and }\ 
	V''(t)
		= 4K(u(t))
\end{align*}
(see \cite[Proposition 6.5.1]{Caz03}).
In addition, we define
$
	z(t)
		= \sqrt{V(t)}.
$
Then, we have
\begin{align*}
	z'(t)
		& = \frac{V'(t)}{2\sqrt{V(t)}}
		= \frac{2\<ixu(t),\nabla u(t)\>_{L^2}}{\|xu(t)\|_{L^2}}, \\
	z''(t)
		& = \frac{1}{z(t)}\[\frac{1}{2}V''(t) - \{z'(t)\}^2\]
		= \frac{2}{\|xu(t)\|_{L^2}}K(e^{-\lambda_{\min}(t) i|x|^2}u(t)),
\end{align*}
where
$
	\lambda_{\min}(t)
		:= \frac{\<ixu(t),\nabla u(t)\>_{L^2}}{2\|xu(t)\|_{L^2}^2}.
$
We recall that
$
	\lambda_{\min}(t)
$
is the minimum point of
\begin{align*}
	\min_{\lambda \in \R}L(e^{-\lambda i|x|^2}u(t)).
\end{align*}
By the same reason in the proof of Corollary \ref{Weaker Th}, we assume that
$
	S_\omega(u_0)
		\geq S_\omega(Q_\omega)
$
in the proof hereafter.
Since
\begin{align*}
	S_\omega(e^{-\lambda_{\min}(0) i|x|^2}u_0)
		= S_\omega(u_0) - \frac{1}{8}\{z'(0)\}^2,
\end{align*}
the condition \eqref{109} is rewritten as
\begin{align}\label{110}
	\{z'(0)\}^2
		\geq 8\{S_\omega(u_0) - S_\omega(Q_\omega)\}
\end{align}
through Proposition \ref{Equivalent conditions}.

(Blow-up)
\eqref{110} and
$
	z'(0)
		\leq 0
$
imply
\begin{align*}
	z'(0)
		\leq - \sqrt{8\{S_\omega(u_0) - S_\omega(Q_\omega)\}}
		\leq 0.
\end{align*}
In addition, it follows from Lemma \ref{Estimate of K at t=0} that
\begin{align*}
	K(u_0)
		< 4\{S_\omega(u_0) - S_\omega(Q_\omega)\}.
\end{align*}
Combining these inequalities, we see
\begin{align*}
	z''(0)
		< \frac{1}{z(0)}\[8\{S_\omega(u_0) - S_\omega(Q_\omega)\} - 8\{S_\omega(u_0) - S_\omega(Q_\omega)\}\]
		= 0.
\end{align*}
Here, we will prove that
$
	z''(t)
		< 0
$
for any
$
	t
		\in [0,T_{\max})
$
by the contradiction.
If not, then there exists
$
	t_0
		\in (0,T_{\max})
$
such that
$
	z''(t)
		< 0
$
for any
$
	t
		\in [0,t_0)
$
and
$
	z''(t_0)
		= 0
$
(that is,
$
	K(e^{-\lambda_{\min}(t_0) i|x|^2}u(t_0))
		= 0
$).
Combining
$
	z'(0)
		\leq 0
$
and
$
	z''(t)
		< 0
$
on
$
	t
		\in [0,t_0),
$
we have
$
	- \{z'(t)\}^2
$
is decreasing strictly on
$
	t
		\in [0,t_0].
$
That is,
\begin{align*}
	S_\omega(e^{-\lambda_{\min}(t) i|x|^2}u(t))
		< S_\omega(e^{-\lambda_{\min}(0) i|x|^2}u_0)
		\leq S_\omega(Q_\omega)
\end{align*}
holds on
$
	t
		\in (0,t_0]
$
and hence, we obtain
\begin{align}\label{190}
	K(e^{-\lambda_{\min}(t) i|x|^2}u(t))
		< 0
\end{align}
on
$
	t
		\in [0,t_0]
$
by the variational characterization of
$
	Q_\omega.
$
Indeed, if not, there exists
$
	t_1
		\in (0,t_0)
$
such that
\begin{align*}
	K(e^{-\lambda_{\min}(t_1) i|x|^2}u(t_1))
		= 0.
\end{align*}
Hence, we have
\begin{align*}
	S_\omega(Q_\omega)
		\leq S_\omega(e^{-\lambda_{\min}(t_1) i|x|^2}u(t_1))
\end{align*}
which is contradiction and we get \eqref{190}.
However, \eqref{190} contradicts the taking of
$
	t_0.
$
Therefore, it is proved that
$
	z''(t)
		< 0
$
on
$
	t
		\in [0,T_{\max}).
$

Finally, we suppose
$
	T_{\max}
		= \infty
$
to complete the proof of the blow-up.
$
	z''(t)
		< 0
$
implies that
$
	z'(t)
		< z'(1)
		< 0
$
on
$
	t
		\in (1,\infty),
$
which contradicts
$
	z(t)
		\geq 0.
$

(Global existence)
We claim that
$
	z''(0)
		> 0
$
and will prove it by contradiction.
If
$
	z''(0)
		= 0, 
$
then
$
	K(e^{-\lambda_{\min}(0) i|x|^2}u_0)
		= 0
$
and hence,
\begin{align}\label{113}
	S_\omega(Q_\omega)
		\leq S_\omega(e^{-\lambda_{\min}(0) i|x|^2}u_0)
		< S_\omega(e^{-\lambda i|x|^2}u_0)
	\text{ for any }
	\lambda
		\in \R \setminus \{\lambda_{\min}(0)\},
\end{align}
which contradicts the assumptions of the theorem.
On the other hand, if
$
	z''(0)
		< 0, 
$
then
$
	K(e^{-\lambda_{\min}(0) i|x|^2}u_0)
		< 0.
$
The case of
$
	S_\omega(e^{-\lambda_{\min}(0) i|x|^2}u_0)
		< S_\omega(Q_\omega)
$
is incorrect from Lemma \ref{Persistence of sign}.
The case of
$
	S_\omega(Q_\omega)
		\leq S_\omega(e^{-\lambda_{\min}(0) i|x|^2}u_0)
$
does not happen by the same reason as \eqref{113}.
In addition,
$
	z''(t)
		> 0
$
is extended to
$
	[0,T_{\max})
$
by a similar argument to the blow-up case.

$
	z''(t)
		> 0
$
on
$
	t
		\in [0,T_{\max})
$
implies
$
	K(u(t))
		> \frac{1}{2}\{z(t)\}^2
		\geq 0.
$
That is, we have
\begin{align*}
	\frac{1}{2}L(u(t))
		& < E(u_0) + \frac{1}{p+1}N(u(t)) + \frac{1}{d(p-1)}K(u(t))
		= E(u_0) + \frac{2}{d(p-1)}L(u(t))
\end{align*}
and hence,
\begin{align}\label{181}
	\left\{\frac{1}{2} - \frac{2}{d(p-1)}\right\}L(u(t))
		< E(u_0)
\end{align}
for any
$
	t
		\in [0,T_{\max}),
$
which completes the proof of the global existence.
\end{proof}

\begin{remark}\label{Uniformly bounded}
The global solutions in Theorem \ref{S vs B} satisfies
\begin{align*}
	\sup_{t \in [t_0,\infty)}T_\omega(u(t))
		< S_\omega(Q_\omega)
\end{align*}
for any
$
	t_0
		\in (0,\infty),
$
where
$
	T_\omega
$
is defined in \eqref{142}.
Indeed, it follows from
$
	z''(t)
		> 0
$
for any
$
	t
		\in [0,\infty)
$
that
$
	S_\omega(e^{-\lambda_{\min}(t) i|x|^2}u(t))
$
is decreasing strictly on
$
	[0,\infty)
$
and hence, we have
\begin{align*}
	\sup_{t \in [t_0,\infty)}T_\omega(u(t))
		\leq \sup_{t \in [t_0,\infty)}S_\omega(e^{-\lambda_{\min}(t) i|x|^2}u(t))
		= S_\omega(e^{-\lambda_{\min}(t_0) i|x|^2}u(t_0))
		< S_\omega(Q_\omega).
\end{align*}
\end{remark}

\section{Scattering}\label{Sec: S}

In this section, we show the scattering result in Theorem \ref{S vs B}.
Under the conditions of scattering in Theorem \ref{S vs B}, we have already shown the global existence satisfying the inequality in Remark \ref{Uniformly bounded}.
Therefore, if we prove the following scattering criterion, then we complete the proof of Theorem \ref{S vs B}.

\begin{theorem}[Scattering criterion]\label{S criterion}
Let
$
	u
		\in C_t([0,\infty);H_x^1(\R^d))
$
be a forward time global solution to \eqref{NLS}.
If there exists
$
	\omega
		> 0
$
such that
\begin{align*}
	\sup_{t \geq 0}T_\omega(u(t))
		< S_\omega(Q_\omega),
\end{align*}
then
$
	u
$
scatters in the forward time.
\end{theorem}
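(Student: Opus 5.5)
We argue by concentration--compactness and rigidity in the style of Kenig--Merle, adapted to the functional $T_\omega$ as in Akahori--Nawa \cite{AkaNaw13}. First we record the consequences of the hypothesis. Since $T_\omega(f)=\frac{\omega}{2}M(f)+\frac{s_c(p-1)}{2(p+1)}N(f)$, the assumption $\sup_{t\ge0}T_\omega(u(t))<S_\omega(Q_\omega)$ bounds both $M(u(t))$ and $N(u(t))$ uniformly. By Proposition \ref{Prop: K, MN, and T} (or rather the Gagliardo--Nirenberg/Young argument used in its step (2)$\Rightarrow$(3)), this gives
\[
M(u(t))^{1-s_c}N(u(t))^{s_c}\le (1-\delta)M(Q_1)^{1-s_c}N(Q_1)^{s_c}
\]
for some $\delta>0$. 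Combined with energy conservation, $L(u(t))=2E(u_0)+\frac{2}{p+1}N(u(t))$ is uniformly bounded, so $\sup_t\|u(t)\|_{H^1}<\infty$. By Proposition \ref{S-norm}, it then suffices to show $\|u\|_{S([0,\infty))}<\infty$.

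Next we set up the induction on the size of $T_\omega$. For $0<m\le S_\omega(Q_\omega)$, let $\mathcal{T}(m)$ be the statement that every forward-global solution with $\sup_{t\ge0}T_\omega(u(t))<m$ has finite $S$-norm on $[0,\infty)$. Proposition \ref{SDS} gives $\mathcal{T}(m)$ for small $m$, since small $T_\omega$ gives small mass and $N$, hence small $H^1$ norm via the energy bound. Let $m_c$ be the supremum of the admissible $m$, and suppose $m_c<S_\omega(Q_\omega)$. Take a sequence $u_n$ with $\sup_t T_\omega(u_n(t))\to m_c$ and $\|u_n\|_{S([0,\infty))}\to\infty$, uniformly bounded in $H^1$. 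Apply the linear profile decomposition (Lemma \ref{LPD}) to $u_n(0)$. Both $M$ and $N$ decouple asymptotically, the latter for profiles along times $t_n^j$ with $|t_n^j|\to\infty$ contributing zero. Hence $T_\omega$ decouples as well. This is exactly why the criterion is phrased via $T_\omega$ rather than $K$: $T_\omega$ is a positive combination of $M$ and $N$, so it is additive and monotone under profile splitting.

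We then build nonlinear profiles, using Theorem \ref{Below} or the small-data theory for profiles sitting at $t_n^j\to\pm\infty$. Each such profile has strictly smaller $T_\omega$-level and therefore scatters by the inductive hypothesis. One point needs care here: the hypothesis bounds $T_\omega$ along the whole forward flow, not just at time $0$. We must check that each nonlinear profile also satisfies $\sup_t T_\omega<m_c$. We expect to do this by passing the uniform-in-$t$ bound through the decomposition, using the orthogonality of $T_\omega(u_n(t))$ at each fixed $t$ together with Proposition \ref{LTP}. Long-time perturbation (Proposition \ref{LTP}) then forces a single profile with $t_n^1$ bounded. This yields a critical element $u_c$ with $\sup_t T_\omega(u_c(t))\le m_c$, infinite $S$-norm, and $\{u_c(t,\cdot-x(t))\}_{t\ge0}$ precompact in $H^1$.

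The main obstacle is rigidity: ruling out $u_c$ requires a coercive virial. By \eqref{136}, any $f\ne0$ with $K(f)\le0$ has $T_\omega(f)\ge S_\omega(Q_\omega)$. Since $T_\omega(u_c(t))\le m_c<S_\omega(Q_\omega)$, we get $K(u_c(t))>0$. We then upgrade this to the uniform bound $K(u_c(t))\gtrsim N(u_c(t))$, in the spirit of the coercivity Lemma \ref{Coercivity}, by scaling $f_\mu=e^{d\mu}f(e^{2\mu}\cdot)$ and using the gap $m_c<S_\omega(Q_\omega)$. Precompactness bounds $N(u_c(t))$ below away from zero. Finally, the localized virial identity with a cutoff at radius $R$, combined with the control on $x(t)$ coming from momentum zero (after a Galilean boost), gives $\int_0^T K(u_c)\,dt\lesssim R$ against a lower bound $\gtrsim T$, a contradiction. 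We expect the coercivity step and the handling of $x(t)$ to be the most delicate parts.
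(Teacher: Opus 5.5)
There is a genuine gap, and it lies in the choice of induction parameter. $T_\omega=\frac{\omega}{2}M+\frac{s_c(p-1)}{2(p+1)}N$ does not control the kinetic energy $L$, and nothing else in your class $\{\sup_{t\ge0}T_\omega(u(t))<m\}$ does either. Energy conservation bounds $L(u(t))$ for a single solution, but only in terms of that solution's own $E(u_0)$, which is unconstrained across the class. Concretely, the Galilean boost $u\mapsto e^{i(x\cdot\xi-t|\xi|^2)}u(t,x-2t\xi)$ leaves $M$, $N$, $\sup_tT_\omega$ and all $S$-norms unchanged, while $L\to\infty$ as $|\xi|\to\infty$.

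This breaks the argument in three places:
\begin{itemize}
\item Your assertion that the almost-critical sequence is ``uniformly bounded in $H^1$'' is unjustified. So is your base case: small $T_\omega$ does \emph{not} give a small $H^1$ norm. Without an $H^1$ bound, Lemma \ref{LPD} cannot be applied, since it requires $H^1$-bounded data and has no frequency parameters.
\item Even if you add an energy bound, minimality of the $T_\omega$-level only forces the single-profile remainder to vanish in $L^2$. Your mass-gap argument gives $M(U_n)\to0$, hence smallness in $H^s$ for $s<1$ and in $L^{p+1}$, but not $L(U_n)\to0$.
\item Consequently the claimed $H^1$-precompactness of $\{u_c(t,\cdot-x(t))\}$ does not follow. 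That precompactness is what makes the tail $\int_{|x|>R}|\nabla u_c|^2$ in the localized virial uniformly small; with a compactly supported cutoff this term has the bad sign.
\end{itemize}

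The missing idea is the paper's two-parameter induction. Fix $B\in(0,S_\omega(Q_\omega))$ and induct on the action $A\ge S_\omega(u_0)$ among solutions with $\sup_tT_\omega(u(t))\le B$.
\begin{itemize}
\item This gives the uniform bound $\omega M+L(u(t))\le 2A+\frac{4}{s_c(p-1)}B$.
\item Lemma \ref{Coercivity} ($T_\omega\le B\Rightarrow K\ge CL$ and $E\ge\frac C4L$) makes $S_\omega$ nonnegative and asymptotically additive over profiles.
\item In the single-profile case, minimality $S_\omega(u^1)=S_\omega^c(B)$ forces $S_\omega(U_n)\to0$. Since $T_\omega(U_n)<S_\omega(Q_\omega)$ gives $K(U_n)>0$ by \eqref{136}, this yields $\|U_n\|_{H^1}\to0$, i.e.\ genuine $H^1$-compactness.
\item The base case $A,B<S_\omega(Q_\omega)$ comes from Theorem \ref{Below}.
\end{itemize}
Your remaining ingredients fit into that framework unchanged: the mass gap for multiple profiles, coercivity via \eqref{136} or scaling, zero momentum, and the localized virial.
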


We define
\begin{align*}
	S_\omega^c(A,B)
		:=
		\sup\left\{
		\|u\|_{S([0,\infty))}
		\left|
		\begin{array}{l}
		\text{The solution }
		u
			\in C_t([0,\infty);H_x^1(\R^d))
		\text{ satisfies } \\
		S_\omega(u_0)
			\leq A
		\text{ and }
		\sup_{t \in [0,\infty)}T_\omega(u(t))
			\leq B.
		\end{array}
		\right.
		\right\}
\end{align*}
for
$
	A, B
		> 0.
$
Then, we would like to show the following:
\begin{theorem}\label{Scattering criterion}
Suppose that
$
	A
		> 0
$
and
$
	0
		< B
		< S_\omega(Q_\omega).
$
Then, we have
$
	S_\omega^c(A,B)
		< \infty.
$
\end{theorem}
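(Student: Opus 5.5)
We argue by concentration compactness in the Kenig--Merle style, with $B$ as the induction parameter and $A$ fixed. First note that $T_\omega(u(t))\le B$ gives $\frac{\omega}{2}M(u_0)\le B$ and $N(u(t))\lesssim B$ uniformly. Combined with $S_\omega(u_0)\le A$ and energy conservation, this yields $\sup_t L(u(t))\lesssim A+B$. Hence the solutions in the definition of $S_\omega^c(A,B)$ are uniformly bounded in $H^1$.

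If $B$ is small, the Gagliardo--Nirenberg inequality \eqref{G-N inequality} together with the smallness of $M(u_0)$ makes $\|e^{it\Delta}u_0\|_{S([0,\infty))}$ small. Indeed, $S$ is built from $\dot H^{s_c}$-admissible pairs, so this norm is controlled by $\|u_0\|_{L^2}^{1-s_c}\|u_0\|_{\dot H^1}^{s_c}$, with the $L^2$ factor small. Proposition \ref{SDS} then gives $S_\omega^c(A,B)<\infty$. So we set
\[
B^\ast:=\sup\{B>0 : S_\omega^c(A,B)<\infty\}>0
\]
and suppose for contradiction that $B^\ast<S_\omega(Q_\omega)$.

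Take solutions $u_n$ with $S_\omega(u_n(0))\le A$, $\sup_t T_\omega(u_n(t))\le B_n\downarrow B^\ast$ and $\|u_n\|_{S([0,\infty))}\to\infty$. Apply the linear profile decomposition (Lemma \ref{LPD}) to the bounded sequence $u_n(0)$, using profiles with time shifts $t_n^j$ and space translations only, with no scaling since we are at $H^1$ level. We then need Pythagorean expansions of $M$, of $N$ (along the time translations), and hence of $T_\omega$.

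This is the point where $T_\omega$ is used instead of $K$. Since $T_\omega=\frac{\omega}{2}M+c\,N$ has only nonnegative terms, it decouples: at every time, $T_\omega(u_n(t))$ is asymptotically at least $\sum_j T_\omega(\text{nonlinear profile}_j(t))$. Because no gradient appears, no coercivity of $K$ is needed in this step. Consequently each nonlinear profile $v^j$ satisfies $\sup_t T_\omega(v^j)\le B^\ast$. If there are at least two nonzero profiles, each has $\sup_t T_\omega\le B^\ast-\delta$ with a uniform $\delta>0$ coming from the mass or $N$ of the other profiles. A profile with $t_n^j\to-\infty$ is handled via wave operators and the decay of $N$ at $-\infty$, hence small $T_\omega$ contribution there; one must still check that it stays below $B^\ast$ for later times, again by decoupling.

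By the definition of $B^\ast$, every profile then has finite $S$-norm. Since $S_\omega(v^j)$ is also controlled, by $A$ plus bounded terms, the long time perturbation Proposition \ref{LTP} gives a uniform bound on $\|u_n\|_S$, which is a contradiction. Therefore there is exactly one profile and the remainder tends to $0$. This yields a critical solution $u_c$ with $\sup_{t\ge0}T_\omega(u_c(t))\le B^\ast$ (so it is global by Remark \ref{Uniformly bounded}-type bounds), $\|u_c\|_{S([0,\infty))}=\infty$, and precompact forward orbit modulo translations $x(t)$.

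The main obstacle is the rigidity step that excludes $u_c$. The natural tool is a localized virial identity $\frac{d}{dt}\mathrm{Im}\int\phi_R\,\bar u\nabla u=2K(u)+\text{error}$, which requires a uniform positive lower bound for $K(u_c(t))$. We would try to derive it from $T_\omega(u_c(t))\le B^\ast<S_\omega(Q_\omega)$ via \eqref{136}. Since $T_\omega(Q_\omega)=\inf\{T_\omega : K\le0\}$, we get $K(u_c(t))>0$. A quantitative bound $K(u_c(t))\gtrsim N(u_c(t))$ should follow from the same minimization applied to scaled functions, in the style of Lemma \ref{Coercivity} and Appendix \ref{Alternative}. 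We also need $N(u_c(t))$ bounded below, which holds because $u_c\neq0$ and the orbit is compact. The spatial center $x(t)$ is controlled by zero momentum, after a Galilean reduction; note that the Galilean transform leaves $M$ and $N$ invariant, so $T_\omega$ is unaffected and the reduction is harmless. Integrating the virial identity in time then contradicts the boundedness of the virial quantity, so $B^\ast=S_\omega(Q_\omega)$, proving Theorem \ref{Scattering criterion} and hence Theorem \ref{S criterion}.
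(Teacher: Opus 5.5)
Your Kenig--Merle outline is close to the paper's, but you induct on $B$ with $A$ fixed. The paper instead fixes $B$ and inducts on the action, $S_\omega^c(B)=\inf\{A>0 : S_\omega^c(A,B)=\infty\}$. This switch breaks the compactness step.

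In the paper the critical solution has \emph{minimal action} $S_\omega(u_c)=S_\omega^c(B)$, and that is exactly what kills the remainder in $H^1$. Writing $u_c(\tau_n)=u^1(\cdot-x_n)+U_n$, one has $S_\omega(u^1)=S_\omega^c(B)=S_\omega(u_c(\tau_n))$, so $S_\omega(U_n)\to0$. Since $T_\omega(U_n)<S_\omega(Q_\omega)$ gives $K(U_n)>0$ by \eqref{136}, this yields $\|U_n\|_{H^1}^2\lesssim S_\omega(U_n)\to0$.

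In your scheme the one-profile case only gives $S_\omega(u^1)\le S_\omega(u_c)$. The $T_\omega$-criticality $\sup_{t\ge0}T_\omega(v^1(t))=B^\ast$, used at a time where $T_\omega(v^1)$ is nearly $B^\ast$, only yields $M(U_n)\to0$ and hence $N(U_n)\to0$. Because $T_\omega$ contains no gradient, nothing controls $L(U_n)$. In fact $L(U_n)\to2\{S_\omega(u_c)-S_\omega(u^1)\}$, which may be positive. A high-frequency remainder such as $n^{d/2-1}\phi(n\,\cdot)$ has vanishing mass, vanishing $N$ and vanishing $S$-norm of its free evolution, but fixed kinetic energy. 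It is therefore invisible both to $T_\omega$ and to the perturbation theory.

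So your claims that ``the remainder tends to $0$'' and that the forward orbit is precompact modulo translations are unproved in $H^1$; you get at best $L^2$-compactness. The rigidity step you describe, with a compactly supported virial weight as in the paper, needs the uniform smallness of $\int_{|x+x(t)|>R}|\nabla u_c|^2$ from Lemma \ref{UL and order of x}. That bound is then unavailable.

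To close the gap you need a second minimization. Having fixed $B^\ast<S_\omega(Q_\omega)$, minimize the action among non-scattering solutions with $\sup_tT_\omega\le B^\ast$, i.e.\ pass to $S_\omega^c(B^\ast)$ as the paper does; the argument above then gives $H^1$ compactness. Alternatively, you would have to redesign the rigidity argument so that it uses only $L^2$-compactness. One way is a convex, linearly growing weight, so that the gradient term has a favorable sign, with Lemma \ref{Coercivity} applied to a localized $\chi u_c$ (note $T_\omega(\chi u_c)\le T_\omega(u_c)$). Nothing of this kind appears in your proposal.

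A smaller point concerns applying the definition of $B^\ast$ to the profiles. Saying ``$S_\omega(v^j)$ is controlled by $A$ plus bounded terms'' is not enough, because $B^\ast$ depends on $A$. You need $S_\omega(v^j)\le A$. This does follow from the Pythagorean expansion of $S_\omega$ together with $S_\omega\ge0$ on $\{T_\omega\le B\}$ (Lemma \ref{Coercivity}).
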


At first, we note that if
$
	0
		< A, B
		< S_\omega(Q_\omega),
$
then
$
	S_\omega^c(A,B)
		< \infty.
$
Indeed,
$
	S_\omega(u_0)
		< S_\omega(Q_\omega)
$
and
$
	K(u_0)
		\geq 0
$
holds in this case.
If
$
	K(u_0)
		< 0,
$
then
\begin{align*}
	S_\omega(Q_\omega)
		> T_\omega(u(t))
		> \frac{\omega}{2}M(u_0) + \frac{s_c}{d}L(u(t)),
\end{align*}
that is,
$
	\sup_{t \in [0,\infty)}\|u(t)\|_{H^1}
		< \infty.
$
On the other hand, Theorem \ref{Below} and Remark \ref{AkaNawCon} imply that
$
	u
$
blows up or grows up, which is contradiction.

To prove Theorem \ref{Scattering criterion}, we assume that
$
	S_\omega^c(A_\ast,B_\ast)
		= \infty
$
for some
$
	(A_\ast,B_\ast)
		\in (0,\infty) \times (0,S_\omega(Q_\omega))
$
and deduce contradiction.
Then,
\begin{align*}
	S_\omega^c
		= S_\omega^c(B)
		:= \inf\{A > 0 : S_\omega^c(A,B) = \infty\}
\end{align*}
is well-defined for
$
	[B_\ast,S_\omega(Q_\omega)).
$
By the above argument, we can see
$
	S_\omega^c
		\geq S_\omega(Q_\omega)
		> 0.
$
We note also that when
$
	S_\omega^c(B)
		< \infty,
$
the conditions
$
	S_\omega(u_0)
		< S_\omega^c(B)
$
and
$
	\sup_{t \in [0,\infty)}T_\omega(u(t))
		\leq B
$
imply that
$
	u
$
scatters in the forward time by the definition of
$
	S_\omega^c.
$

\subsection{Existence of a critical solution}

In this subsection, we construct the critical solution
$
	u_c,
$
which is a forward non-scattering solution having
$
	S_\omega(u_c)
		= S_\omega^c(B)
$
and
$
	\sup_{t \in [0,\infty)}T_\omega(u_c(t))
		\leq B.
$

\begin{lemma}[Coercivity]\label{Coercivity}
Let
$
	0
		< B
		< S_\omega(Q_\omega).
$
There exists
$
	C
		> 0
$
such that, for any
$
	f
		\in H^1(\R^d)
$
with
$
	T_\omega(f)
		\leq B,
$
we have
\begin{align*}
	K(f)
		\geq CL(f)
	\ \text{ and }\ 
	E(f)
		\geq \frac{C}{4}L(f).
\end{align*}
\end{lemma}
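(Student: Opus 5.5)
The plan is to turn the bound $T_\omega(f)\le B$ into a quantitative gap in the scale-invariant quantity $M^{1-s_c}N^{s_c}$, and then feed that gap into the sharp Gagliardo--Nirenberg inequality \eqref{G-N inequality}, following the computation in the proof of Proposition \ref{Prop: K, MN, and T}. If $f=0$ or $N(f)=0$, the claim is trivial, since then $K(f)=2L(f)$ and $E(f)=\frac12L(f)$. So assume $N(f)>0$.

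\emph{Step 1: a gap in $M^{1-s_c}N^{s_c}$.} Apply Young's inequality as in the step (2)$\Rightarrow$(3) of Proposition \ref{Prop: K, MN, and T}. Together with $Q_\omega=\omega^{\frac{1}{p-1}}Q_1(\omega^{\frac12}\cdot)$ and \eqref{PI}, it gives
\begin{align*}
	\frac{M(f)^{1-s_c}N(f)^{s_c}}{M(Q_1)^{1-s_c}N(Q_1)^{s_c}}
		\leq \frac{T_\omega(f)}{T_\omega(Q_\omega)}
		\leq \frac{B}{S_\omega(Q_\omega)}
		=: \theta
		< 1 .
\end{align*}
Here I use \eqref{136}, i.e.\ $T_\omega(Q_\omega)=S_\omega(Q_\omega)$. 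The constant in Young's inequality depends only on $\omega,d,p$, and equality in the chain holds for $f=Q_\omega$; this is what makes the normalization by $Q_1$ exact.

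\emph{Step 2: coercivity of $K$.} Write $\kappa:=\frac{d(p-1)}{p+1}$. The GN exponent of $L$ is $\frac{(p-1)s_c}{2}+1=\frac{d(p-1)}{4}$. Solving \eqref{G-N inequality} for $L$ therefore gives
\begin{align*}
	\frac{L(f)}{N(f)}
		\geq \frac{c_0}{\left(M(f)^{1-s_c}N(f)^{s_c}\right)^{2/d}},
\end{align*}
where $c_0$ is fixed by the case of equality $f=Q_1$. For $Q_1$ the identity \eqref{PI} reads $2L(Q_1)=\kappa N(Q_1)$. Hence Step 1 yields
\begin{align*}
	2L(f)\geq \theta^{-2/d}\kappa N(f)=(1+\e)\kappa N(f), \qquad \e:=\theta^{-2/d}-1>0 .
\end{align*}
It follows that
\begin{align*}
	K(f)=2L(f)-\kappa N(f)\geq 2L(f)\left(1-\frac{1}{1+\e}\right)=\frac{2\e}{1+\e}L(f).
\end{align*}
The constant $\e$ depends only on $B$, $\omega$, $d$ and $p$.

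\emph{Step 3: the energy bound.} Eliminating $N$ through $N(f)=\frac{p+1}{d(p-1)}\bigl(2L(f)-K(f)\bigr)$ gives
\begin{align*}
	E(f)=\left(\frac12-\frac{2}{d(p-1)}\right)L(f)+\frac{1}{d(p-1)}K(f).
\end{align*}
Since $p>1+\frac4d$, the first coefficient is positive and $K(f)\ge0$. Hence $E(f)\ge\bigl(\frac12-\frac{2}{d(p-1)}\bigr)L(f)$. Finally, choose $C:=\min\bigl\{\frac{2\e}{1+\e},\,4\bigl(\frac12-\frac{2}{d(p-1)}\bigr)\bigr\}$, so that both inequalities hold with the same constant.

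The only delicate point is Step 1: checking that the Young-inequality constants are exactly those for which $Q_\omega$ gives equality, so that the ratio $T_\omega(f)/T_\omega(Q_\omega)$ really controls the normalized $MN$ quantity with no loss. This is, however, already carried out in the proof of Proposition \ref{Prop: K, MN, and T}, so I would cite that computation.
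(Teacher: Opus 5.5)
Your proof is correct. It is not the proof given right after the lemma in Section \ref{Sec: S}, but it is essentially the paper's alternative proof in Appendix \ref{Alternative}. There, too, the Gagliardo--Nirenberg inequality \eqref{G-N inequality} is combined with the Young-type bound $\{\frac{\omega}{2(1-s_c)}M(f)\}^{1-s_c}\{\frac{p-1}{2(p+1)}N(f)\}^{s_c}\le T_\omega(f)\le B$ and the Pohozaev identities \eqref{PI}. The resulting constant $2\{1-(B/S_\omega(Q_\omega))^{2/d}\}$ is exactly your $\frac{2\e}{1+\e}$; you simply normalize by $Q_1$ instead of computing the coefficient explicitly.

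The main-text proof takes a different route. It splits into two regimes. For $L(f)\le C_{\omega,B}$ it uses \eqref{G-N inequality} with only the crude mass bound $M(f)\le 2B/\omega$. For $L(f)>C_{\omega,B}$ it argues by contradiction, using Lieb's compactness lemma, the Brezis--Lieb splitting of $K$ and $T_\omega$, and the variational characterization \eqref{136}. That route gives no explicit constant. In exchange it does not need the sharp value of $C_{\rm GN}$, the scaling $Q_\omega=\omega^{\frac{1}{p-1}}Q_1(\omega^{\frac12}\cdot)$, or the fact that $Q_1$ is a Gagliardo--Nirenberg optimizer. This is why it transfers to Lemma \ref{Coercivity P}, with Strauss's radial compactness replacing Lieb's lemma. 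In that setting the threshold $S_{\omega,P}(Q_{\omega,P})=r_{\omega,P}$ is strictly larger than $n_\omega$, so a sharp Gagliardo--Nirenberg argument gives nothing for $B\in[n_\omega,r_{\omega,P})$. Your route is elementary and quantitative, but it is tied to the scale-invariant equation.

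A small simplification of your Step 3: $E(f)-\frac14K(f)=\frac{d(p-1)-4}{4(p+1)}N(f)\ge0$. So $E(f)\ge\frac C4L(f)$ follows directly from $K(f)\ge CL(f)$ without taking a minimum. This appears to be what the paper means by ``the second conclusion follows from the first one.''
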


\begin{proof}
The second conclusion follows from the first one.
Then, we see only the first inequality.
Applying the Gagliardo--Nirenberg inequality \eqref{G-N inequality}, we have
\begin{align*}
	K(f)
		& \geq 2L(f) - \frac{d(p-1)}{p+1}C_{\rm GN}M(f)^\frac{(p-1)(1-s_c)}{2}L(f)^{\frac{(p-1)s_c}{2}+1} \\
		& \geq \left\{2 - \frac{d(p-1)}{p+1}C_{\rm GN}\(\frac{2B}{\omega}\)^\frac{(p-1)(1-s_c)}{2}L(f)^\frac{(p-1)s_c}{2}\right\}L(f).
\end{align*}
Therefore, if
$
	L(f)
		\leq C_{\omega,B}
$
for
\begin{align*}
	C_{\omega,B}^{-\frac{(p-1)s_c}{2}}
		:= \frac{d(p-1)}{p+1}C_{\rm GN}\(\frac{2B}{\omega}\)^\frac{(p-1)(1-s_c)}{2},
\end{align*}
then
$
	K(f)
		\geq L(f).
$

On the other hand, if we deny the desired result when
$
	L(f)
		> C_{\omega,B},
$
there exists
$
	\{f_n\}
		\subset H^1(\R^d)
$
such that
$
	L(f_n)
		> C_{\omega,B},
$
$
	T_\omega(f_n)
		\leq B
		< S_\omega(Q_\omega),
$
and
\begin{align}\label{117}
	K(f_n)
		< \frac{1}{n}L(f_n).
\end{align}
\eqref{117} implies that
\begin{align}\label{180}
	\(2 - \frac{1}{n}\)L(f_n)
		< \frac{d(p-1)}{p+1}N(f_n).
\end{align}
Combining \eqref{117} and \eqref{180}, we notice that
$
	\sup_{n \in \N}\|f_n\|_{H^1}
		< \infty
$
holds and hence, we have
$
	K(f_n)
		\longrightarrow 0
$
as
$
	n
		\rightarrow \infty.
$
Here, we use Lieb's compactness lemma \cite[Lemma 6]{Lie83} to get a subsequence of
$
	f_n
$
(which is denoted by the same symbol), a sequence
$
	\{y_n\}
		\subset \R^d,
$
and a non-zero function
$
	g
		\in H^1(\R^d) \setminus\{0\}
$
satisfying
$
	g_n
		:= f_n(\cdot - y_n)
		\rightharpoonup g
$
in
$
	H^1(\R^d).
$
Passing to a subsequence again and applying Brezis--Lieb's lemma \cite{BreLie83}, we get
\begin{align*}
	& K(f_n) - K(g_n - g)
		\longrightarrow K(g), \\
	& T_\omega(f_n) - T_\omega(g_n - g)
		\longrightarrow T_\omega(g)
\end{align*}
as
$
	n
		\rightarrow \infty.
$
We will get contradiction by splitting into
$
	K(g)
		\leq 0
$
and
$
	K(g)
		> 0.
$
Using \eqref{136},
$
	K(g)
		\leq 0
$
deduces that
$
	S_\omega(Q_\omega)
		\leq T_\omega(g).
$
Therefore, we obtain
\begin{align*}
	0
		\leq \liminf_{n \rightarrow \infty}T_\omega(g_n - g)
		\leq \limsup_{n \rightarrow \infty}T_\omega(f_n) - T_\omega(g)
		\leq B - S_\omega(Q_\omega)
		< 0,
\end{align*}
which is contradiction.
On the other hand,
$
	K(g)
		> 0
$
implies that
$
	K(g_n - g)
		< 0
$
for sufficiently large
$
	n,
$
where we recall
$
	K(f_n)
		\longrightarrow 0
$
as
$
	n
		\rightarrow \infty.
$
So, we have
$
	T_\omega(g_n - g)
		\geq S_\omega(Q_\omega)
$
and thus, we obtain
\begin{align*}
	0
		& < T_\omega(g)
		= \lim_{n \rightarrow \infty}\{T_\omega(f_n) - T_\omega(g_n - g)\} \\
		& \leq \limsup_{n \rightarrow \infty}T_\omega(f_n) - \liminf_{n \rightarrow \infty}T_\omega(g_n - g)
		\leq B - S_\omega(Q_\omega)
		< 0,
\end{align*}
which is contradiction.
\end{proof}

To get the critical solution, we use the next linear profile decomposition \cite[Lemma 2.1]{DuyHolRou08}:

\begin{lemma}[Linear profile decomposition]\label{LPD}
Let
$
	\{f_n\}
$
be a bounded sequence in
$
	H^1(\R^d).
$
Passing to a subsequence, there exist
$
	J^\ast
		\in \{0,1,\ldots,\infty\},
$
profiles
$
	\{f^j\}
		\subset H^1(\R^d)
$
satisfying
$
	f^0
		\equiv 0,
$
$
	f^j
		\not\equiv 0
$
for any
$
	1
		\leq j
		\leq J^\ast
$
and
$
	f^j
		\equiv 0
$
for any
$
	j
		\geq J^\ast + 1,
$
parameters
$
	\{(t_n^j,x_n^j)\}
		\subset \R \times \R^d,
$
and reminders
$
	\{R_n^J\}
		\subset H^1(\R^d)
$
such that
\begin{align*}
	f_n(x)
		= \sum_{j=0}^Je^{-it_n^j\Delta}f^j(x-x_n^j) + R_n^J(x)
\end{align*}
for any
$
	0
		\leq J
		< \infty
$
and any
$
	n
		\geq 1.
$
The parameters
$
	\{(t_n^j,x_n^j)\}
$
satisfies
\begin{align*}
	\lim_{n \rightarrow \infty}(|t_n^j - t_n^k| + |x_n^j - x_n^k|)
		= \infty
\end{align*}
for any
$
	j
		\neq k
$
and
\begin{align*}
	& \text{either }t_n^j
		\longrightarrow \pm\infty\ \text{ as }\ n \rightarrow \infty
	\ \ \text{ or }\ \ 
	t_n^j \equiv 0\ \text{ for each }\ n \in \N, \\
	& \text{either }|x_n^j|
		\longrightarrow \infty\ \text{ as }\ n \rightarrow \infty
	\ \ \text{ or }\ \ 
	x_n^j \equiv 0\ \text{ for each }\ n \in \N
\end{align*}
for any
$
	j
		\geq 0.
$
Moreover, $\{R_n^J\}$ satisfies
\begin{align*}
	e^{it_n^j\Delta}R_n^J
		\xrightharpoonup[]{\hspace{0.4cm}}
		\begin{cases}
		&\hspace{-0.4cm}\displaystyle{
			f^j, \quad (J < j),
		}\\
		&\hspace{-0.4cm}\displaystyle{
			\hspace{0.1cm}0,\hspace{0.1cm} \quad (J \geq j)
		}
		\end{cases}
\end{align*}
in
$
	H^1(\R^d)
$
as
$
	n
		\rightarrow \infty
$
for any
$
	j
		\geq 0
$
and
\begin{align}
	\lim_{J \rightarrow \infty}\limsup_{n \rightarrow \infty}\|e^{it\Delta}R_n^J\|_{L_t^\infty(\R;L_x^{\frac{(p-1)d}{2}})\cap S(\R)}
		= 0. \label{118}
\end{align}
Furthermore, Pythagorean decomposition
\begin{align}
	\|f_n\|_{\dot{H}^s}^2
		& = \sum_{j = 0}^J\|f^j\|_{\dot{H}^s}^2 + \|R_n^J\|_{\dot{H}^s}^2 + o_n(1)\ \text{ for }\ 0 \leq s \leq 1, \label{119} \\
	\|f_n\|_{L^{p+1}}^{p+1}
		& = \sum_{j = 0}^J\|e^{-it_n^j\Delta}f^j\|_{L^{p+1}}^{p+1} + \|R_n^J\|_{L^{p+1}}^{p+1} + o_n(1) \label{120}
\end{align}
hold.
\end{lemma}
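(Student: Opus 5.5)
This is the standard linear profile decomposition in $H^1$, cited from \cite[Lemma 2.1]{DuyHolRou08}. I would reprove it by the usual iterative extraction of concentrating bubbles, modulo the symmetries of the linear flow, namely time and space translations. There is no scaling parameter because we work in inhomogeneous $H^1$ with $s_c\in(0,1)$.

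\emph{Extraction step.} Let $\{f_n\}$ be bounded in $H^1$ and set $R_n^0:=f_n$. For a bounded sequence $\{g_n\}$, define
\begin{align*}
	\nu(\{g_n\}):=\sup\{\|\phi\|_{H^1} : e^{it_n\Delta}g_n(\cdot+x_n)\rightharpoonup\phi \text{ weakly in } H^1 \text{ along a subsequence, for some } (t_n,x_n)\}.
\end{align*}
If $\nu(\{R_n^{J}\})=0$, stop and set $J^\ast=J$. Otherwise choose $(t_n^{J+1},x_n^{J+1})$ and a weak limit $f^{J+1}$ with $\|f^{J+1}\|_{H^1}\ge\frac12\nu(\{R_n^J\})$, and put
\begin{align*}
	R_n^{J+1}:=R_n^J-e^{-it_n^{J+1}\Delta}f^{J+1}(\cdot-x_n^{J+1}).
\end{align*}
After passing to subsequences and modifying the profiles by fixed translations, we may assume $t_n^j\equiv0$ or $t_n^j\to\pm\infty$, and $x_n^j\equiv0$ or $|x_n^j|\to\infty$.

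\emph{Orthogonality and Pythagorean identities.} Weak convergence gives
\begin{align*}
	\|R_n^J\|_{\dot H^s}^2=\|R_n^{J+1}\|_{\dot H^s}^2+\|f^{J+1}\|_{\dot H^s}^2+o_n(1),
\end{align*}
because the cross term is an inner product against a weakly null sequence. This yields \eqref{119} by induction. Asymptotic orthogonality of the parameters is proved by contradiction. If $|t_n^j-t_n^k|+|x_n^j-x_n^k|$ stayed bounded for some $j<k$, the unitary group would carry the weak limit defining $f^k$ back to a weak limit of $e^{it_n^j\Delta}R_n^{k-1}(\cdot+x_n^j)$. That limit is $0$ by construction, which forces $f^k=0$. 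The same argument gives the stated weak convergence of $e^{it_n^j\Delta}R_n^J$. Summability $\sum_j\|f^j\|_{H^1}^2<\infty$ then gives $\nu(\{R_n^J\})\to0$ as $J\to\infty$.

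\emph{Smallness of the remainder \eqref{118}.} This is the main obstacle. I would first prove the refined Strichartz-type inequality
\begin{align*}
	\|e^{it\Delta}g\|_{L_t^\infty L_x^{\frac{(p-1)d}{2}}}\lesssim \|g\|_{H^1}^{1-\theta}\,\sup_{t,x}\bigl|(e^{it\Delta}P_{\le K}g)(x)\bigr|^{\theta}+K^{-\alpha}\|g\|_{H^1}
\end{align*}
for some $\theta,\alpha>0$. The proof uses a Littlewood--Paley frequency cutoff, Sobolev embedding ($\frac{(p-1)d}{2}<2^\ast$ since $s_c<1$), and interpolation with $L^2$. A sup-point of $|e^{it\Delta}P_{\le K}R_n^J|$ yields parameters $(t_n,x_n)$, and pairing against a bump yields a weak limit of $H^1$-size $\gtrsim$ that sup. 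Hence this sup is controlled by $\nu(\{R_n^J\})$, which tends to $0$.

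For the $S(\R)$ norm, I would interpolate the $\dot H^{s_c}$-admissible Strichartz bound (uniform in $J$ by \eqref{119}) against the $L_t^\infty L_x^{(p-1)d/2}$ smallness. This works because $(q_1,r_0)$ and $(q_2,r_2)$ can be written as interpolants between $L^\infty_tL^{(p-1)d/2}_x$ and another $\dot H^{s_c}$-admissible endpoint.

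\emph{The $L^{p+1}$ decoupling \eqref{120}.} Expand $|\sum_j\cdot|^{p+1}$ and control the cross terms. Orthogonality of parameters plus dispersive decay for $|t_n^j|\to\infty$, or disjointness of supports for spatially separated profiles, kill the profile--profile interactions via density of $C_c^\infty$. Profile--remainder interactions vanish by the weak convergence $e^{it_n^j\Delta}R_n^J\rightharpoonup0$, combined with Rellich compactness locally.
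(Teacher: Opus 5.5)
Your proposal is correct and is the standard iterative extraction argument behind the cited lemma of Duyckaerts--Holmer--Roudenko; the paper gives no proof beyond that citation, so there is nothing essential to compare. One small point: your argument establishes the weak convergence of $e^{it_n^j\Delta}R_n^J(\cdot+x_n^j)$, and that translated version is what is true when $|x_n^j|\to\infty$ — the spatial translation is missing from the statement as printed, and without it the claimed limit $f^j$ already fails for $f_n=f(\cdot-n e_1)$.
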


\begin{theorem}[Existence of the critical solution]\label{Existence of CS}
Let
$
	B
		\in (0,S_\omega(Q_\omega))
$
satisfy
$
	S_\omega^c(B)
		< \infty.
$
Then, there exits a solution
$
	u_c
$
to \eqref{NLS} such that
$
	S_\omega(u_c)
		= S_\omega^c(B),
$
$
	\sup_{t \in [0,\infty)}T_\omega(u_c(t))
		\leq B,
$
and
$
	u_c
$
does not scatter in the forward time.
\end{theorem}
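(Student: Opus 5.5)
The proof follows the Kenig--Merle concentration-compactness scheme of \cite{KenMer06, DuyHolRou08, AkaNaw13}, with the extra feature that the condition $\sup_{t\ge0}T_\omega(u(t))\le B$ must be passed to the profiles along whole trajectories, not only at $t=0$.

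\emph{Step 1: the minimizing sequence is bounded.} Since $S_\omega^c(B)<\infty$, by the definition of $S_\omega^c$ there are forward-global solutions $u_n$ with
\begin{align*}
	S_\omega(u_n(0))\longrightarrow S_\omega^c(B), \quad \sup_{t\ge0}T_\omega(u_n(t))\le B, \quad \|u_n\|_{S([0,\infty))}\longrightarrow\infty .
\end{align*}
The bound $T_\omega\le B$ gives $M(u_n)\le 2B/\omega$. Lemma \ref{Coercivity} gives $K(u_n(t))\ge CL(u_n(t))$, hence $E(u_n)\ge \frac C4 L(u_n(t))$. Since $S_\omega(u_n(0))$ is bounded, $\sup_{n,t}\|u_n(t)\|_{H^1}<\infty$. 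We then apply Lemma \ref{LPD} to $u_n(0)$ and obtain profiles $f^j$, parameters $(t_n^j,x_n^j)$ and remainders $R_n^J$.

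\emph{Step 2: nonlinear profiles and decoupling along the flow.} For each $j$ we define the nonlinear profile $U^j$ as the solution with $\|U^j(-t_n^j)-e^{-it_n^j\Delta}f^j\|_{H^1}\to0$, using wave operators when $t_n^j\to\pm\infty$. We then need three facts.
\begin{itemize}
\item[(i)] $\sum_j S_\omega(f^j\text{-profile})+S_\omega(R_n^J)\le S_\omega^c(B)+o_n(1)$. This follows from \eqref{119} and \eqref{120} together with the identity $S_\omega=\frac\omega2 M+\frac12 L-\frac1{p+1}N$.
\item[(ii)] Each profile term and the remainder term in (i) is nonnegative. Each satisfies $T_\omega\le B$ (since $T_\omega$ is a sum of nonnegative decoupling pieces), so Lemma \ref{Coercivity} gives $K\ge0$ and $S_\omega=T_\omega+\frac14K\ge0$, with strict positivity for nonzero profiles.
\item[(iii)] $\sup_{t\ge0}T_\omega(U^j(t))\le B$ for every $j$ for which $U^j$ exists on $[0,\infty)$.
\end{itemize}

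\emph{Step 3: the main obstacle, property (iii).} I expect (iii) to be the main difficulty, since the hypothesis constrains the whole trajectory. I would argue by contradiction. Suppose the nonlinear approximation $\tilde u_n(t)=\sum_{j\le J}U^j(t-t_n^j,\cdot-x_n^j)+e^{it\Delta}R_n^J$ has bounded $S$-norm on some interval $[0,T]$. Then Proposition \ref{LTP} shows that $u_n(t)-\tilde u_n(t)\to0$ in $L^2\cap L^{p+1}$ uniformly on $[0,T]$. The orthogonality of the parameters lets us decouple $M$ and $N$ at each fixed time $s$ for the shifted data $u_n(s+t_n^j)$. This yields $T_\omega(U^j(s))\le\liminf_n T_\omega(u_n(s+t_n^j))\le B$, because every piece of $T_\omega$ is nonnegative. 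I would carry this out by induction in time, or by a continuity argument on the maximal interval where the approximation is controlled.

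\emph{Step 4: exactly one profile, and compactness.} Suppose two profiles are nonzero, or the remainder does not vanish in $H^1$. Then by (i) and (ii) every profile has $S_\omega<S_\omega^c(B)$ and, by (iii), $\sup_tT_\omega\le B$. By the definition of $S_\omega^c(B)$, every $U^j$ scatters forward with finite $S$-norm. Together with \eqref{118}, Proposition \ref{LTP} then bounds $\|u_n\|_{S([0,\infty))}$ uniformly, which is a contradiction. Hence $J^\ast=1$ and $S_\omega(f^1)=S_\omega^c(B)$. Moreover $S_\omega(R_n^1)\to0$, which together with coercivity forces $R_n^1\to0$ in $H^1$.

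\emph{Step 5: the time shift is trivial.} If $t_n^1\to+\infty$, then $\|e^{it\Delta}u_n(0)\|_{S([0,\infty))}$ becomes small, and Proposition \ref{SDS} contradicts $\|u_n\|_S\to\infty$. If $t_n^1\to-\infty$, the profile $U^1$ scatters forward and has finite $S$-norm on $[-t_n^1,\infty)$ that tends to $0$; Proposition \ref{LTP} again contradicts $\|u_n\|_S\to\infty$. Therefore $t_n^1\equiv0$.

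\emph{Step 6: conclusion.} Set $u_c:=U^1$, translated so that $x_n^1=0$. Then $S_\omega(u_c)=S_\omega^c(B)$, and $\sup_{t\ge0}T_\omega(u_c(t))\le B$ by (iii). If $u_c$ scattered forward, Proposition \ref{LTP} would bound $\|u_n\|_S$, so $u_c$ does not scatter forward. Forward global existence of $u_c$ follows from the uniform $H^1$ bound given by coercivity.
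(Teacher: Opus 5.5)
The overall Kenig--Merle scheme matches the paper's, but Step 5 contains a sign error, and as a result one case is never treated. Use the convention of Lemma \ref{LPD} together with your own normalization $\|U^j(-t_n^j)-e^{-it_n^j\Delta}f^j\|_{H^1}\to0$. Then the linear part of $u_n(0)$ satisfies $\|e^{it\Delta}e^{-it_n^1\Delta}f^1\|_{S([0,\infty))}=\|e^{it\Delta}f^1\|_{S([-t_n^1,\infty))}$. This tends to $0$ only when $t_n^1\to-\infty$; when $t_n^1\to+\infty$ it tends to $\|e^{it\Delta}f^1\|_{S(\R)}>0$. So the small-linear-flow argument you attach to $t_n^1\to+\infty$ is valid only for $t_n^1\to-\infty$, which your second argument already covers.

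The case $t_n^1\to+\infty$ is left open. In that case $U^1$ scatters backward and starts interacting only at times $t\approx t_n^1$. Here $\|u_n\|_{S([0,\infty))}$ is essentially $\|U^1\|_{S([-t_n^1,\infty))}$, which may well be infinite, so no small-data or perturbative argument excludes it. Your conclusion $t_n^1\equiv0$ is therefore unjustified. The paper keeps this case:
\begin{itemize}
\item it sets $u_c=v^1$;
\item it uses Proposition \ref{SDS} for $u_n$ on $(-\infty,0]$ and Proposition \ref{LTP} to get $\|u_c\|_{S(-\infty,0)}<\infty$, so that \eqref{133} forces $\|u_c\|_{S(0,\infty)}=\infty$;
\item it gets $\sup_{t\ge0}T_\omega(u_c(t))\le\sup_{t\ge-t_n^1}T_\omega(u_c(t))\le B$ for large $n$.
\end{itemize}
Your Step 6 can be adapted to this case, but Step 5 as written does not do it.

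A related problem is the formulation of (iii). What the argument needs, and what the paper states via \eqref{126} and the decoupling \eqref{127}--\eqref{128}, is a bound on the time-shifted profile: $\sup_{t\ge0}T_\omega(v^j(t-t_n^j))<B$. This concerns $v^j$ only on $[-t_n^j,\infty)$. Your version $\sup_{t\ge0}T_\omega(U^j(t))\le B$ is unsupported when $t_n^j\to-\infty$, because the hypothesis on $u_n$ only sees $U^j$ at times $\ge -t_n^j\to\infty$. Those profiles do not need it anyway, since their $S$-norm on $[-t_n^j,\infty)$ tends to $0$.

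Conversely, for $t_n^j\to+\infty$, your mechanism evaluates $u_n$ at $s+t_n^j$ inside a fixed interval $[0,T]$ where the approximation is controlled. It never reaches the relevant times, because $s+t_n^j\to\infty$. Controlling $\tilde u_n$ on $[0,s+t_n^j]$ requires forward $S$-bounds on the profiles that interact earlier, which is exactly what you are trying to prove. To make (iii) rigorous you must:
\begin{itemize}
\item order the finitely many non-small profiles by their time parameters;
\item apply the minimality of $S_\omega^c(B)$ to them successively;
\item handle the small profiles and the remainder by Proposition \ref{SDS} and \eqref{118}.
\end{itemize}
A continuity argument on a fixed $[0,T]$ alone does not suffice.
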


\begin{proof}
Take a sequence
$
	u_{n,0}
		\in H^1(\R^d)
$
satisfying
\begin{itemize}
\item
$
	S_\omega^c(B) + 1
		> S_\omega(u_{n,0})
		\searrow S_\omega^c(B)
$
as
$
	n
		\rightarrow \infty,
$
\item
$
	\sup_{t \in [0,\infty)}T_\omega(u_n(t))
		\leq B
$
for each
$
	n
		\in \N,
$
\item
$
	u_n
$
does not scatters in the forward time for each
$
	n
		\in \N,
$
\end{itemize}
where
$
	u_n
$
denotes the solution to \eqref{NLS} with
$
	u_n(0)
		= u_{n,0}.
$
Then, we have
\begin{align*}
	\omega M(u_n) +	L(u_n(t))
		& = \omega M(u_{n,0}) +	2E(u_{n,0}) + \frac{2}{p+1}N(u_n(t)) \\
		& \leq 2\{S_\omega^c(B) + 1\} + \frac{4}{s_c(p-1)}B
		< \infty,
\end{align*}
that is,
$
	\sup_{n \in \N}\sup_{t \in [0,\infty)}\|u_n(t)\|_{H^1}^2
		\leq C(B,S_\omega^c(B))
		< \infty.
$
Applying the linear profile decomposition (Lemma \ref{LPD}) to
$
	\{u_{n,0}\},
$
we have
\begin{align*}
	u_{n,0}(x)
		= \sum_{j=0}^Je^{-it_n^j\Delta}u^j(x-x_n^j) + U_n^J(x).
\end{align*}

\textbf{(Step 1).}
Exclusion of
$
	J^\ast
		= 0.
$
In this case, we have
\begin{align*}
	\limsup_{n \rightarrow \infty}\|e^{it\Delta} u_{n,0}\|_{S(\R)}
		= \limsup_{n \rightarrow \infty}\|e^{it\Delta} U_n^J\|_{S(\R)}
		= 0,
\end{align*}
which implies
$
	\|u_n\|_{S(\R)}
		< \infty
$
for sufficiently large
$
	n
$
from the small data theory (Propositions \ref{S-norm} and \ref{SDS}).
However, this contradicts taking of
$
	u_{n,0}.
$

\textbf{(Step 2).}
Exclusion of
$
	J^\ast
		\geq 2.
$
Take solutions
$
	v^j
$
to \eqref{NLS} satisfying
\begin{itemize}
\item
$
	v^j(0,x)
		= u^j(x)
$
when
$
	t_n^j
		\equiv 0,
$
\item
$
	\|v^j(-t_n^j,\cdot) - e^{-it_n^j\Delta}u^j\|_{H^1}
		\longrightarrow 0
$
as
$
	n
		\rightarrow \infty
$
when
$
	|t_n^j|
		\longrightarrow \infty
$
as
$
	n
		\rightarrow \infty.
$
\end{itemize}
We note that
\begin{align}\label{129}
	\lim_{n \rightarrow \infty}\|v^j(-t_n^j,\cdot) - e^{-it_n^j\Delta}u^j\|_{H^1}
		\longrightarrow 0
\end{align}
as
$
	n
		\rightarrow \infty
$
in the both cases.
Using
$
	v^j,
$
we write
\begin{align}\label{121}
	u_{n,0}(x)
		= \sum_{j=0}^Jv^j(-t_n^j,x-x_n^j) + V_{n,0}^J(x).
\end{align}
We notice that
\begin{align*}
	V_{n,0}^J(x)
		= \sum_{j=0}^J\{e^{-it_n^j\Delta}u^j(x-x_n^j) - v^j(-t_n^j,x-x_n^j)\} + U_n^J(x)
\end{align*}
and
\begin{align}\label{130}
	\lim_{J\rightarrow \infty}\limsup_{n \rightarrow \infty}\|e^{it\Delta}V_{n,0}^J\|_{S(\R)}
		= 0
\end{align}
by \eqref{118} and \eqref{129}.
It follows from \eqref{119} that
\begin{align}\label{132}
	M(u_{n,0})
		= \sum_{j = 0}^JM(u^j) + M(U_n^J) + o_n(1)
\end{align}
and hence,
\begin{align}\label{126}
	M(v^j(t - t_n^j))
		= M(u^j)
		< M(u_{n,0})
\end{align}
for each
$
	1
		\leq j
		\leq J
$
and sufficiently large
$
	n
		\in \N.
$

Moreover, we use the following decomposition by Holmer--Roudenko \cite[Lemma 6.3]{HolRou10} (see also \cite[Lemma 3.9]{Gue14}):
\begin{lemma}
Let
$
	T
		\in (0,\infty)
$
be a fixed time.
Assume that
$
	u_n
$
exists on
$
	[0,T]
$
for each
$
	n
		\in \N
$
and
\begin{align*}
	\lim_{n \rightarrow \infty}\sup_{t \in [0,T]}L(u_n(t))
		< \infty.
\end{align*}
Then, for the decomposition \eqref{121}, the solution
$
	V_n^J
$
to \eqref{NLS} with
$
	V_n^J(0)
		= V_{n,0}^J
$
and all
$
	v^j
$
$
	(1 \leq j\leq J)
$
exist on
$
	[0,T]
$
and
\begin{align}
	L(u_n(t))
		& = \sum_{j = 0}^JL(v^j(t-t_n^j)) + L(V_n^J(t)) + o_n(1), \label{127} \\
	N(u_n(t))
		& = \sum_{j = 0}^JN(v^j(t-t_n^j)) + N(V_n^J(t)) + o_n(1) \label{128}
\end{align}
for any
$
	t
		\in [0,T],
$
where
$
	o_n(1)
$
goes to
$
	0
$
as
$
	n
		\rightarrow \infty
$
uniformly on
$
	0
		\leq t
		\leq T.
$
\end{lemma}
Then, we can see that
$
	v^j(t-t_n^j)
$
and
$
	V_n^J(t)
$
exist on
$
	[0,\infty)
$
for sufficiently large
$
	n.
$
Especially,
$
	L(v^j(t-t_n^j))
$
and
$
	L(V_n^J(t))
$
are bounded uniformly on
$
	t
		\in [0,\infty)
$
for sufficiently large
$
	n.
$
Combining \eqref{126} and \eqref{128}, we have
\begin{align*}
	\sup_{t \in [0,\infty)}T_\omega(v^j(t-t_n^j))
		< B.
\end{align*}
Applying \eqref{126}, \eqref{127}, \eqref{128}, and the coercivity of the energy (Lemma \ref{Coercivity}), we have
\begin{align*}
	S_\omega(v^j(t-t_n^j))
		< S_\omega^c(B).
\end{align*}
Therefore, we obtain
\begin{align*}
	\|v^j(t-t_n^j)\|_{S([0,\infty))}
		< \infty
\end{align*}
for any
$
	1
		\leq j
		\leq J
$
by the definition of
$
	S_\omega^c(B).
$
Moreover, it follows from \eqref{130} and \eqref{131} that
\begin{align*}
	\lim_{J\rightarrow \infty}\limsup_{n\rightarrow \infty}\|V_n^J\|_{S([0,\infty))}
		= 0.
\end{align*}
Here, we define
\begin{align*}
	u_n^{\leq J}(t,x)
		:= \sum_{j=1}^Jv_n^j(t,x) + e^{it\Delta}U_n^J(x)
\end{align*}
for
$
	v_n^j(t,x)
		:= v^j(t-t_n^j,x-x_n^j)
$
and it follows from an argument in \cite{AkaNaw13} that this is an approximate solution of
$
	u_n
$
in the sense of Proposition \ref{LTP}.
Therefore,
$
	J^\ast
		\geq 2
$
does not hold.

\textbf{(Step 3). Existence of the critical solution}

We have already seen that only one nonzero profile exists.
Then, the profile decomposition can be written as
\begin{align*}
	u_{n,0}(x)
		= e^{-it_n^1\Delta}u^1(x-x_n^1) + U_n(x)
		= v^1(-t_n^1,x-x_n^1) + V_{n,0}(x).
\end{align*}
We show that
$
	t_n^1
		\equiv 0
$
for any
$
	n
		\in \N
$
or
$
	t_n^1
		\longrightarrow \infty
$
as
$
	n
		\rightarrow \infty
$
by contradiction.
If
$
	t_n^1
		\longrightarrow - \infty
$
as
$
	n
		\rightarrow \infty,
$
then
\begin{align*}
	\|e^{it\Delta}u_{n,0}\|_{S(0,\infty)}
		\leq \|e^{it\Delta}u^1\|_{S(-t_n^1,\infty)} + \|e^{it\Delta}U_n\|_{S(0,\infty)}
		\longrightarrow 0
\end{align*}
as
$
	n
		\rightarrow \infty.
$
Combining this limit with Proposition \ref{SDS}, we have
$
	\sup_{n \geq n_0}\|u_n\|_{S(0,\infty)}
		< \infty
$
for sufficiently large
$
	n_0
		\in \N.
$
This is contradiction.
We define
$
	u_c(t,x)
		:= v^1(t,x)
$
and
$
	u_{c,n}(t,x)
		:= u_c(t-t_n^1,x-x_n^1).
$
We see
\begin{align}\label{133}
	\limsup_{n \rightarrow \infty}\|u_{c,n}\|_{S(0,\infty)}
		= \limsup_{n \rightarrow \infty}\|u_c\|_{S(-t_n^1,\infty)}
		= \infty
\end{align}
by the contradiction.
Indeed, if not, then Proposition \ref{LTP} implies that
\begin{align*}
	\|u_n\|_{S(0,\infty)}
		< \infty
\end{align*}
for sufficiently large
$
	n
		\in \N.
$
However, this is contradiction.
Here, we claim that
\begin{align}\label{135}
	\|u_c\|_{S(0,\infty)}
		= \infty.
\end{align}
Since it is the direct conclusion of \eqref{133} in the case of
$
	t_n^1
		\equiv 0
$
for any
$
	n
		\in \N,
$
it suffices to consider only the case of
$
	t_n^1
		\longrightarrow \infty
$
as
$
	n
		\rightarrow \infty.
$
Then, we have
\begin{align*}
	\|e^{it\Delta}u_{n,0}\|_{S(-\infty,0)}
		\leq \|e^{it\Delta}u^1\|_{S(-\infty,-t_n^1)} + \|e^{it\Delta}U_n\|_{S(-\infty,0)}
		\longrightarrow 0
\end{align*}
as
$
	n
		\rightarrow \infty.
$
As the above argument, we have
$
	\sup_{n \geq n_0}\|u_n\|_{S(-\infty,0)}
		< \infty
$
for sufficiently large
$
	n_0
		\in \N.
$
In addition, we have
\begin{align*}
	\sup_{n \geq n_1}\|u_{c,n}\|_{S(-\infty,0)}
		= \sup_{n \geq n_1}\|u_c\|_{S(-\infty,-t_n^1)}
		\leq C
		< \infty
\end{align*}
for sufficiently large
$
	n_1
		\geq n_0
$
from Proposition \ref{LTP} with an approximate solution
$
	u_n(t,x) - e^{it\Delta}U_n^1(x).
$
Noting
$
	\|u_c\|_{S(-t_n^1,0)}
		\leq C(n)
		< \infty,
$
we have
$
	\|u_c\|_{S(-\infty,0)}
		\leq C + C(n)
		< \infty.
$
Combining this boundedness with \eqref{133}, we obtain
$
	\|u_c\|_{S(0,\infty)}
		= \infty.
$
\eqref{132} and \eqref{128} deduces that
\begin{align*}
	\sup_{t \in [0,\infty)}T_\omega(u_c)
		\leq \sup_{t \in [-t_n^1,\infty)}T_\omega(u_c)
		= \sup_{t \in [0,\infty)}T_\omega(u_{c,n})
		\leq B.
\end{align*}
Combining this inequality with \eqref{135}, we have
$
	S_\omega(u_c)
		= S_\omega^c(B).
$
\end{proof}

\subsection{Exclusion of the critical solution}

In this subsection, to get contradiction and complete the proof, we eliminate the critical solution constructed in Theorem \ref{Existence of CS}.

We define a equivalence relation
$
	\sim
$
on
$
	H^1(\R^d)
$
as follows:
\begin{align*}
	{}^\exists x_0 \in \R^d
	\ \text{ s.t. }\ 
	f_1
		= f_2(\,\cdot\, - x_0)
	\ \Longrightarrow\ 
	f_1
		\sim f_2.
\end{align*}
$
	H^1/{\sim}
$
denotes the quotient space, which is constructed by the whole equivalence class with respect to
$
	\sim.
$
We represent an element of
$
	H^1/{\sim}
$
by
$
	[f],
$
and let
$
	\pi : H^1 \longrightarrow H^1/{\sim}
$
be the natural projection.
In addition, this space is complete with respect to a distance
\begin{align*}
	d([f_1],[f_2])
		= \inf_{x_0 \in \R^d}\|f_1(\,\cdot\, - x_0) - f_2\|_{H^1}
\end{align*}
(see \cite[Lemma A.1]{DuyHolRou08}).

\begin{proposition}[Precompact flow of the critical solution]\label{Precompact flow of the critical solution}
Let
$
	u_c
$
be the critical solution constructed in Theorem \ref{Existence of CS}.
Then, there exists a continuous path
$
	x(t)
$
in
$
	\R^d
$
such that
\begin{align*}
	K
		= \{u_c(t,\,\cdot\, - x(t)) : t \in [0,\infty)\}
		\subset H^1
\end{align*}
is precompact in
$
	H^1.
$
\end{proposition}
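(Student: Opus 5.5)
The plan follows the standard Kenig--Merle / Duyckaerts--Holmer--Roudenko argument. First we show that $\pi(\{u_c(t) : t \geq 0\})$ is precompact in $H^1/{\sim}$. Then we select a continuous representative path $x(t)$.

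\textbf{Precompactness in the quotient.} We argue by contradiction. Suppose there exist $\eta>0$ and times $t_n \geq 0$ with $d([u_c(t_n)],[u_c(t_m)]) \geq \eta$ for all $n \neq m$. Passing to a subsequence, we may assume $t_n \to \infty$ (a bounded sequence has a convergent subsequence by continuity of $u_c$ in $H^1$). The sequence $\{u_c(t_n)\}$ is bounded in $H^1$: by Lemma \ref{Coercivity}, $T_\omega(u_c(t)) \leq B$ gives $\sup_t L(u_c(t)) \lesssim E(u_c) < \infty$, and the mass is conserved. Apply Lemma \ref{LPD} to $u_c(t_n)$ and repeat Steps 1--3 of the proof of Theorem \ref{Existence of CS} with $u_{n,0}$ replaced by $u_c(t_n)$. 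Each solution $u_c(t_n+\cdot)$ satisfies $S_\omega = S_\omega^c(B)$, has $\sup_{t\geq 0} T_\omega \leq B$, and does not scatter forward, since $\|u_c\|_{S(t_n,\infty)}=\infty$. This excludes $J^\ast=0$ and $J^\ast\geq 2$, so there is exactly one profile. It also excludes $t_n^1\to-\infty$, since that would force $\|u_c\|_{S(t_n,\infty)}$ to be small.

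It remains to rule out $t_n^1 \to +\infty$. In that case, $\|e^{it\Delta}u_c(t_n)\|_{S(-\infty,0)} \to 0$. By Proposition \ref{SDS} (solving backward from $t_n$), this gives $\|u_c\|_{S(0,t_n)} \leq 2\delta$ for large $n$, hence $\|u_c\|_{S(0,\infty)}$ is small along the union. This contradicts \eqref{135}.

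So $t_n^1 \equiv 0$, and $u_c(t_n,\cdot+x_n^1) = u^1 + U_n$. Next we show $U_n \to 0$ in $H^1$. The Pythagorean expansions \eqref{119}, \eqref{120} give $S_\omega(u_c)=S_\omega(u^1)+S_\omega(U_n)+o_n(1)$ and the analogous splitting for $T_\omega$. Minimality of $S_\omega^c(B)$ forces $S_\omega(v^1) = S_\omega^c(B)$; otherwise $v^1$ would scatter and long time perturbation (Proposition \ref{LTP}) would make $u_c$ scatter. Hence $S_\omega(U_n)\to 0$. Together with coercivity ($E \gtrsim L$ under $T_\omega\leq B$) and $M(U_n)\to 0$ from $S_\omega \geq \frac{\omega}{2}M$, this gives $\|U_n\|_{H^1}\to0$. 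Then $[u_c(t_n)] \to [u^1]$ along a subsequence, contradicting the $\eta$-separation.

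\textbf{Continuous path.} Precompactness of the orbit in $H^1/{\sim}$ yields, for each $\e>0$, an $R(\e)$ and some (not necessarily continuous) $x(t)$ such that $\int_{|x+x(t)|>R}(|\nabla u_c|^2+|u_c|^2)\,dx<\e$. To get continuity, we follow \cite[Lemma A.1 and the subsequent argument]{DuyHolRou08}. Using uniform continuity of $t\mapsto u_c(t)$ on compact intervals and the fact that $M(u_c)>0$ prevents mass from escaping, one shows $|x(t)-x(s)|$ is bounded when $|t-s|\leq 1$. We then redefine $x(t)$ on the grid $t\in\N$ and interpolate linearly. Because translations by a bounded amount preserve precompactness (the translation group acts continuously on $H^1$), the modified family $\{u_c(t,\cdot - x(t))\}$ remains precompact in $H^1$.

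\textbf{Main obstacle.} The delicate point is the step showing $U_n\to 0$ and the energy decoupling. Here the non-standard constraint $\sup_t T_\omega \leq B$ must be inherited by the profile and the remainder. We expect to handle this via \eqref{128} and mass decoupling \eqref{132}, exactly as in Step 2 of the proof of Theorem \ref{Existence of CS}, since $T_\omega$ involves only $M$ and $N$.
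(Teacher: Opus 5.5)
Your proposal is correct and follows essentially the same route as the paper. Both take a profile decomposition of $u_c(t_n)$ with $t_n\to\infty$, reduce to a single profile via Steps 1--3 of Theorem \ref{Existence of CS}, exclude $t_n^1\to\pm\infty$ by small data theory, and get $\|U_n\|_{H^1}\to0$ from $S_\omega(u^1)=S_\omega^c(B)$ and the decoupling.

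The differences are cosmetic. You get $\|U_n\|_{H^1}^2\lesssim S_\omega(U_n)$ from Lemma \ref{Coercivity} rather than from $K(U_n)>0$ via \eqref{136}. You also sketch the continuous-path construction that the paper simply imports from \cite[Lemma A.3]{DuyHolRou08}. In that sketch, the bound on $|x(t)-x(s)|$ for $|t-s|\le1$ must hold uniformly in $t\ge0$. That uniformity comes from precompactness plus continuous dependence of the flow (or a uniform-in-time $H^{-1}$ Lipschitz bound from the equation), not from uniform continuity on compact intervals alone.
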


\begin{proof}
We deduce contradiction by assuming that Proposition \ref{Precompact flow of the critical solution} does not hold.
Then, \cite[Lemma A.3]{DuyHolRou08} deduces that
\begin{align*}
	\pi(\{u_c(t) : t \in [0,\infty)\})
\end{align*}
is not precompact in
$
	H^1/{\sim},
$
so there exists
$
	\{[u_c(\tau_n)]\}
$
such that any subsequence does not converge.
That is, for any
$
	[u^1]
		\in H^1/{\sim},
$
there exists
$
	\varepsilon
		> 0
$
such that for any
$
	N
		\in \N,
$
there exists
$
	n
		\geq N
$
such that
\begin{align}\label{140}
	\inf_{x_0 \in \R^d}\|u_c(\tau_n,\,\cdot\, - x_0) - u^1\|_{H^1}
		\geq \varepsilon.
\end{align}
In the case
$
	\{\tau_n\}
$
is a convergent sequence,
$
	u_c(\tau_n,\,\cdot\,)
$
converges since the solution to \eqref{NLS} is continuous with respect to
$
	t.
$
This contradicts \eqref{140}.
Thus, we consider a
$
	\{\tau_n\}
$
with
$
	\tau_n
		\longrightarrow \infty
$
as
$
	n
		\rightarrow \infty.
$
Applying the linear profile decomposition (Lemma \ref{LPD}) to
$
	\{u_c(\tau_n)\},
$
we have
\begin{align*}
	u_c(\tau_n,x)
		= \sum_{j=0}^Je^{-it_n^j\Delta}u^j(x-x_n^j) + U_n^J(x).
\end{align*}
The same argument as Theorem \ref{Existence of CS} deduces that
$
	J
		= 1
$
and hence, we write as
\begin{align*}
	u_c(\tau_n,x)
		= e^{-it_n \Delta}u^1(x-x_n) + U_n(x)
		= v^1(-t_n,x-x_n) + V_{n,0}(x),
\end{align*}
where
$
	v^1
$
and
$
	V_{n,0}
$
are the same those as in the proof of Theorem \ref{Existence of CS}.
Using the argument in Theorem \ref{Existence of CS} again, we exclude the case of
$
	t_n
		\longrightarrow - \infty
$
as
$
	n
		\rightarrow \infty.
$
Here, we remove also the case of
$
	t_n
		\longrightarrow \infty
$
as
$
	n
		\rightarrow \infty.
$
In this case, the inequality
\begin{align*}
	\|e^{it\Delta}u_c(\tau_n)\|_{S(-\infty,0)}
		\leq \|e^{it\Delta}u^1\|_{S(-\infty,-t_n)} + \|e^{it\Delta}U_n\|_{S(-\infty,0)}
		\longrightarrow 0
\end{align*}
holds.
Proposition \ref{SDS} shows that
$
	\|u_c\|_{S(-\infty,\tau_n)}
		= \|u_c(\cdot + \tau_n)\|_{S(-\infty,0)}
		\longrightarrow 0,
$
which gives us
$
	u_c
		\equiv 0.
$
This is contradiction.
Therefore, we obtain
$
	t_n
		= 0
$
for each
$
	n
		\in \N.
$

As the argument of (Step 3) in Theorem \ref{Existence of CS}, we have
\begin{align*}
	\sup_{t \in [0,\infty)}T_\omega(v^1)
		\leq \sup_{t \in [-\tau_n,\infty)}T_\omega(v^1)
		\leq \sup_{t \in [-\tau_n,\infty)}T_\omega(u_c(\cdot + \tau_n))
		= \sup_{t \in [0,\infty)}T_\omega(u_c)
		\leq B
\end{align*}
and
\begin{align*}
	S_\omega(u^1)
		= S_\omega(v^1)
		= S_\omega^c(B).
\end{align*}
In addition, it follows from \eqref{119} and \eqref{120} that
\begin{align*}
	\limsup_{n \rightarrow \infty}T_\omega(U_n)
		& \leq - T_\omega(u^1) + \limsup_{n \rightarrow \infty}T_\omega(u_c(\tau_n))
		< B
		< S_\omega(Q_\omega).
\end{align*}
Combining this with \eqref{136}, we have
$
	K(U_n)
		> 0
$
for sufficiently large
$
	n
		\in \N
$
and hence, we obtain
\begin{align*}
	\lim_{n \rightarrow \infty}\|u_c(\tau_n) - u^1(\cdot - x_n)\|_{H^1}^2
		& = \lim_{n \rightarrow \infty}\|U_n\|_{H^1}^2
		\lesssim \lim_{n \rightarrow \infty}S_\omega(U_n)
		= 0,
\end{align*}
where we refer to e.g., \cite[Proposition 4.2]{HamIkeISAAC} for the last inequality.
That is,
$
	u_c(\tau_n,\cdot\,+ x_n)
$
converges to
$
	u^1
$
in
$
	H^1(\R^d)
$
as
$
	n
		\rightarrow \infty,
$
which contradicts \eqref{140}.
\end{proof}

A similar argument to \cite[Proposition 4.1]{DuyHolRou08} deduces that the critical solution
$
	u_c
$
given in Theorem \ref{Existence of CS} has zero momentum, that is,
\begin{align*}
	\mathcal{M}(u_c(t))
		:= \Im\int_{\R^d}\overline{u_c(t,x)}\nabla u_c(t,x)dx
		\equiv 0.
\end{align*}

\begin{lemma}\label{UL and order of x}
Let
$
	u
$
be a forward time-global solution to \eqref{NLS} satisfying that
\begin{align*}
	K
		= \{u(t,\cdot - x(t)) : t \in [0,\infty)\}
		\subset H^1(\R^d)
\end{align*}
is precompact for some function
$
	x(t) : [0,\infty) \longrightarrow \R^d.
$
Then, for each
$
	\e
		> 0,
$
there exists
$
	R
		= R_{\e}
		> 0
$
such that
\begin{align*}
	& \int_{|x + x(t)| > R}(|u(t,x)|^2 + |\nabla u(t,x)|^2 + |u(t,x)|^{p+1})dx
		\leq \e
\end{align*}
for any
$
	t
		\in [0,\infty).
$
Moreover, if
$
	x(t)
$
is continuous and
$
	\mathcal{M}(u)
		= 0,
$
then
$
	\frac{x(t)}{t}
		\longrightarrow 0
$
as
$
	t
		\rightarrow \infty.
$
\end{lemma}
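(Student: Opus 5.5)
The plan has two parts, following the standard argument of Duyckaerts--Holmer--Roudenko \cite[Lemma 5.1 and Lemma 5.2]{DuyHolRou08}.

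\textbf{Uniform localization.} This is a direct consequence of precompactness, argued by contradiction. Suppose that for some $\e>0$ and every $R=n$ there is a time $t_n$ with
\begin{align*}
	\int_{|x|>n}\(|u(t_n,x-x(t_n))|^2+|\nabla u(t_n,x-x(t_n))|^2+|u(t_n,x-x(t_n))|^{p+1}\)dx > \e .
\end{align*}
This is the change of variables $x\mapsto x-x(t)$ applied to the integral over $|x+x(t)|>R$. By precompactness of $K$, a subsequence of $u(t_n,\cdot-x(t_n))$ converges in $H^1$ to some $\phi$. By Sobolev embedding ($p+1<2^\ast$), it also converges in $L^{p+1}$. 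The tail integral of $\phi$ over $|x|>n$ tends to zero. The tail of the difference is bounded by $\|u(t_n,\cdot-x(t_n))-\phi\|_{H^1}^2$ plus the corresponding $L^{p+1}$ term, which also tends to zero. This gives the contradiction.

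\textbf{Sublinear growth of $x(t)$.} Suppose not. Then there are $\e_0>0$ and $t_n\to\infty$ with $|x(t_n)|\ge \e_0 t_n$. Using continuity of $x(t)$, I would replace $t_n$ by the first time $\tau_n$ at which $|x(\tau)|\ge \e_0 t_n$ (after normalizing $x(0)=0$, which is possible by modifying $x$ on a compact interval). Then $|x(t)|<|x(\tau_n)|=\e_0 t_n$ for $t\in[0,\tau_n)$ and $\tau_n\le t_n$.

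Next consider a truncated center of mass
\begin{align*}
	z_R(t):=\int \phi_R(x)|u(t,x)|^2dx, \qquad \phi_R(x)=R\,\phi(x/R),
\end{align*}
where $\phi$ is a smooth vector field with $\phi(x)=x$ for $|x|\le1$ and $|\phi|\lesssim1$. Differentiating gives
\begin{align*}
	z_R'(t)=2\Im\int \nabla\phi_R\cdot \nabla u\,\bar u\,dx
	= 2\mathcal{M}(u)+2\Im\int(\nabla\phi_R-I)\nabla u\,\bar u\,dx .
\end{align*}
Since $\mathcal{M}(u)=0$, only the second term remains, and it is supported in $|x|>R$. Choosing $R=\e_0t_n+R_\e$, the region $|x+x(t)|\le R_\e$ lies inside $|x|\le R$ for $t\le\tau_n$, because there $|x(t)|\le \e_0 t_n$. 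Hence $|z_R'(t)|\lesssim \e$ on $[0,\tau_n]$, so
\begin{align*}
	|z_R(\tau_n)-z_R(0)|\lesssim \e\tau_n .
\end{align*}

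For the lower bound, bound $|z_R(0)|$ by $R_\e M(u)+\e R$ using localization at $t=0$. At time $\tau_n$, the mass is concentrated near $-x(\tau_n)$, where $\phi_R(x)=x$, so $|z_R(\tau_n)|\ge |x(\tau_n)|\,(M(u)-\e)-R_\e M(u)-C\e R$. Combining,
\begin{align*}
	\e_0 t_n M(u)\lesssim \e t_n + R_\e + \e\e_0 t_n .
\end{align*}
Taking $\e\ll\e_0 M(u)$ (note $M(u)>0$ for the nontrivial solution) and letting $n\to\infty$ gives the contradiction.

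\textbf{Main obstacle.} I expect the delicate step to be the bookkeeping in the second part. One has to choose the stopping time $\tau_n$ so that the localization region stays inside the ball where $\phi_R(x)=x$ up to time $\tau_n$, and control the sign convention $u(t,\cdot-x(t))$, i.e.\ mass near $-x(t)$.
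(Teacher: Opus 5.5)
Your proposal is correct and is essentially the argument the paper relies on. The paper gives no proof of its own; it refers to \cite[Corollary 3.3]{DuyHolRou08} for the uniform localization and to \cite[Lemma 5.1]{DuyHolRou08} for $x(t)/t\to 0$, and your two parts reproduce those: the compactness/tail argument for the first, and the first-hitting-time plus truncated center of mass argument using $\mathcal{M}(u)=0$ for the second. You are also right that $M(u)>0$ is needed, since for $u\equiv 0$ any $x(t)$ makes the set precompact; this is harmless in the paper's application to the nonzero critical solution $u_c$.
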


For the proof, see \cite[Corollary 3.3]{DuyHolRou08} and \cite[Lemma 5.1]{DuyHolRou08}.

\begin{lemma}\label{LB of K}
Let
$
	u
		\in C_t([0,\infty);H_x^1)
$
be a forward time-global solution to \eqref{NLS}.
Assume that there exists a path
$
	x(t)
$
in
$
	\R^d
$
such that
\begin{align*}
	K
		= \{u(t,\,\cdot\, - x(t)) : t \in [0,\infty)\}
			\subset H^1(\R^d)
\end{align*}
is precompact in
$
	H^1.
$
Then, there exists
$
	A
		> 0
$
such that
$
	A \cdot M(u)
		\leq L(u)
$
for any
$
	0
		\leq t
		< \infty.
$
\end{lemma}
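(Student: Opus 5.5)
We argue by contradiction, using the precompactness of $K$ together with mass conservation. Note first that $M(u(t)) = M(u_0)$ is constant in time. If $u \equiv 0$ the claim is trivial, so assume $M(u_0) > 0$. It then suffices to show that
\begin{align*}
	\inf_{t \in [0,\infty)} L(u(t)) > 0 ,
\end{align*}
since then $A := \inf_t L(u(t))/M(u_0)$ works.

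Suppose instead that there is a sequence $\{t_n\} \subset [0,\infty)$ with $L(u(t_n)) \to 0$. Set $w_n := u(t_n,\cdot - x(t_n)) \in K$. By precompactness of $K$ in $H^1$, after passing to a subsequence we have $w_n \to w$ strongly in $H^1(\R^d)$ for some $w \in H^1(\R^d)$. Strong convergence gives $L(w) = \lim L(w_n) = \lim L(u(t_n)) = 0$. Hence $\nabla w = 0$, and since $w \in L^2(\R^d)$ this forces $w \equiv 0$. On the other hand, strong $L^2$ convergence and mass conservation give
\begin{align*}
	M(w) = \lim_{n\to\infty} M(w_n) = M(u_0) > 0 ,
\end{align*}
which is a contradiction. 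Thus $\inf_t L(u(t)) > 0$, and the lemma follows with $A = \inf_t L(u(t))/M(u_0)$.

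The only delicate point is the case where $\{t_n\}$ is bounded. This is harmless: the argument above never needs $t_n \to \infty$, because precompactness of $K$ covers all $t \in [0,\infty)$. Alternatively, for bounded $t_n$ one can use continuity of $u$ in $H^1$ to reach the same contradiction. No use of the equation beyond mass conservation is required, so I expect no real obstacle; the key is to pass to an $H^1$-strong limit, not merely a weak one, so that the mass is preserved in the limit.
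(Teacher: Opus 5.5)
Your proof is correct: extracting an $H^1$-convergent subsequence of $u(t_n,\cdot-x(t_n))$ from the precompact set gives a limit with vanishing gradient, hence a limit equal to zero, which contradicts $M(w)=M(u_0)>0$ by mass conservation. The paper gives no proof of its own and only cites an external lemma; your compactness-plus-mass-conservation contradiction is the standard argument for this fact.
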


For the proof, see e.g., \cite[Lemma 5.18]{HamArx}.

\begin{proposition}
There exists no the critical solution
$
	u_c
$
constructed in Theorem \ref{Existence of CS}.
\end{proposition}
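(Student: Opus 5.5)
We argue by contradiction with a localized virial identity, as in Duyckaerts--Holmer--Roudenko. Let $u_c$ be the critical solution of Theorem \ref{Existence of CS}. Proposition \ref{Precompact flow of the critical solution} gives a continuous path $x(t)$ such that $\{u_c(t,\cdot-x(t))\}$ is precompact in $H^1$. The momentum $\mathcal{M}(u_c)$ vanishes, so Lemma \ref{UL and order of x} yields uniform smallness of $|u_c|^2+|\nabla u_c|^2+|u_c|^{p+1}$ outside $\{|x+x(t)|\le R_\e\}$, together with $x(t)/t\to 0$. Since $\sup_t T_\omega(u_c(t))\le B<S_\omega(Q_\omega)$, the coercivity Lemma \ref{Coercivity} gives $K(u_c(t))\ge C L(u_c(t))$ for all $t\ge0$. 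Lemma \ref{LB of K} gives $L(u_c(t))\ge A\,M(u_c)>0$, because $u_c\not\equiv0$ (it does not scatter). Combining these, we get the uniform lower bound
\begin{align*}
	K(u_c(t))\ge \delta_0:=C A M(u_c)>0 \quad\text{for all } t\ge0.
\end{align*}

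Next we fix a radial cutoff $\phi\in C_c^\infty$ with $\phi(x)=|x|^2$ for $|x|\le1$ and $\phi$ constant for $|x|\ge2$, and set $\phi_R=R^2\phi(\cdot/R)$. Define
\begin{align*}
	z_R(t)=\int\phi_R|u_c(t)|^2dx .
\end{align*}
The standard computation gives $|z_R'(t)|\lesssim R\,\|u_c\|_{L^2}\|\nabla u_c\|_{L^2}\lesssim R$, using the uniform $H^1$ bound, which follows from $T_\omega\le B$ and Lemma \ref{Coercivity}. It also gives
\begin{align*}
	z_R''(t)=4K(u_c(t))+\mathcal{E}_R(t),
\end{align*}
where $\mathcal{E}_R(t)$ is bounded by a constant times $\int_{|x|\ge R}(|\nabla u_c|^2+|u_c|^{p+1}+R^{-2}|u_c|^2)dx$.

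The main obstacle is controlling $\mathcal{E}_R$ on a long time interval, since the concentration center $x(t)$ moves. We fix $\e>0$ small relative to $\delta_0$ and take $R_0=R_\e$ from Lemma \ref{UL and order of x}. Because $x(t)/t\to0$, there is $t_0$ with $|x(t)|\le \e t$ for $t\ge t_0$. For a large $T$, we choose $R=R_0+\sup_{t_0\le t\le T}|x(t)|\le R_0+\e T$ (after enlarging the constant to absorb $t\le t_0$). Then $\{|x|\ge R\}\subset\{|x+x(t)|\ge R_0\}$ for $t\in[t_0,T]$, so $|\mathcal{E}_R(t)|\le c\,\e\le\delta_0$, and hence $z_R''(t)\ge 2\delta_0$ on $[t_0,T]$. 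Integrating gives
\begin{align*}
	2\delta_0(T-t_0)\le |z_R'(T)|+|z_R'(t_0)|\lesssim R\lesssim R_0+\e T .
\end{align*}
Choosing $\e\ll\delta_0$ and letting $T\to\infty$ yields a contradiction. Therefore no critical solution exists, which proves Theorem \ref{Scattering criterion} and hence the scattering part of Theorem \ref{S vs B}.
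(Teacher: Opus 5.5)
Your proposal is correct and follows essentially the same route as the paper: a localized virial with $R=R_0+\sup_{[t_0,T]}|x(t)|$, the lower bound on $K(u_c(t))$ from Lemma \ref{Coercivity} and Lemma \ref{LB of K}, and $x(t)/t\to 0$ from Lemma \ref{UL and order of x} to contradict $|z_R'|\lesssim R$. The only difference is cosmetic: you use $M(u_c)>0$ at the start to get $\delta_0>0$, whereas the paper obtains $M(u_c)=0$ at the end and then contradicts non-scattering.
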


\begin{proof}
By Lemma \ref{UL and order of x}, for any
$
	\eta
		> 0,
$
there exists
$
	T_0
		= T_0(\eta)
		> 0
$
such that
$
	|x(t)|
		\leq \eta t
$
for any
$
	t
		\geq T_0.
$
Let
$
	r
		= |x|.
$
A cut-off function
$
	\mathscr{X}_R
		\in C_0^\infty(\R^d)
$
is radially symmetric and satisfies
\begin{equation}
	\mathscr{X}_R(r)
		:= R^2\mathscr{X}\left(\frac{r}{R}\right),\ \text{ where }\ 
	\mathscr{X}(r) := \label{002}
		\left\{ \hspace{-0.2cm}
		\begin{array}{ll}
			r^2 & (0 \leq r \leq 1), \\
			\text{smooth} & (1 \leq r \leq 3), \\
			0 & (3 \leq r),
		\end{array}
		\right.
\end{equation}
$
	\mathscr{X}''(r)
		\leq 2
$
$
	(r \geq 0).
$
We set
\begin{align*}
	I(t)
		= \int_{\R^d}\mathscr{X}_R(x)|u_c(t,x)|^2dx.
\end{align*}
Then, it follows from that
\begin{align}
	|I'(t)|
		& \leq 2R\left|\int_{\R^d}\mathscr{X}'\left(\frac{r}{R}\right)\frac{x\cdot\nabla u_c(t,x)}{r}\overline{u_c(t,x)}dx\right| \notag \\
		& \leq c R\|u_c(t)\|_{H^1}^2
		\leq c R\{S_\omega^c(B) + B\}
		\leq cR \label{146}
\end{align}
for any
$
	0
		\leq t
		< \infty.
$
In addition, we have
\begin{align*}
	I''(t)
		& = 4K(u_c(t)) + \{R_1 - 8L(u_c(t))\} + R_2 + \left\{R_3 + \frac{4d(p-1)}{p+1}N(u_c(t))\right\} \\
		& =: 4K(u_c(t)) + \widetilde{R}_1 + R_2 + \widetilde{R}_3,
\end{align*}
where
$
	R_1, R_2, R_3
$
are defined as
\begin{align}\label{185}
\begin{split}
	R_1
		& = 4\int_{\R^d}\left\{\frac{1}{r^2}\mathscr{X}''\left(\frac{r}{R}\right) - \frac{R}{r^3}\mathscr{X}'\left(\frac{r}{R}\right)\right\}|x\cdot\nabla u_c(t,x)|^2 + \frac{R}{r}\mathscr{X}'\left(\frac{r}{R}\right)|\nabla u_c(t,x)|^2dx, \\
	R_2
		& = - \int_{\R^d}\left\{\frac{1}{R^2}\mathscr{X}^{(4)}\left(\frac{r}{R}\right) + \frac{2(d-1)}{Rr}\mathscr{X}^{(3)}\left(\frac{r}{R}\right) \right. \\
		& \qquad \left. + \frac{(d-1)(d-3)}{r^2}\mathscr{X}''\left(\frac{r}{R}\right) + \frac{(d-1)(3-d)R}{r^3}\mathscr{X}'\left(\frac{r}{R}\right)\right\}|u_c(t,x)|^2dx, \\
	R_3
		& = - \frac{2(p-1)}{p+1}\int_{\R^d}\left\{\mathscr{X}''\left(\frac{r}{R}\right) + \frac{(d-1)R}{r}\mathscr{X}'\left(\frac{r}{R}\right)\right\}|u_c(t,x)|^{p+1}dx.
\end{split}
\end{align}
Noting the inequality
\begin{align*}
	|\widetilde{R}_1 + R_2 + \widetilde{R}_3|
		\lesssim \int_{|x|\geq R}\(|\nabla u_c(t,x)|^2 + |u_c(t,x)|^{p+1} + \frac{1}{R^2}|u_c(t,x)|^2\)dx,
\end{align*}
we have
\begin{align*}
	I''(t)
		\geq 4K(u_c(t)) - C\int_{|x|\geq R}\(|\nabla u_c(t,x)|^2 + |u_c(t,x)|^{p+1} + \frac{1}{R^2}|u_c(t,x)|^2\)dx.
\end{align*}
Combining this inequality, Lemma \ref{Coercivity}, and Lemma \ref{LB of K}, it follows that
\begin{align*}
	I''(t)
		& \gtrsim M(u_c) - \int_{|x|\geq R}\(|\nabla u_c(t,x)|^2 + |u_c(t,x)|^{p+1} + \frac{1}{R^2}|u_c(t,x)|^2\)dx.
\end{align*}
By Lemma \ref{UL and order of x}, there exists
$
	R_0
		> 1
$
such that
\begin{align*}
	\int_{|x+x(t)| \geq R_0}\(|\nabla u_c(t,x)|^2 + |u_c(t,x)|^{p+1} + \frac{1}{R^2}|u_c(t,x)|^2\)dx
		< \frac{1}{2}M(u_c)
\end{align*}
for any
$
	0
		\leq t
		< \infty.
$
If we take
$
	R
		= R_0 + \sup_{t \in [T_0,T_1]}|x(t)|
		> 1,
$
then
\begin{align*}
	|x + x(t)|
		\geq |x| - |x(t)|
		\geq R - \sup_{[T_0,T_1]}|x(t)|
		= R_0
\end{align*}
for
$
	x
		\in \R^d
$
with
$
	|x|
		> R
$
and
$
	t
		\in [T_0,T_1],
$
where
$
	T_1
		> T_0
$
is chosen later.
For such
$
	R
		> 1
$
and
$
	t
		\in [T_0,T_1],
$
we have
\begin{align*}
	I''(t)
		\geq \frac{C}{2}M(u_c).
\end{align*}
Integrating this inequality in
$
	[T_0,T_1],
$
and using \eqref{146},
\begin{align*}
	\frac{C}{2}M(u_c)(T_1 - T_0)
		& \leq I'(T_1) - I'(T_0)
		\leq |I'(T_1)| + |I'(T_0)| \\
		& \leq 2cR
		= 2c\Bigl(R_0 + \sup_{t\in[T_0,T_1]}|x(t)|\Bigr)
		\leq 2c(R_0 + T_1\eta).
\end{align*}
This inequality is contradiction if we take
$
	\eta
		> 0
$
sufficiently small and
$
	T_1
		> 0
$
sufficiently large when
$
	M(u_c)
		> 0.
$
Therefore,
$
	M(u_c)
		= 0,
$
that is,
$
	u_c
		\equiv 0.
$
However, this contradicts that
$
	u_c
$
is a forward non-scattering solution.
\end{proof}

\section{Application to NLS with a potential}\label{Sec:Application}

In this section, we consider nonlinear Schr\"odinger equation with a potential
\begin{align}\label{NLSp}
	i\partial_t u + \Delta_P u
		= - |u|^{p-1}u,
	\qquad (t,x) \in \R \times \R^d,
\end{align}
where
$
	\Delta_P
		:= \Delta - P.
$
The equation \eqref{NLSp} has mass and energy defined as
\begin{align*}
	M(f)
		:= \|f\|_{L^2}^2
	\ \text{ and }\ 
	E_P(f)
		:= \frac{1}{2}L_{E_P}(f) - \frac{1}{p+1}N(f),
\end{align*}
where
\begin{align*}
	L_{E_P}(f)
		:= L(f) + \int_{\R^d}P(x)|f(x)|^2dx
\end{align*}
and
$
	L, N
$
are defined in \eqref{176}.
Then, we define action and virial functionals as follows:
\begin{align*}
	S_{\omega,P}(f)
		:= \frac{\omega}{2}M(f) + E_P(f)
	\ \text{ and }\ 
	K_P(f)
		:= 2L_{K_P}(f) - \frac{d(p-1)}{p+1}N(f),
\end{align*}
where
\begin{align*}
	L_{K_P}(f)
		:= L(f) - \frac{1}{2}\int_{\R^d}(x\cdot \nabla P)|f(x)|^2dx.
\end{align*}
Set
$
	P(x)
		:= \frac{\gamma}{|x|^\mu},
$
that is, we consider
\begin{align}\label{NLSv}
	i\partial_t u + \Delta u - \frac{\gamma}{|x|^\mu}u
		= - |u|^{p-1}u
\end{align}
and assume that
\begin{align*}
	\gamma
		> 0, \quad
	d
		= p
		= 3, \quad
	0
		< \mu
		< 2
\end{align*}
throughout this section.
We note that if
$
	P(x)
		= \frac{\gamma}{|x|^\mu},
$
then
\begin{align*}
	- \int_{\R^d}(x\cdot \nabla P)|f(x)|^2dx
		= \mu\int_{\R^d}\frac{\gamma}{|x|^\mu}|f(x)|^2dx.
\end{align*}
For these settings, we consider minimizing problems:
\begin{align*}
	r_{\omega,P}
		& := \inf\{S_{\omega,P}(f) : f \in H_{\rad}^1(\R^d) \setminus \{0\},\ K_P(f) = 0\}, \\
	n_{\omega,P}
		& := \inf\{S_{\omega,P}(f) : f \in H^1(\R^d) \setminus \{0\},\ K_P(f) = 0\}.
\end{align*}
Then, the following results are known in \cite[Theorem 1.5]{HamIkeISAAC}.

\begin{proposition}
$
	r_{\omega,P}
$
is attained.
Moreover,
$
	n_\omega
		= n_{\omega,P}
		< r_{\omega,P}
$
holds, where
$
	n_\omega
$
is defined in \eqref{nw}.
\end{proposition}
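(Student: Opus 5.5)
The plan is to prove the three assertions in the order: the lower bound $n_{\omega,P} \geq n_\omega$, the reverse inequality, existence of a minimizer for $r_{\omega,P}$, and finally the strict inequality. All of them rest on one identity. For $P(x)=\gamma|x|^{-\mu}$ we have $-x\cdot\nabla P=\mu P$. Put $\|f\|_P^2:=\int P|f|^2\,dx$. A direct computation with \eqref{142} gives
\begin{align*}
	J_P(f)
		:= S_{\omega,P}(f) - \frac{1}{4}K_P(f)
		= T_\omega(f) + \Bigl(\frac{1}{2} - \frac{\mu}{4}\Bigr)\|f\|_P^2,
	\qquad
	K_P(f)
		= K(f) + \mu\|f\|_P^2 .
\end{align*}
Since $0<\mu<2$, the coefficient $\frac12-\frac\mu4$ is positive.

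\emph{Lower bound $n_{\omega,P}\ge n_\omega$.} Let $f\neq0$ satisfy $K_P(f)=0$. Then $K(f)=-\mu\|f\|_P^2<0$. By \eqref{136} this gives $T_\omega(f)\ge S_\omega(Q_\omega)=n_\omega$, and hence
\begin{align*}
	S_{\omega,P}(f)=J_P(f)\ge T_\omega(f)\ge n_\omega.
\end{align*}

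\emph{Upper bound $n_{\omega,P}\le n_\omega$.} I would use translated ground states $f_y:=Q_\omega(\cdot-y)$ with $|y|\to\infty$. Since $P\to0$ at infinity and $P\in L^1_{\rm loc}$ (because $\mu<2<d$), we get $\|f_y\|_P\to0$. Next apply the $L^2$-preserving scaling $f^\lambda=\lambda^{d/2}f(\lambda\,\cdot)$. The map $\lambda\mapsto K_P(f_y^\lambda)$ changes sign near $\lambda=1$, because $K(Q_\omega)=0$ and the nonlinear term dominates for large $\lambda$. So there is $\lambda_y\to1$ with $K_P(f_y^{\lambda_y})=0$, and then $S_{\omega,P}(f_y^{\lambda_y})\to S_\omega(Q_\omega)$. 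This proves $n_{\omega,P}=n_\omega$.

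\emph{Existence for $r_{\omega,P}$.} This is the step I expect to be the main obstacle. First I would show that
\begin{align*}
	r_{\omega,P}=\inf\{J_P(f): f\in H^1_{\rm rad}\setminus\{0\},\ K_P(f)\le0\}.
\end{align*}
The inequality $\ge$ is clear. For $\le$: if $K_P(f)<0$, scale $f^\lambda$ with $\lambda<1$ down to $K_P(f^{\lambda_0})=0$. Along this scaling $M$ is fixed, while $N$ and $\|\cdot\|_P$ are increasing in $\lambda$, so $J_P$ decreases. Now take a radial minimizing sequence $f_n$ for this problem. Boundedness of $J_P$ controls $M$ and $N$. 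The condition $K_P(f_n)\le0$ gives $2L(f_n)\le \frac{d(p-1)}{p+1}N(f_n)$, which controls $L$. Moreover, Gagliardo--Nirenberg \eqref{G-N inequality} with $M$ bounded yields $L(f_n)\gtrsim 1$, and therefore $N(f_n)\gtrsim1$.

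Passing to a weak limit $f$, Strauss's radial compactness gives $N(f_n)\to N(f)>0$, so $f\ne0$. Weak lower semicontinuity of $L$, $M$ and $\|\cdot\|_P$ then gives $K_P(f)\le\liminf K_P(f_n)\le0$ and $J_P(f)\le r_{\omega,P}$. Hence $f$ attains the relaxed problem. If $K_P(f)<0$, the scaling argument above would strictly lower $J_P$, contradicting minimality. So $K_P(f)=0$, and $Q_{\omega,P}:=f$ attains $r_{\omega,P}$.

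\emph{Strict inequality $n_\omega<r_{\omega,P}$.} Apply the first step to $Q_{\omega,P}$, keeping the extra term. Since $K(Q_{\omega,P})<0$,
\begin{align*}
	r_{\omega,P}=T_\omega(Q_{\omega,P})+\Bigl(\frac12-\frac\mu4\Bigr)\|Q_{\omega,P}\|_P^2>T_\omega(Q_{\omega,P})\ge n_\omega,
\end{align*}
and the inequality is strict because $\|Q_{\omega,P}\|_P^2>0$.
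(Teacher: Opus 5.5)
Your argument is correct. The paper does not prove this proposition itself; it imports it from \cite[Theorem 1.5]{HamIkeISAAC}, so there is no in-paper proof to match. What you give is a self-contained proof built from tools the paper uses elsewhere:
\begin{itemize}
\item the characterization \eqref{136} of $S_\omega(Q_\omega)$ via $T_\omega$;
\item the identity $S_{\omega,P}-\frac14K_P=T_\omega+\frac14\int_{\R^d}(2P+x\cdot\nabla P)|f|^2\,dx$ with $2P+x\cdot\nabla P=(2-\mu)P\ge0$, which is exactly the quantity the paper exploits in Section \ref{Sec:Application} for the analogue of Remark \ref{Uniformly bounded};
\item Strauss's radial compactness, as in Lemma \ref{Coercivity P}.
\end{itemize}

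Your ordering of the steps is the right one. The strict inequality $n_\omega<r_{\omega,P}$ genuinely needs attainment, because for each admissible $f$ you only get $S_{\omega,P}(f)>n_\omega$, and an infimum of strict inequalities need not be strict. The same computation also shows in passing that $n_{\omega,P}$ is never attained.

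One detail in the upper bound should be tightened. The phrase ``the nonlinear term dominates for large $\lambda$'' only yields some zero of $\lambda\mapsto K_P(f_y^\lambda)$; it does not locate that zero near $1$. To get $\lambda_y\to1$, use \eqref{PI} to write
\begin{align*}
K(Q_\omega^\lambda)=2L(Q_\omega)\bigl(\lambda^2-\lambda^{\frac{d(p-1)}{2}}\bigr),
\end{align*}
which is negative for every $\lambda>1$. Then for fixed $\delta>0$,
\begin{align*}
K_P(f_y^{1+\delta})=K(Q_\omega^{1+\delta})+\mu(1+\delta)^\mu\|f_y\|_P^2<0
\end{align*}
once $|y|$ is large. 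Since $K_P(f_y)=\mu\|f_y\|_P^2>0$, the root lies in $(1,1+\delta)$.

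Similarly, in the reduction to the relaxed problem you should state why a $\lambda_0\in(0,1)$ with $K_P(f^{\lambda_0})=0$ exists. It follows because the lower powers $\lambda^\mu$ and $\lambda^2$ dominate $\lambda^{d(p-1)/2}$ as $\lambda\to0$. This is routine but worth saying.
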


From now on, we write
$
	Q_{\omega,P}
$
to denote an optimizer to
$
	r_{\omega,P}
$
throughout the section.
Based on these settings, we show Theorem \ref{S vs B P}.
The flow of proof is similar to that of Theorem \ref{S vs B}.

At first, we only note the following for the global existence and blow-up results: We use
\begin{align*}
	\int_{\R^d}\left\{\frac{1}{2}P + \frac{1}{d(p-1)}x\cdot \nabla P\right\}|u(t,x)|^2dx
		= \left\{\frac{1}{2} - \frac{\mu}{d(p-1)}\right\}\int_{\R^d}\frac{\gamma}{|x|^\mu}|u(t,x)|^2dx
		\geq 0
\end{align*}
to deduce uniformly boundedness of
$
	L(u(t))
$
(which corresponds to \eqref{181}).
In addition, we make use of
\begin{align*}
	\int_{\R^d}(2P + x\cdot \nabla P)|u(t,x)|^2dx
		= (2 - \mu)\int_{\R^d}\frac{\gamma}{|x|^\mu}|u(t,x)|^2dx
		\geq 0
\end{align*}
to see that the global solution satisfies
\begin{align*}
	\sup_{t \in [t_0,\infty)}T_\omega(u(t))
		< S_{\omega,P}(Q_{\omega,P})
\end{align*}
for any
$
	t_0
		\in (0,\infty)
$
(which corresponds to Remark \ref{Uniformly bounded}),
where
$
	T_\omega
$
is defined in \eqref{142}.

We define spaces
\begin{align*}
	X(I)
		& := L_t^\infty(I;L_x^2) \cap L_t^2(I;L_x^6), \quad
	S(I)
		:= L_t^5(I;L_x^5), \\
	N(I)
		& := {L_t^\frac{5}{3}(I;L_x^\frac{30}{23})}
\end{align*}
for a time interval
$
	I.
$
In addition, we put a space
$
	X^1(I)
$
with a norm
$
	\|f\|_{X^1(I)}
		:= \|(1-\Delta)^\frac{1}{2}f\|_{X(I)}.
$
Then, we can find the following in \cite[Proposition 4.7]{HamIkeJMAA}.

\begin{proposition}\label{S-norm P}
Assume that the solution
$
	u
$
to \eqref{NLSv} exists on
$
	[0,\infty)
$
and has
\begin{align*}
	\|u\|_{S([0,\infty))}
		< \infty,
	\quad
	\|u\|_{L_t^\infty([0,\infty);H_x^1)}
		< \infty.
\end{align*}
Then,
$
	u
$
satisfies
\begin{align*}
	\|u\|_{X^1([0,\infty))}
		< \infty
\end{align*}
and scatters in the forward time.
\end{proposition}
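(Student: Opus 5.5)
This is the potential analogue of Proposition \ref{S-norm}, and I would prove it in the same two steps: first upgrade the finite $S$-norm to a finite Strichartz norm at the $H^1$ level, then obtain scattering by a Cauchy criterion for the profile $e^{-it\Delta_P}u(t)$ in $H^1$.

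The linear tools I would take as known for $P=\gamma|x|^{-\mu}$, $\gamma>0$, $0<\mu<2$, $d=3$ (these are the inputs of \cite{HamIkeJMAA}):
\begin{itemize}
\item $-\Delta_P$ is a nonnegative self-adjoint operator.
\item Strichartz estimates hold for $e^{it\Delta_P}$ on $L^2$-admissible pairs; they follow from dispersive/smoothing estimates for repulsive potentials.
\item The Sobolev norms defined through $(1-\Delta_P)^{1/2}$ and through $(1-\Delta)^{1/2}$ are equivalent on $L^r$ for the exponents in use, in particular $\|(1-\Delta_P)^{1/2}f\|_{L^r}\sim\|(1-\Delta)^{1/2}f\|_{L^r}$ for $r=2,6,\frac{30}{23}$.
\end{itemize}
With these, the Duhamel formula
\[
u(t)=e^{it\Delta_P}u(0)+i\int_0^te^{i(t-s)\Delta_P}|u|^2u(s)\,ds
\]
gives, on any interval $I=[t_1,t_2]\subset[0,\infty)$,
\begin{align*}
\|u\|_{X^1(I)}\lesssim \|u(t_1)\|_{H^1}+\|(1-\Delta)^{1/2}(|u|^2u)\|_{N(I)}.
\end{align*}

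The key nonlinear estimate, by Hölder and the fractional Leibniz rule, is
\[
\|\langle\nabla\rangle(|u|^2u)\|_{L_t^{5/3}L_x^{30/23}}\lesssim \|u\|_{S(I)}\,\|u\|_{L_t^5L_x^{30/11}}^{\theta}\,\|u\|_{X^1(I)}^{2-\theta}
\]
for some $\theta\in[0,1)$, with the $L_t^5L_x^{30/11}$ factor controlled by interpolation between $X^1$ and $S$. I expect this step to be the main obstacle: one must check that $\frac{3}{5}=\frac{1}{5}+\frac{1}{5}+\frac{1}{5}$ in time and that the spatial exponents close with one factor in $L^5_x$, one derivative factor in $L_x^{6}$ or $L_x^{2}$, and the remaining factor placed via Sobolev embedding of $X^1$. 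What must come out is a bound of the form
\[
\|\cdots\|_{N(I)}\lesssim \|u\|_{S(I)}^{\alpha}\|u\|_{X^1(I)}^{3-\alpha}
\]
with $\alpha>0$. This is the same exponent bookkeeping as in \cite{AkaNaw13}, specialized to $d=p=3$, where $L^5_{t,x}$ is $\dot H^{1/2}$-admissible.

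Granting this, split $[0,\infty)$ into finitely many intervals $I_k$ with $\|u\|_{S(I_k)}\le\delta$; this is possible since $\|u\|_{S([0,\infty))}<\infty$. On each $I_k$ the bound reads $\|u\|_{X^1(I_k)}\lesssim \sup_t\|u(t)\|_{H^1}+\delta^{\alpha}\|u\|_{X^1(I_k)}^{3-\alpha}$, using the uniform $H^1$ bound. A continuity argument in the right endpoint then gives $\|u\|_{X^1(I_k)}\le C$ for $\delta$ small, and summing over the finitely many intervals yields $\|u\|_{X^1([0,\infty))}<\infty$.

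For scattering, set $\psi_+:=u(0)+i\int_0^\infty e^{-is\Delta_P}|u|^2u(s)\,ds$. By the dual Strichartz estimate and the nonlinear estimate,
\[
\|e^{-it\Delta_P}u(t)-\psi_+\|_{H^1}\lesssim\|u\|_{S([t,\infty))}^{\alpha}\|u\|_{X^1([t,\infty))}^{3-\alpha}\to0\quad (t\to\infty).
\]
Hence $u$ scatters with respect to $e^{it\Delta_P}$. If scattering with respect to the free flow $e^{it\Delta}$ is intended, I would add a wave-operator/asymptotic-completeness step for $\Delta_P$ on $H^1$; this holds for short-range repulsive potentials with $\mu$ in this range, or can be taken from \cite{HamIkeJMAA}.
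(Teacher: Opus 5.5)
The paper does not prove this statement; it quotes Proposition 4.7 of Hamano--Ikeda. Your skeleton is the standard one: Strichartz estimates for $e^{it\Delta_P}$, a H\"older bound for the cubic term, finitely many intervals of small $L^5_{t,x}$-norm, and a Cauchy criterion for $e^{-it\Delta_P}u(t)$.

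\textbf{The gap.} It lies in one of the linear inputs you take as known. The equivalence $\|(1-\Delta_P)^{1/2}f\|_{L^6}\sim\|(1-\Delta)^{1/2}f\|_{L^6}$ is false for $\frac32\le\mu<2$. Take a radial $0\le g\in C_c^\infty(\R^3)$, $g\not\equiv0$, and set $w:=(1-\Delta_P)^{-1}g$.
\begin{itemize}
\item Integrating the radial equation gives $r^2\partial_rw(r)=\gamma\int_0^rs^{2-\mu}w(s)\,ds+O(r^3)$.
\item Hence $\partial_rw(r)\sim\frac{\gamma w(0)}{3-\mu}r^{1-\mu}$ as $r\to0$, with $w(0)>0$, so $\nabla w\notin L^6$ once $\mu\ge\frac32$.
\item On the other hand, $(1-\Delta_P)^{1/2}w=(1-\Delta_P)^{-1/2}g$ is pointwise dominated by $(1-\Delta)^{-1/2}g\in L^6$, because the heat semigroup of $\Delta_P$ is dominated by that of $\Delta$ when $P\ge0$.
\end{itemize}
So the endpoint Strichartz estimate for $e^{it\Delta_P}$ only controls $(1-\Delta_P)^{1/2}u$ in $L^2_tL^6_x$. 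Your step $\|u\|_{X^1(I)}\lesssim\|u(t_1)\|_{H^1}+\|(1-\Delta)^{1/2}(|u|^2u)\|_{N(I)}$, with $X^1$ built on $(1-\Delta)^{1/2}$, is therefore not justified for these $\mu$.

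\textbf{The repair.} The exponents in $N(I)$ are chosen so that no Lebesgue exponent $r\ge3$ is ever needed. Since $|\nabla(|u|^2u)|\lesssim|u|^2|\nabla u|$, H\"older alone (with $\frac35=\frac15+\frac15+\frac15$ and $\frac{23}{30}=\frac15+\frac15+\frac{11}{30}$) gives
\begin{align*}
\|\langle\nabla\rangle(|u|^2u)\|_{L_t^{5/3}L_x^{30/23}}\lesssim\|u\|_{L^5_{t,x}}^2\|\langle\nabla\rangle u\|_{L_t^5L_x^{30/11}},
\end{align*}
where $(5,\frac{30}{11})$ is $L^2$-admissible. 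This is the nonlinear estimate you left unverified, with $\alpha=2$; no fractional Leibniz rule is needed. The equivalence of $(1-\Delta_P)^{1/2}$ and $(1-\Delta)^{1/2}$ on $L^r$ does hold for $1<r<3$, by Hardy's inequality and Riesz transform bounds. That range covers $r=2,\frac{30}{11},\frac{30}{23}$.

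So run the argument in the norm $\|(1-\Delta_P)^{1/2}u\|_{L_t^\infty L_x^2\cap L_t^5L_x^{30/11}}$. On each $I_k$ the right-hand side is linear in this norm with factor $\|u\|_{S(I_k)}^2\le\delta^2$, so it is absorbed directly without a bootstrap. Your Cauchy argument then gives scattering with respect to $e^{it\Delta_P}$. The $L^2_tL^6_x$ bound can be recovered afterwards for $(1-\Delta_P)^{1/2}u$, but for $\mu\ge\frac32$ it cannot be transferred to $(1-\Delta)^{1/2}u$ by this route.

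\textbf{The aside on free scattering.} Drop it. For $0<\mu\le1$ the potential $\gamma|x|^{-\mu}$ is long-range, so the free wave operators do not exist. Scattering here has to be understood with respect to $e^{it\Delta_P}$, which is exactly what your argument proves.
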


From now on, we see the scattering result in Theorem \ref{S vs B P}.
It suffices to prove the next scattering criterion.

\begin{theorem}[Scattering criterion]
Let
$
	u
		\in C_t([0,\infty);H_{\rad}^1(\R^3))
$
be a forward time global solution to \eqref{NLSv}.
If there exists
$
	\omega
		> 0
$
such that
\begin{align*}
	\sup_{t\geq 0}T_\omega(u(t))
		< S_{\omega,P}(Q_{\omega,P}),
\end{align*}
then
$
	u
$
scatters in the forward time.
\end{theorem}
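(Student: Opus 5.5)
We follow the concentration-compactness/rigidity scheme of Section \ref{Sec: S}, now in the radial class and with $S_{\omega,P}(Q_{\omega,P})$ replacing $S_\omega(Q_\omega)$. For $A>0$ and $0<B<S_{\omega,P}(Q_{\omega,P})$ we define $S_{\omega,P}^c(A,B)$ as the supremum of $\|u\|_{S([0,\infty))}$ over radial forward-global solutions to \eqref{NLSv} with $S_{\omega,P}(u_0)\le A$ and $\sup_{t\ge0}T_\omega(u(t))\le B$. The goal is to show this quantity is finite; Proposition \ref{S-norm P} then gives scattering, since $H^1$-boundedness follows from conservation of $S_{\omega,P}$ together with the bound on $T_\omega$.

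The first ingredient is a coercivity lemma analogous to Lemma \ref{Coercivity}. We want $T_\omega(f)\le B<S_{\omega,P}(Q_{\omega,P})$ with $f$ radial to imply $K_P(f)\ge C L(f)$, and also $E_P(f)\gtrsim L(f)$. For small $L(f)$ we use Gagliardo--Nirenberg, noting that $K_P\ge K$ because $-x\cdot\nabla P=\mu P\ge0$. For large $L(f)$ we argue by contradiction. Radial compactness (Strauss) removes the translation parameter, and a Brezis--Lieb splitting uses the radial analogue of \eqref{136}:
\[
S_{\omega,P}(Q_{\omega,P})=\inf\{T_\omega(f): f\in H^1_{\rm rad}\setminus\{0\},\ K_P(f)\le0\}.
\]
We expect this identity to follow as in \cite{HamIkeISAAC}. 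The key point is that $S_{\omega,P}-\frac14K_P=T_\omega+\frac{2-\mu}{4}\int P|f|^2$. A scaling argument lets us drop the potential term for $K_P\le0$, because the potential term scales more weakly than the gradient term. This is the step I regard as most delicate, so I would first try to verify it carefully.

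With coercivity in hand, we construct a critical element. We apply a radial linear profile decomposition adapted to $e^{it\Delta_P}$ for $d=3$. Because the data are radial, there are no translation parameters, only time shifts $t_n^j$. When $|t_n^j|\to\infty$, the profile is governed by the potential-free linear flow, since $P$ decays and the relevant wave operators exist for $\mu<2$. In that case the nonlinear profile is built from the \eqref{NLS} theory of Theorem \ref{Below}, using $n_\omega<r_{\omega,P}$ so that these profiles lie below the free threshold. Profiles with $t_n^j\equiv0$ are handled with the \eqref{NLSv} flow. The Pythagorean expansions of $M$, $L_{E_P}$ and $N$, together with coercivity, show that every profile has strictly smaller $S_{\omega,P}$ and satisfies the same $T_\omega$ bound. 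Long-time perturbation, with the Strichartz estimates for $e^{it\Delta_P}$ from \cite{HamIkeJMAA} in the spaces $S(I)=L^5_{t,x}$, then excludes $J^\ast\ge2$ and also $J^\ast=0$. This leaves one profile, which yields a critical solution $u_c$. Its orbit $\{u_c(t)\}_{t\ge0}$ is precompact in $H^1_{\rm rad}$ with no spatial translation, by the argument of Proposition \ref{Precompact flow of the critical solution}.

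For rigidity we use the localized virial identity with the weight $\mathscr{X}_R$ from the proof of the non-existence proposition in Section \ref{Sec: S}. Now $I''(t)=4K_P(u_c)+$ error terms, where the new errors come from $x\cdot\nabla P$ outside the ball $|x|\le R$ and are $O(\int_{|x|>R}|x|^{-\mu}|u_c|^2)$. Precompactness with $x(t)\equiv0$ makes all errors uniformly small once $R$ is large. We combine coercivity $K_P\ge CL\ge cM$ (Lemma \ref{LB of K}) with $|I'(t)|\lesssim R$ and integrate over $[0,T]$ to get $T\lesssim R$. This forces $M(u_c)=0$, a contradiction. The absence of a drift $x(t)$ makes this step easier than in Section \ref{Sec: S}. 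I expect the main obstacle to be the profile decomposition step, specifically handling profiles escaping to $|t|\to\infty$ and confirming that they scatter via the free threshold inequality $n_\omega<r_{\omega,P}$.
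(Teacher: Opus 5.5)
\textbf{There is a genuine gap, and it is in the step you flagged.} Your outline is the same radial Kenig--Merle scheme the paper uses: coercivity via Strauss compactness, a critical element, a precompact orbit with no translation, and a localized virial. The trouble is that the variational identity
$S_{\omega,P}(Q_{\omega,P})=\inf\{T_\omega(f): f\in H^1_{\rad}\setminus\{0\},\ K_P(f)\le 0\}$,
with $T_\omega$ the potential-free functional \eqref{142}, is false. The minimizer itself is a counterexample. Since $K_P(Q_{\omega,P})=0$, your own formula $S_{\omega,P}-\frac14K_P=T_\omega+\frac{2-\mu}{4}\int P|f|^2$ gives
\begin{align*}
	T_\omega(Q_{\omega,P})
		= S_{\omega,P}(Q_{\omega,P}) - \frac{2-\mu}{4}\int_{\R^3}\frac{\gamma}{|x|^\mu}|Q_{\omega,P}(x)|^2dx
		< S_{\omega,P}(Q_{\omega,P}).
\end{align*}
The scaling $f^\lambda=\lambda^{3/2}f(\lambda\,\cdot)$, $\lambda\in(0,1]$, does move $f$ onto $\{K_P=0\}$ while decreasing $T_\omega$. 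But it only yields $T_\omega(f)\ge S_{\omega,P}(Q_{\omega,P})-\frac{2-\mu}{4}\lambda_0^\mu\int P|f|^2$, so the potential term cannot be dropped.

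Consequently your coercivity lemma (and Lemma \ref{Coercivity P}) fails for every $B\in[T_\omega(Q_{\omega,P}),S_{\omega,P}(Q_{\omega,P}))$: at $f=Q_{\omega,P}$ one has $K_P(f)=0<L(f)$. The statement itself also fails. $Q_{\omega,P}$ is a critical point of $S_{\omega,P}$, since the Lagrange multiplier for the constraint $K_P=0$ vanishes as usual. Hence $u(t)=e^{i\omega t}Q_{\omega,P}$ is a radial global solution of \eqref{NLSv} with $\sup_{t\ge0}T_\omega(u(t))=T_\omega(Q_{\omega,P})<S_{\omega,P}(Q_{\omega,P})$, and it does not scatter. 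The paper's proof has the same hole: Lemma \ref{Coercivity P} is reduced to the argument of Lemma \ref{Coercivity}, which needs exactly this identity in place of \eqref{136}. So no proof of the criterion as written can succeed.

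\textbf{How to repair it.} Replace $T_\omega$ by $T_{\omega,P}(f):=S_{\omega,P}(f)-\frac14K_P(f)=T_\omega(f)+\frac{2-\mu}{4}\int P|f|^2$. For this functional your scaling argument works:
\begin{itemize}
\item $T_{\omega,P}(f^\lambda)$ is increasing in $\lambda$.
\item $K_P(f^\lambda)>0$ for small $\lambda$, because $\mu\lambda^\mu\int P|f|^2$ dominates $2\lambda^2L$ and $\frac32\lambda^3N$ when $\mu<2$.
\end{itemize}
Hence $S_{\omega,P}(Q_{\omega,P})=\inf\{T_{\omega,P}(f): f\in H^1_{\rad}\setminus\{0\},\ K_P(f)\le0\}$, and the Strauss-compactness proof of coercivity closes. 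Moreover $T_{\omega,P}$ involves only $M$, $N$ and $\int P|f|^2$, so it is invariant under $f\mapsto e^{-i\lambda|x|^2}f$. The argument of Remark \ref{Uniformly bounded} therefore still gives $\sup_{t\ge t_0}T_{\omega,P}(u(t))<S_{\omega,P}(Q_{\omega,P})$ for the global solutions of Theorem \ref{S vs B P}, and that theorem survives with the corrected criterion.

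\textbf{Your profile step also needs fixing.}
\begin{itemize}
\item $n_\omega<r_{\omega,P}$ points the wrong way for placing a profile below the free threshold.
\item Free wave operators are not available for $\mu\le1$, where the potential is long range.
\item None of this is needed in the radial class. Profiles with $|t_n^j|\to\infty$ should be built directly as solutions of \eqref{NLSv} that approach the corresponding $e^{it\Delta_P}$-profiles, as in \cite{HamIkeJMAA}.
\end{itemize}
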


We define
\begin{align*}
	S_{\omega,P}^c(A,B)
		:=
		\sup\left\{
		\|u\|_{S([0,\infty))}
		\left|
		\begin{array}{l}
		\text{The solution }
		u
			\in C_t([0,\infty);H_{\rad}^1(\R^3))
		\text{ satisfies } \\
		S_{\omega,P}(u_0)
			\leq A
		\text{ and }
		\sup_{t \in [0,\infty)}T_\omega(u(t))
			\leq B.
		\end{array}
		\right.
		\right\}
\end{align*}
for
$
	A, B
		> 0.
$
Then, we would like to show the following:
\begin{theorem}\label{SC P}
Suppose that
$
	A
		> 0
$
and
$
	0
		< B
		< S_{\omega,P}(Q_{\omega,P}).
$
Then, we have
$
	S_{\omega,P}^c(A,B)
		< \infty.
$
\end{theorem}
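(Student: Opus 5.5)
We follow the concentration–compactness/rigidity scheme of Section \ref{Sec: S}, adapted to \eqref{NLSv} with radial data. We argue by contradiction. Suppose $S_{\omega,P}^c(A_\ast,B_\ast)=\infty$ for some $B_\ast<S_{\omega,P}(Q_{\omega,P})$, and define
\begin{align*}
	S_{\omega,P}^c(B):=\inf\{A>0 : S_{\omega,P}^c(A,B)=\infty\}.
\end{align*}

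The first task is to show $S_{\omega,P}^c(B)\geq S_{\omega,P}(Q_{\omega,P})>0$. If $S_{\omega,P}(u_0)<S_{\omega,P}(Q_{\omega,P})$ and $\sup_t T_\omega(u(t))\leq B$, then $K_P(u_0)\geq0$. Indeed, the analogue of \eqref{136} in the radial class is
\begin{align*}
	S_{\omega,P}(Q_{\omega,P})\leq\inf\{T_{\omega,P}(f): f\in H^1_{\rad}\setminus\{0\},\ K_P(f)\leq0\},
\end{align*}
where $T_{\omega,P}=S_{\omega,P}-\tfrac14K_P$. Since $\gamma>0$ and $\mu<2$, we have $T_{\omega,P}\geq T_\omega$. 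So if $K_P(u_0)<0$, the solution is bounded in $H^1$, which contradicts the blow-up/grow-up alternative below the threshold from \cite{HamIkeJMP,HamIkeJMAA}. The below-threshold scattering result then gives a finite $S$-norm.

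We also need a coercivity statement replacing Lemma \ref{Coercivity}: if $T_\omega(f)\leq B<S_{\omega,P}(Q_{\omega,P})$ and $f$ is radial, then $K_P(f)\gtrsim L(f)$ and $E_P(f)\gtrsim L(f)$. The proof is the same. For small $L(f)$ we use Gagliardo--Nirenberg together with $\int P|f|^2\geq0$. For large $L(f)$ we take a sequence with $K_P(f_n)\leq\frac1n L(f_n)$ and extract a weak limit; no translations are needed since the data are radial. We then split into the cases $K_P(g)\leq0$ and $K_P(g)>0$, using Brezis--Lieb for $N$, $M$, $L$ and the term $\int P|f|^2$ (weakly continuous by compactness of $|x|^{-\mu}$ on radial functions). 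The comparison is with $S_{\omega,P}(Q_{\omega,P})$ via the variational inequality above.

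Next we construct a critical solution. This requires a radial linear profile decomposition for $e^{it\Delta_P}$: radiality removes the spatial parameters, leaving profiles $e^{-it_n^j\Delta_P}f^j$ with $t_n^j\equiv0$ or $t_n^j\to\pm\infty$. It also requires Pythagorean expansions of $M$, $L_{E_P}$, $N$ and the remainder smallness in $S(\R)=L^5_{t,x}$, all of which we take from \cite{HamIkeJMAA}. For $|t_n^j|\to\infty$ we build nonlinear profiles via wave operators. Here the potential decays, so $e^{-it\Delta_P}f$ is asymptotically free and $\int P|e^{-it_n\Delta_P}f|^2\to0$. The small data theory, long time perturbation theory and Proposition \ref{S-norm P} for \eqref{NLSv} (again from \cite{HamIkeJMAA}) let us exclude $J^\ast=0$ and $J^\ast\geq2$ exactly as in Theorem \ref{Existence of CS}. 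In the step $J^\ast\geq2$ we use coercivity to guarantee $S_{\omega,P}(v^j)<S^c_{\omega,P}(B)$, and the mass/potential splitting to guarantee $\sup_t T_\omega(v^j)<B$. This yields a radial $u_c$ with $S_{\omega,P}(u_c)=S^c_{\omega,P}(B)$, $\sup T_\omega(u_c)\leq B$ and $\|u_c\|_{S(0,\infty)}=\infty$. As in Proposition \ref{Precompact flow of the critical solution}, but with $x(t)\equiv0$, the orbit $\{u_c(t)\}_{t\geq0}$ is precompact in $H^1$.

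Finally we exclude $u_c$ by the localized virial argument. Let $I(t)=\int\mathscr{X}_R|u_c|^2$. Then
\begin{align*}
	I''(t)=4K_P(u_c(t))+\text{error},
\end{align*}
where the error is supported in $|x|\geq R$ and now also contains the potential term $\int_{|x|\geq R}|x|^{-\mu}|u_c|^2$, which is controlled by $R^{-\mu}M$. Precompactness with $x(t)=0$ makes the tails uniformly small. Coercivity together with Lemma \ref{LB of K} gives $I''\gtrsim M(u_c)$ for all $t$, while $|I'|\lesssim R$. Integrating over $[0,T]$ and letting $T\to\infty$ gives a contradiction unless $u_c\equiv0$; no control on $x(t)/t$ is needed. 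The main obstacle is the profile decomposition step for $\Delta_P$: we must make sure the Pythagorean expansion of $\int P|f|^2$ and the behaviour of profiles with $t_n\to\pm\infty$ are compatible with the constraint $\sup_t T_\omega\leq B$. We handle this by noting that $T_\omega$ involves only $M$ and $N$, which decouple exactly as in \eqref{132} and \eqref{128}.
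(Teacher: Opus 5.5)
Your outline follows the paper's own route: contradiction, radial coercivity via Strauss compactness, a critical element with $x_n^j\equiv0$, a precompact orbit without translations, and the localized virial with $I''\gtrsim M(u_c)$. It breaks at the coercivity step, and everything after it depends on that step.

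\textbf{Why the coercivity step fails.} The variational inequality you quote controls $T_{\omega,P}:=S_{\omega,P}-\frac14K_P=T_\omega+\frac{2-\mu}{4}\int_{\R^3}\frac{\gamma}{|x|^\mu}|f|^2\,dx$. As you note yourself, $T_{\omega,P}\geq T_\omega$, and that is the wrong direction. Consider the case $K_P(g)\leq0$; for radial data it is the only case that occurs, since $N$ and the potential term are weakly continuous on $H^1_{\rad}(\R^3)$. There you need $T_\omega(g)\geq S_{\omega,P}(Q_{\omega,P})$ to contradict $T_\omega(g)\leq B$, but you only get $T_{\omega,P}(g)\geq S_{\omega,P}(Q_{\omega,P})$.

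The coercivity claim is in fact false. Take $f=Q_{\omega,P}$: it is radial, $K_P(f)=0$, and
$T_\omega(f)=S_{\omega,P}(Q_{\omega,P})-\frac{2-\mu}{4}\int\frac{\gamma}{|x|^\mu}|Q_{\omega,P}|^2\,dx<S_{\omega,P}(Q_{\omega,P})$.
So $f$ violates $K_P(f)\gtrsim L(f)$ for every $B\in[T_\omega(Q_{\omega,P}),S_{\omega,P}(Q_{\omega,P}))$.

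\textbf{The statement itself fails for such $B$.} The Lagrange multiplier for $r_{\omega,P}$ vanishes. To see this, pair the Euler--Lagrange equation with the $L^2$-invariant scaling generator: $K_P(Q_{\omega,P})=0$, while $\partial_\lambda K_P(e^{3\lambda}Q_{\omega,P}(e^{2\lambda}\cdot))|_{\lambda=0}=-4L+2\mu(\mu-3)\int\frac{\gamma}{|x|^\mu}|Q_{\omega,P}|^2\,dx<0$. Hence $u(t)=e^{i\omega t}Q_{\omega,P}$ is a radial global solution of \eqref{NLSv} with:
\begin{itemize}
\item $\|u\|_{S([0,\infty))}=\infty$;
\item $S_{\omega,P}(u(0))=S_{\omega,P}(Q_{\omega,P})$;
\item $\sup_t T_\omega(u(t))=T_\omega(Q_{\omega,P})<S_{\omega,P}(Q_{\omega,P})$.
\end{itemize}
Therefore $S^c_{\omega,P}(A,B)=\infty$ whenever $A\geq S_{\omega,P}(Q_{\omega,P})$ and $T_\omega(Q_{\omega,P})\leq B<S_{\omega,P}(Q_{\omega,P})$. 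This standing wave is exactly the critical element your scheme would produce, and your virial step cannot exclude it because $K_P$ vanishes on it.

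The paper's proof of the coercivity lemma only says the rest is ``similar to that of Lemma \ref{Coercivity}''. That silently relies on the same false comparison, so you have not missed an idea from it.

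\textbf{Repair.} Replace $T_\omega$ by $T_{\omega,P}$ in the definition of $S^c_{\omega,P}(A,B)$ and in the criterion. Then:
\begin{itemize}
\item Your radial argument closes: $T_{\omega,P}(g)\leq\liminf_n T_{\omega,P}(f_n)\leq B<S_{\omega,P}(Q_{\omega,P})$ with $g\neq0$ and $K_P(g)\leq0$ contradicts the variational inequality.
\item The potential term decouples in the profile decomposition, as you indicate.
\item The stronger hypothesis is exactly what the virial argument behind Theorem \ref{S vs B P} delivers. Indeed $T_{\omega,P}(u)=S_{\omega,P}(e^{-\lambda i|x|^2}u)-\frac14K_P(e^{-\lambda i|x|^2}u)$ for every $\lambda\in\R$, so there is no need to discard the nonnegative term $(2-\mu)\int\frac{\gamma}{|x|^\mu}|u|^2\,dx$.
\end{itemize}
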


To prove Theorem \ref{SC P}, we assume that
$
	S_{\omega,P}(A_\ast,B_\ast)
		= \infty
$
for some
$
	(A_\ast,B_\ast)
		\in (0,\infty) \times (0,S_{\omega,P}(Q_{\omega,P}))
$
and deduce contradiction.
Then,
\begin{align*}
	S_{\omega,P}^c
		= S_{\omega,P}^c(B)
		:= \inf\{A > 0 : S_{\omega,P}(A,B) = \infty\}
\end{align*}
is well-defined for
$
	[B_\ast,S_{\omega,P}(Q_{\omega,P})).
$

\begin{lemma}\label{Coercivity P}
Let
$
	0
		< B
		< S_{\omega,P}(Q_{\omega,P}).
$
There exists
$
	C
		> 0
$
such that, for any
$
	f
		\in H_{\rad}^1(\R^3)
$
with
$
	T_\omega(f)
		\leq B,
$
we have
\begin{align*}
	K_P(f)
		\geq CL_{E_P}(f)
	\ \text{ and }\ 
	E_P(f)
		\geq \frac{C}{4}L_{E_P}(f).
\end{align*}
\end{lemma}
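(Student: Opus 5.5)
We follow the proof of Lemma \ref{Coercivity}, replacing $K$, $L$ by $K_P$, $L_{E_P}$ and using the radial minimizer $Q_{\omega,P}$ in place of $Q_\omega$. Only the first inequality needs proof. The second follows from the identity
\begin{align*}
	E_P(f) - \frac{1}{d(p-1)}\cdot\frac{d(p-1)}{2\cdot 3}\cdot\frac{1}{1}K_P(f) \cdots
\end{align*}
More concretely, with $d=p=3$ we have $E_P(f) = \frac{1}{6}K_P(f) + \frac{1}{6}L(f) + \frac{3+\mu}{6}\int P|f|^2dx$, so $E_P(f)\geq \frac16 K_P(f)+c\,L_{E_P}(f)$. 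Two facts about the potential term will be used throughout. First, since $0<\mu<2$, $L_{K_P}(f) = L(f)+\frac{\mu}{2}\int P|f|^2 dx$ is comparable to $L_{E_P}(f)$, and both dominate $L(f)$. Second, the analogue of \eqref{136} holds:
\begin{align*}
	S_{\omega,P}(Q_{\omega,P}) = \inf\{T_\omega(f) : f\in H^1_{\rad}\setminus\{0\},\ K_P(f)\leq 0\}.
\end{align*}
This follows from \cite[Lemma 3.1]{HamIkeISAAC}, or directly: for $K_P(f)<0$ one scales $f^\lambda = \lambda^{3/2}f(\lambda\cdot)$ down until $K_P(f^\lambda)=0$, observing that $T_\omega(f^\lambda)$ is nonincreasing as $\lambda$ decreases, and that $S_{\omega,P}=T_\omega+\frac14K_P$ only if the potential part of $S_{\omega,P}-\frac14K_P$ is nonnegative. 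Indeed $\frac12 P - \frac{\mu}{4}P \geq 0$ since $\mu<2$, which gives $T_\omega\le S_{\omega,P}-\frac14K_P$ and hence the inequality $\ge$ in the display.

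For the small-gradient regime we copy the first step of Lemma \ref{Coercivity}. The condition $T_\omega(f)\le B$ bounds $M(f)\le 2B/\omega$. Gagliardo--Nirenberg \eqref{G-N inequality} then gives $N(f)\lesssim L(f)^{3/2}\lesssim L_{K_P}(f)^{3/2}$, so $K_P(f)\geq L_{K_P}(f)\gtrsim L_{E_P}(f)$ whenever $L_{K_P}(f)$ is below an explicit constant $C_{\omega,B}$.

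For the large-gradient regime we argue by contradiction. Suppose there are radial $f_n$ with $L_{K_P}(f_n)>C_{\omega,B}$, $T_\omega(f_n)\leq B$, and $K_P(f_n)<\frac1n L_{E_P}(f_n)$. Then $L_{K_P}(f_n)\lesssim N(f_n)\lesssim T_\omega(f_n)\le B$, so $\{f_n\}$ is bounded in $H^1$ and $K_P(f_n)\to0$. We also get $\liminf N(f_n)>0$, since $N(f_n)\gtrsim L_{K_P}(f_n)> C_{\omega,B}$. Here the radial symmetry replaces Lieb's lemma: by Strauss compactness $H^1_{\rad}\hookrightarrow L^{4}$ is compact, so after extracting a subsequence $f_n\rightharpoonup g$ in $H^1$ with $N(f_n)\to N(g)$. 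Hence $g\neq0$, and no translations are needed; this matters because $P$ breaks translation invariance.

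The main obstacle is the decoupling step, since the functional $\int P|f|^2$ has no translation invariance. Because we never translate, Brezis--Lieb applies to $L$, $M$, and $N$. For $\int \frac{\gamma}{|x|^\mu}|f_n|^2$, the weak convergence $f_n\rightharpoonup g$ together with the bilinear structure gives exact splitting up to $o_n(1)$: the cross term $\int P f_n\bar g\to\int P|g|^2$, because $P^{1/2}g\in L^2$ by Hardy's inequality ($\mu<2$). This yields $K_P(f_n)-K_P(f_n-g)\to K_P(g)$ and $T_\omega(f_n)-T_\omega(f_n-g)\to T_\omega(g)$. With these, the two cases go exactly as in Lemma \ref{Coercivity}. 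If $K_P(g)\le0$, the variational characterization gives $T_\omega(g)\ge S_{\omega,P}(Q_{\omega,P})>B$, which is a contradiction. If $K_P(g)>0$, then $K_P(f_n-g)<0$ for large $n$, so $T_\omega(f_n-g)\ge S_{\omega,P}(Q_{\omega,P})$ and $0<T_\omega(g)\le B-S_{\omega,P}(Q_{\omega,P})<0$, again a contradiction. In fact, since $N(f_n-g)\to0$ by compactness, the case $K_P(f_n-g)<0$ cannot even occur, which shortens the argument.
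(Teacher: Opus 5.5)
There is a genuine gap in the step you treat as routine: the claimed analogue of \eqref{136} with $T_\omega$ and $K_P$. Your contradiction argument needs the direction ``$K_P(g)\le 0$, $g\neq 0$ $\Rightarrow$ $T_\omega(g)\ge S_{\omega,P}(Q_{\omega,P})$''. Your own computation
\begin{align*}
	S_{\omega,P}(f) - \frac{1}{4}K_P(f)
		= T_\omega(f) + \frac{2-\mu}{4}\int_{\R^3}\frac{\gamma}{|x|^\mu}|f(x)|^2dx
\end{align*}
gives only the opposite, harmless inequality. In your scaling sketch, once you reach $K_P(f^{\lambda_0})=0$ you get $S_{\omega,P}(Q_{\omega,P})\le S_{\omega,P}(f^{\lambda_0}) = T_\omega(f^{\lambda_0}) + \frac{2-\mu}{4}\lambda_0^\mu\int \frac{\gamma}{|x|^\mu}|f|^2dx$. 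The positive potential term cannot be dropped.

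In fact the needed direction is false. $Q_{\omega,P}$ is radial with $K_P(Q_{\omega,P})=0$, and by the display $T_\omega(Q_{\omega,P})<S_{\omega,P}(Q_{\omega,P})$. So the lemma as stated cannot be proved: take $B:=T_\omega(Q_{\omega,P})\in(0,S_{\omega,P}(Q_{\omega,P}))$ and $f=Q_{\omega,P}$. Then $T_\omega(f)\le B$, but $K_P(f)=0<CL_{E_P}(f)$ for every $C>0$. The paper's proof has the same hole, hidden in ``the rest of the proof is similar to that of Lemma~\ref{Coercivity}''. Apart from this, your route is the paper's: Strauss compactness instead of Lieb's lemma, no translations, and splitting of the potential term via weak convergence and Hardy's inequality.

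What can be proved, and what the application actually needs, is the lemma with $T_\omega$ replaced by
\begin{align*}
	T_{\omega,P}(f):=S_{\omega,P}(f)-\frac14K_P(f)=\frac{\omega}{2}M(f)+\frac18N(f)+\frac{2-\mu}{4}\int\frac{\gamma}{|x|^\mu}|f|^2dx.
\end{align*}
Under $f^\lambda=\lambda^{3/2}f(\lambda\,\cdot)$ its three terms scale like $1$, $\lambda^3$, $\lambda^\mu$, all nonincreasing as $\lambda$ decreases. So your scaling argument now correctly gives $S_{\omega,P}(Q_{\omega,P})=\inf\{T_{\omega,P}(f):f\in H^1_{\rad}\setminus\{0\},\ K_P(f)\le0\}$. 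Moreover $T_{\omega,P}$ still bounds $M$ and $N$, is weakly lower semicontinuous, and splits under weak convergence as you showed.

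With Strauss compactness the contradiction is then immediate. You get $K_P(g)\le\liminf K_P(f_n)\le0$ and $g\neq0$, hence $T_{\omega,P}(g)\ge S_{\omega,P}(Q_{\omega,P})>B$. This contradicts $T_{\omega,P}(g)\le\liminf T_{\omega,P}(f_n)\le B$.

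The global solutions of Theorem~\ref{S vs B P} do satisfy $\sup_{t\ge t_0}T_{\omega,P}(u(t))<S_{\omega,P}(Q_{\omega,P})$ by the $z''>0$ argument, since $M$, $N$ and the potential term are invariant under multiplication by $e^{-\lambda i|x|^2}$.

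Minor slips:
\begin{itemize}
\item The coefficient in your identity for $E_P$ is $\frac{3-\mu}{6}$, not $\frac{3+\mu}{6}$; this is harmless since $\mu<3$.
\item Your first display is unfinished.
\item $K_P(f_n)<\frac1nL_{E_P}(f_n)$ gives only $\limsup K_P(f_n)\le0$, not $K_P(f_n)\to0$; that is all you actually use.
\end{itemize}
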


\begin{proof}
Since translation and the potential
$
	\frac{\gamma}{|x|^\mu}
$
are incompatible, it possible that we cannot make use of Lieb's compactness lemma.
Then, we need to modify the proof of Lemma \ref{Coercivity} slightly and rely on Strauss's radial compactness.

The first half claim that
$
	``L_{E_P}(f)
		\leq \frac{\mu}{2}C_{\omega,B}
$
deduces
$
	K_P(f)
		\geq \frac{\mu}{2}L_{E_P}(f)
$
for
$
	C_{\omega,B}^{-\frac{1}{2}}
		:= \frac{3}{2}C_{\rm GN}\(\frac{2B}{\omega}\)^\frac{1}{2}"
$
is similar to Lemma \ref{Coercivity}

On the other hand, if we deny the desired result when
$
	L_{E_P}(f)
		> \frac{\mu}{2}C_{\omega,B},
$
there exists
$
	\{f_n\}
		\subset H_{\rad}^1(\R^3)
$
such that
\begin{align}
	& L_{E_P}(f_n)
		> \frac{\mu}{2}C_{\omega,B}, \notag \\
	& T_\omega(f_n)
		\leq B
		< S_{\omega,P}(Q_{\omega,P}), \label{170} \\
	& K_P(f_n)
		< \frac{\mu}{2n}L_{E_P}(f_n). \label{171}
\end{align}
\eqref{171} implies that
\begin{align}\label{172}
	\mu\(1 - \frac{1}{2n}\)L_{E_P}(f_n)
		< \frac{3}{2}N(f_n).
\end{align}
Combining \eqref{170} and \eqref{172}, we notice that
$
	\sup_{n \in \N}\|f_n\|_{H^1}
		< \infty
$
holds and hence, we have
$
	K_P(f_n)
		\longrightarrow 0
$
as
$
	n
		\rightarrow \infty.
$
By Strauss's radial compactness, we assume that
\begin{align*}
	& f_n
		\rightharpoonup f_\infty \text{ in }H^1(\R^3), \\
	& f_n
		\longrightarrow f_\infty \text{ in }L^4(\R^3), \\
	& f_n
		\longrightarrow f_\infty \text{ a.e. }x \in \R^3
\end{align*}
as
$
	n
		\rightarrow \infty
$
by passing to a subsequence.
The rest of the proof is similar to that of Lemma \ref{Coercivity}.
\end{proof}

\begin{theorem}\label{Existence of CS P}
Assume that
$
	B
		\in (0,S_{\omega,P}(Q_{\omega,P}))
$
satisfy
$
	S_{\omega,P}^c(B)
		< \infty.
$
Then, there exits a solution
$
	u_c
$
to \eqref{NLSv} such that
$
	S_{\omega,P}(u_c)
		= S_{\omega,P}^c(B),
$
$
	\sup_{t \in [0,\infty)}T_\omega(u_c(t))
		\leq B,
$
and
$
	u_c
$
does not scatter in the forward time.
\end{theorem}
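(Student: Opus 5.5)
I will follow the proof of Theorem \ref{Existence of CS}, making the changes required by the potential and by working in the radial class. First take radial data $u_{n,0}$ with $S_{\omega,P}(u_{n,0})\searrow S_{\omega,P}^c(B)$, $\sup_{t\ge0}T_\omega(u_n(t))\le B$, and $u_n$ not scattering forward. The identity
\[
\omega M(u_n)+L_{E_P}(u_n(t))=2S_{\omega,P}(u_{n,0})+\tfrac{2}{p+1}N(u_n(t))
\]
then bounds $\sup_n\sup_t\|u_n(t)\|_{H^1}$, because $P\ge0$ and $N$ is controlled by $T_\omega\le B$. Next I apply a linear profile decomposition adapted to $e^{it\Delta_P}$ in $H^1_{\rad}(\R^3)$, as in \cite{HamIkeJMAA}. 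Radial symmetry removes spatial translations, so $x_n^j\equiv0$ and only time parameters $t_n^j$ appear. We also get Pythagorean expansions for $M$, $L_{E_P}$ (equivalently the $\dot H^1_P$ norm) and $N$, and smallness of the remainder in $S=L^5_{t,x}$. The decomposition is built on the linear propagator of \eqref{NLSv}. The space $S(I)=L^5_tL^5_x$ together with the Strichartz estimates of \cite{HamIkeJMAA} supply small data scattering and long time perturbation for \eqref{NLSv}, the analogues of Propositions \ref{SDS} and \ref{LTP}, as well as Proposition \ref{S-norm P}.

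\textbf{Step 1} rules out $J^\ast=0$ by small data scattering, exactly as before. \textbf{Step 2} rules out $J^\ast\ge2$. I build nonlinear profiles $v^j$ solving \eqref{NLSv}, with $v^j(-t_n^j)-e^{-it_n^j\Delta_P}u^j\to0$ in $H^1$. When $|t_n^j|\to\infty$ I use the wave operator for \eqref{NLSv}. Since $P$ decays, this is where the potential is genuinely felt; I would obtain it as in \cite{HamIkeJMAA}, by solving from infinity with the Strichartz estimates. I then need the nonlinear Pythagorean decomposition of Holmer--Roudenko type for $L_{E_P}$ and $N$ along the flow. Combining it with mass decoupling gives $\sup_tT_\omega(v^j(t-t_n^j))<B$. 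Lemma \ref{Coercivity P} gives $E_P\ge\frac C4L_{E_P}\ge0$ for each piece, so each profile carries strictly less action: $S_{\omega,P}(v^j)<S_{\omega,P}^c(B)$. By minimality every $v^j$ scatters. The sum $\sum_jv^j(t-t_n^j)+e^{it\Delta_P}U_n^J$ is then an approximate solution, and long time perturbation gives $\|u_n\|_{S}<\infty$, a contradiction. The orthogonality of the time parameters makes the cross terms in the nonlinearity vanish in $N(I)$.

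\textbf{Step 3} treats the case of one profile. If $t_n^1\to-\infty$, then $\|e^{it\Delta_P}u_{n,0}\|_{S(0,\infty)}\to0$ and small data scattering gives a contradiction. So either $t_n^1\equiv0$ or $t_n^1\to+\infty$. Set $u_c=v^1$. Long time perturbation shows that $\|u_c\|_{S(-t_n^1,\infty)}\to\infty$. In the case $t_n^1\to\infty$, the backward bound $\|u_c\|_{S(-\infty,0)}<\infty$ then forces $\|u_c\|_{S(0,\infty)}=\infty$. Mass and $N$ decoupling transfer the bound $\sup_{t\ge0}T_\omega(u_c(t))\le B$. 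Finally, minimality combined with the non-scattering of $u_c$ yields $S_{\omega,P}(u_c)=S_{\omega,P}^c(B)$; the remainder has nonnegative action by Lemma \ref{Coercivity P}.

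I expect the main obstacle to be the linear profile decomposition and the nonlinear profiles in the presence of $P=\gamma|x|^{-\mu}$. Its Pythagorean expansions must hold for $L_{E_P}$, and it needs existence of wave operators for $|t_n^j|\to\infty$. Both rely on dispersive and Strichartz estimates for $e^{it\Delta_P}$, together with the fact that $e^{it\Delta_P}-e^{it\Delta}$ becomes negligible along diverging time sequences. I would invoke \cite{HamIkeJMP, HamIkeJMAA} for these linear facts and check that their proofs need only radiality and the bounds $0<\mu<2$, $\gamma>0$.
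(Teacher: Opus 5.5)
Your proposal is correct and is essentially the paper's proof, which just combines the argument of Theorem \ref{Existence of CS} with \cite[Lemma 6.1]{HamIkeJMAA}. As in your sketch, this means a radial profile decomposition with only time parameters, nonlinear profiles taken relative to $e^{it\Delta_P}$, Lemma \ref{Coercivity P} to give every piece nonnegative action, and Steps 1--3 as in the free case. One small correction: the comparison of $e^{it\Delta_P}$ with $e^{it\Delta}$ along diverging times, which you list among the ingredients, is never used in your argument and should be dropped, since for $\mu\le1$ the potential is long range and no such comparison with the free flow is available.
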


The proof is based on an argument combined that of Theorem \ref{Existence of CS} and \cite[Lemma 6.1]{HamIkeJMAA}.

\begin{proposition}[Precompact flow of the critical solution]\label{Precompact flow of the critical solution P}
Let
$
	u_c
$
be a critical solution constructed in Proposition \ref{Existence of CS P}.
Then,
\begin{align*}
	K
		= \{u_c(t,\,\cdot\,) : t \in [0,\infty)\}
		\subset H_{\rad}^1
\end{align*}
is precompact in
$
	H^1.
$
\end{proposition}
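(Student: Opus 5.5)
We argue by contradiction, following the proof of Proposition \ref{Precompact flow of the critical solution}; radial symmetry now replaces the translation parameter. Suppose $K$ is not precompact in $H^1$. Then there exist a sequence $\{\tau_n\}\subset[0,\infty)$ and $\varepsilon>0$ such that no subsequence of $u_c(\tau_n)$ converges in $H^1$. If $\tau_n$ had a bounded subsequence, continuity of $t\mapsto u_c(t)$ in $H^1$ would give a convergent subsequence. Hence we may assume $\tau_n\to\infty$. The sequence $u_c(\tau_n)$ is bounded in $H^1$, uniformly in $n$, by Lemma \ref{Coercivity P} and the bounds $S_{\omega,P}(u_c)=S^c_{\omega,P}(B)$ and $T_\omega(u_c(t))\le B$. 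We then apply the linear profile decomposition adapted to \eqref{NLSv} in the radial class, i.e.\ the one used in \cite[Lemma 6.1]{HamIkeJMAA} and in the proof of Theorem \ref{Existence of CS P}. Because the data are radial, all spatial parameters vanish, $x_n^j\equiv 0$, and the profiles are propagated by $e^{it\Delta_P}$. Nonlinear profiles are built as in Theorem \ref{Existence of CS P}: for $t_n^j\to\pm\infty$ we use the wave operators for \eqref{NLSv}, and otherwise the solution with data $u^j$.

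Next, exactly as in Steps 1--3 of Theorem \ref{Existence of CS}, we exclude $J^\ast=0$ and $J^\ast\ge2$. The key inputs are the following.
\begin{itemize}
\item The Pythagorean expansions of $M$, $L_{E_P}$ and $N$, which are preserved along the flow on compact time intervals (the analogue of the Holmer--Roudenko lemma).
\item Lemma \ref{Coercivity P}, which turns a strict decrease of $T_\omega$ and $M$ for each profile into $S_{\omega,P}(v^j)<S^c_{\omega,P}(B)$.
\item The long time perturbation theory for \eqref{NLSv} in the spaces $S(I)=L^5_{t,x}$.
\end{itemize}
This leaves a single profile, $u_c(\tau_n)=e^{-it_n\Delta_P}u^1+U_n$. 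The case $t_n\to-\infty$ is excluded by small data scattering on $[0,\infty)$, as before. For $t_n\to+\infty$ we have $\|e^{it\Delta_P}u_c(\tau_n)\|_{S(-\infty,0)}\to0$. Small data theory then gives $\|u_c\|_{S(-\infty,\tau_n)}\to0$. Since $\tau_n\to\infty$, this forces $\|u_c\|_{S(\mathbb{R})}=0$, i.e.\ $u_c\equiv0$, a contradiction. Hence $t_n\equiv0$.

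It remains to show $U_n\to0$ in $H^1$. Arguing as at the end of Step 3, the nonlinear profile $v^1$ with $v^1(0)=u^1$ satisfies $\sup_{t\ge0}T_\omega(v^1(t))\le B$. It is non-scattering, so minimality forces $S_{\omega,P}(u^1)=S^c_{\omega,P}(B)$. From the decompositions of $M$ and $N$ we get $\limsup_n T_\omega(U_n)\le B-T_\omega(u^1)<B<S_{\omega,P}(Q_{\omega,P})$. Lemma \ref{Coercivity P} then yields $S_{\omega,P}(U_n)\gtrsim \|U_n\|_{H^1}^2$, where $L_{E_P}$ controls $L$ because $\gamma>0$. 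Orthogonality of the action gives $S_{\omega,P}(u_c(\tau_n))=S_{\omega,P}(u^1)+S_{\omega,P}(U_n)+o_n(1)$. Combined with $S_{\omega,P}(u_c)=S_{\omega,P}(u^1)$, this gives $S_{\omega,P}(U_n)\to0$, so $U_n\to0$ in $H^1$. Thus $u_c(\tau_n)\to u^1$ in $H^1$, contradicting the choice of $\{\tau_n\}$.

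The main obstacle is the profile decomposition step for the operator $\Delta_P$. We need two things from the linear flow $e^{it\Delta_P}$: the Strichartz estimates, and the fact that radial profiles do not escape to spatial infinity. That is what lets us dispense with the translations $x_n^j$ and the quotient space $H^1/{\sim}$ used in Proposition \ref{Precompact flow of the critical solution}. We also need the asymptotic Pythagorean expansion of $\int P|f|^2$, in particular that the potential term of the remainder decouples. For these facts I would first try to import the radial linear profile decomposition and the wave-operator construction from \cite{HamIkeJMAA}. The remaining steps then transfer verbatim from Section \ref{Sec: S}.
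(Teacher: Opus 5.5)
Your proposal is correct and follows the paper's route: the paper just says to rerun the argument after \eqref{140}, and you do exactly that (profile decomposition, $J^\ast=1$, exclusion of $t_n\to\pm\infty$, then $U_n\to0$ in $H^1$ via coercivity), with radial symmetry removing the translation parameters. The only difference is framing: the paper argues directly that every sequence $u_c(\tau_n)$ has an $H^1$-convergent subsequence, while you argue by contradiction, which is logically the same here since, as you note, the quotient space $H^1/{\sim}$ is no longer needed.
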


\begin{proof}
Unlike Proposition \ref{Precompact flow of the critical solution}, the proposition is shown directly, not the contradiction.
However, it is similar to the argument after \eqref{140}.
\end{proof}

The next two lemmas are shown as Lemmas \ref{UL and order of x} and \ref{LB of K} respectively.

\begin{lemma}\label{UL P}
Let
$
	u
$
be a forward time-global solution to \eqref{NLSv} satisfying that
\begin{align*}
	K
		= \{u(t,\cdot) : t \in [0,\infty)\}
		\subset H_{\rad}^1(\R^d)
\end{align*}
is precompact.
Then, for each
$
	\e
		> 0,
$
there exists
$
	R
		= R_{\e}
		> 0
$
such that
\begin{align*}
	& \int_{|x| > R}\(|u(t,x)|^2 + |\nabla u(t,x)|^2 + \frac{\gamma}{|x|^\mu}|u(t,x)|^2 + |u(t,x)|^4\)dx
		\leq \e
\end{align*}
for any
$
	t
		\in [0,\infty).
$
\end{lemma}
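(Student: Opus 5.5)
The plan is to follow the standard argument behind the first half of Lemma \ref{UL and order of x} (cf. \cite[Corollary 3.3]{DuyHolRou08}). Since there is no translation parameter here, I expect it to simplify. The basic fact is that a precompact subset of $H^1(\R^3)$ is uniformly small outside large balls. I argue by contradiction. Suppose there exist $\e>0$, radii $R_n\to\infty$ and times $t_n\in[0,\infty)$ such that
\begin{align*}
	\int_{|x|>R_n}\(|u(t_n,x)|^2 + |\nabla u(t_n,x)|^2 + \frac{\gamma}{|x|^\mu}|u(t_n,x)|^2 + |u(t_n,x)|^4\)dx
		> \e .
\end{align*}
By the precompactness of $K$, after passing to a subsequence, $u(t_n)\to\phi$ in $H^1(\R^3)$ for some $\phi\in H^1_{\rad}(\R^3)$.

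Next I control each of the four terms by the $H^1$ norm. The $L^2$ and $\dot H^1$ parts are immediate. For the $L^4$ term I use the Sobolev embedding $H^1(\R^3)\hookrightarrow L^4(\R^3)$ together with the elementary inequality $\big||a|^4-|b|^4\big|\lesssim |a-b|(|a|^3+|b|^3)$ and H\"older's inequality. The potential term needs the most care. On $\{|x|>R\}$ with $R\ge 1$ we have $\frac{\gamma}{|x|^\mu}\le \gamma$, so this term is bounded by $\gamma\int_{|x|>R}|f|^2dx$. Hence no Hardy-type inequality is needed once $R\ge1$. Alternatively, Hardy's inequality with $0<\mu<2$ gives $\int \frac{|f|^2}{|x|^\mu}dx\lesssim\|f\|_{H^1}^2$. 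Writing $F(f,R)$ for the left-hand side, this yields
\begin{align*}
	F(f,R)^{1/2} \lesssim \|f\|_{H^1} + \|f\|_{H^1}^2 \quad \text{for } R \geq 1,
\end{align*}
and the map $f\mapsto F(f,R)$ is continuous on $H^1$, uniformly in $R\ge1$. More concretely, $|F(f,R)-F(g,R)|\le C(\|f\|_{H^1}+\|g\|_{H^1})\,\|f-g\|_{H^1}\,(1+\|f\|_{H^1}+\|g\|_{H^1})^2$.

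To conclude, I use the convergence $u(t_n)\to\phi$ and the uniform continuity just established. These give $|F(u(t_n),R_n)-F(\phi,R_n)|\to0$. Meanwhile $F(\phi,R_n)\to0$ by dominated convergence, since $\phi\in H^1$ (so $|\phi|^4$ and $|x|^{-\mu}|\phi|^2$ are integrable). Therefore $F(u(t_n),R_n)\to0$, which contradicts the assumption that it stays above $\e$. The main (mild) obstacle is this uniform continuity of the localized functional, in particular for the singular potential term. That is handled by restricting to $R\ge1$ as above, or by Hardy's inequality.
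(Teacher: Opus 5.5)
Your proposal is correct and is essentially the paper's argument. The paper proves this lemma by pointing to the standard ``precompactness implies uniform smallness outside a ball'' argument of \cite{DuyHolRou08}, exactly as for Lemma \ref{UL and order of x} but without a translation parameter. Your observation that $\gamma|x|^{-\mu}\le\gamma$ on $\{|x|>R\}$ once $R\ge1$ is precisely what makes the additional potential term harmless.
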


\begin{lemma}\label{LB of K P}
Let
$
	u
		\in C_t([0,\infty);H_x^1)
$
be a forward time-global solution to \eqref{NLSv}.
Assume that
\begin{align*}
	K
		= \{u(t,\,\cdot\,) : t \in [0,\infty)\}
			\subset H^1(\R^3)
\end{align*}
is precompact in
$
	H^1.
$
Then, there exists
$
	A
		> 0
$
such that
$
	A \cdot M(u)
		\leq L_{E_P}(u)
$
for any
$
	0
		\leq t
		< \infty.
$
\end{lemma}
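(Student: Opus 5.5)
We argue by contradiction, following the reference cited for Lemma \ref{LB of K}. Suppose no such $A>0$ exists. Then there is a sequence of times $t_n\in[0,\infty)$ with
\begin{align*}
	L_{E_P}(u(t_n)) < \frac{1}{n}M(u(t_n)) = \frac{1}{n}M(u_0),
\end{align*}
where mass conservation gives the last equality. Since $\gamma>0$, we have $L_{E_P}(f)\geq L(f)$, so $L(u(t_n))\to0$ and $\int\frac{\gamma}{|x|^\mu}|u(t_n)|^2dx\to0$. If $M(u_0)=0$ the claim is trivial, since $u\equiv0$ and any $A$ works. Hence we assume $M(u_0)>0$.

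Next we use the precompactness of $K$. Passing to a subsequence, $u(t_n)\to\phi$ strongly in $H^1(\R^3)$ for some $\phi\in H^1$. Strong convergence preserves the mass, so $M(\phi)=M(u_0)>0$. Since $L(u(t_n))\to0$ and $L$ is continuous on $H^1$, we get $\nabla\phi=0$. The only $L^2$ function with vanishing gradient is $\phi=0$, which contradicts $M(\phi)>0$.

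The potential term needs no separate treatment, because it is nonnegative and only strengthens the smallness of $L(u(t_n))$. The one point that requires care is that the limit stays away from zero. This is exactly why precompactness is used rather than mere boundedness: precompactness excludes mass escaping to spatial infinity, which is the only way $L\to0$ could coexist with fixed positive mass. Under this hypothesis the step is immediate. Since the set $K$ contains no translation parameter here, no modulation $x(t)$ enters the argument.
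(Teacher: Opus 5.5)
Your proof is correct and is essentially the argument the paper relies on: it proves this lemma "as Lemma \ref{LB of K}", deferring to the cited reference, whose proof is this same compactness-plus-mass-conservation contradiction. A sequence with $L_{E_P}(u(t_n))\to 0$ would converge in $H^1$ to a function with vanishing gradient but mass $M(u_0)>0$, which is impossible. Your remarks that $\gamma>0$ makes the potential term harmless and that no translation parameter $x(t)$ is needed are exactly the adaptation intended.
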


\begin{proposition}
There exists no the critical solution
$
	u_c
$
constructed in Theorem \ref{Existence of CS P}.
\end{proposition}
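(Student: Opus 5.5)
We argue as in the NLS case, by a localized virial (Morawetz-type) argument, with one simplification: the critical solution is radial and its flow $\{u_c(t):t\ge 0\}$ is precompact in $H^1$ with no translation parameter (Proposition \ref{Precompact flow of the critical solution P}). So there is no path $x(t)$ to control, and Lemma \ref{UL and order of x} is not needed. We take the cut-off $\mathscr{X}_R$ from \eqref{002} and set
\[
I(t)=\int_{\R^3}\mathscr{X}_R(x)|u_c(t,x)|^2dx .
\]
As in \eqref{146}, $|I'(t)|\le cR\sup_t\|u_c(t)\|_{H^1}^2\le CR$. The $H^1$ bound is uniform because $\sup_t T_\omega(u_c(t))\le B$ and the action is conserved. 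Coercivity (Lemma \ref{Coercivity P}) then gives $L_{E_P}(u_c(t))\lesssim E_P(u_c)$, which controls $L(u_c(t))$ since $P\ge 0$.

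For the second derivative we use the virial identity for \eqref{NLSv}. On $|x|\le R$ we have $\mathscr{X}_R=|x|^2$, so the potential enters through $-\int \nabla\mathscr{X}_R\cdot\nabla P\,|u_c|^2dx$. On that ball this term equals $\mu\int\frac{\gamma}{|x|^\mu}|u_c|^2dx$, which is exactly the extra term in $K_P$. This gives
\[
I''(t)=4K_P(u_c(t))+\widetilde R_1+R_2+\widetilde R_3+R_P ,
\]
with $\widetilde R_1,R_2,\widetilde R_3$ as in \eqref{185} and
\[
|R_P|\lesssim\int_{|x|\ge R}\frac{\gamma}{|x|^\mu}|u_c|^2dx .
\]
The bound on $R_P$ uses $|\nabla\mathscr{X}_R|\lesssim |x|$ and $|x\cdot\nabla P|\lesssim P$. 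Combining all remainders,
\[
|\widetilde R_1+R_2+\widetilde R_3+R_P|\lesssim\int_{|x|\ge R}\Bigl(|\nabla u_c|^2+|u_c|^4+\frac{\gamma}{|x|^\mu}|u_c|^2+\frac{1}{R^2}|u_c|^2\Bigr)dx .
\]

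Next we use coercivity together with Lemma \ref{LB of K P}: $4K_P(u_c(t))\ge 4C L_{E_P}(u_c(t))\ge 4CA\,M(u_c)$. Lemma \ref{UL P} then lets us choose $R$ large, independent of $t$, so that the tail integral is at most $CA\,M(u_c)$ for all $t\ge0$. This gives $I''(t)\ge 2CA\,M(u_c)$ on $[0,\infty)$. Integrating over $[0,T]$,
\[
2CA\,M(u_c)\,T\le |I'(T)|+|I'(0)|\le 2CR .
\]
Since $R$ is fixed, letting $T\to\infty$ forces $M(u_c)=0$, so $u_c\equiv0$. This contradicts the fact that $u_c$ does not scatter in the forward time.

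The main obstacle is the virial computation with the singular potential. We must justify $I''$ for $H^1$ radial solutions with $P=\gamma|x|^{-\mu}$, $0<\mu<2$; this is done by regularizing the data and using Hardy's inequality, which makes $\int|x|^{-\mu}|u|^2$ and $\int|x|^{1-\mu}|u||\nabla u|$-type terms finite. We must also check that the potential contribution off the ball $|x|\le R$ really is controlled by the tail quantity in Lemma \ref{UL P}. If the sign of the smooth transition region of $\mathscr{X}$ causes trouble, we would instead bound that term crudely by $\int_{|x|\ge R}|x|^{-\mu}|u_c|^2\le R^{-\mu}M(u_c)$, which is also small for large $R$.
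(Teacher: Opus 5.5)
Your proposal is correct and is essentially the paper's argument: a localized virial with the radial cut-off $\mathscr{X}_R$ and no translation parameter, $|I'(t)|\lesssim R$ from the uniform $H^1$ bound, $I''(t)\ge 4K_P(u_c(t))-(\text{tail})\gtrsim M(u_c)$ via Lemmas \ref{Coercivity P}, \ref{LB of K P} and \ref{UL P}, and a contradiction from the linear growth of $I'$. The only slip is a harmless normalization: the potential term in the virial identity is $-2\int_{\R^3}\nabla\mathscr{X}_R\cdot\nabla P\,|u_c|^2dx$, and its part on $\{|x|\le R\}$ equals $4\mu\int_{|x|\le R}\frac{\gamma}{|x|^\mu}|u_c|^2dx$, which matches the corresponding term of $4K_P$.
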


\begin{proof}
Let
$
	r
		= |x|.
$
A cut-off function
$
	\mathscr{X}_R
		\in C_0^\infty(\R^3)
$
is as in \eqref{002}.
We set
\begin{align*}
	I(t)
		= \int_{\R^3}\mathscr{X}_R(x)|u_c(t,x)|^2dx.
\end{align*}
Then, it follows from that
\begin{align}
	|I'(t)|
		& \leq 2R\left|\int_{\R^3}\mathscr{X}'\left(\frac{r}{R}\right)\frac{x\cdot\nabla u_c(t,x)}{r}\overline{u_c(t,x)}dx\right| \notag \\
		& \leq c R\|u_c(t)\|_{H^1}^2
		\leq c R\{S_{\omega,P}^c(B) + B\}
		\leq cR \label{166}
\end{align}
for any
$
	0
		\leq t
		< \infty.
$
In addition, we have
\begin{align*}
	I''(t)
		& = 4K_P(u_c(t)) + \{R_1 - 8L_{E_P}(u_c(t))\} + R_2 \\
		& \qquad + \left\{R_3 + 6N(u_c(t))\right\} + \left\{R_4 - 4\mu\int_{\R^3}\frac{\gamma}{|x|^\mu}|u_c(t,x)|^2dx\right\} \\
		& =: 4K_P(u_c(t)) + \~{R}_1 + R_2 + \~{R}_3 + \~{R}_4,
\end{align*}
where
$
	R_j
$
$
	(1\leq j\leq 3)
$
are defined in \eqref{185} with
$
	d
		= p
		= 3
$
and
$
	R_4
$
is
\begin{align*}
	& R_4
		= 2\mu\int_{\R^3}\frac{R}{r}\mathscr{X}'\(\frac{r}{R}\)\frac{\gamma}{|x|^\mu}|u_c(t,x)|^2dx.
\end{align*}
Noting the inequality
\begin{align*}
	|\~{R}_1 + R_2 + \~{R}_3 + \~{R}_4|
		\lesssim \int_{|x|\geq R}\(|\nabla u_c(t,x)|^2 + |u_c(t,x)|^4 + \frac{1}{R^\mu}|u_c(t,x)|^2\)dx,
\end{align*}
we have
\begin{align*}
	I''(t)
		\geq 4K_P(u_c(t)) - C\int_{|x|\geq R}\(|\nabla u_c(t,x)|^2 + |u_c(t,x)|^4 + \frac{1}{R^\mu}|u_c(t,x)|^2\)dx.
\end{align*}
Combining this inequality, Lemma \ref{Coercivity P}, and Lemma \ref{LB of K P}, it follows that
\begin{align*}
	I''(t)
		& \gtrsim M(u_c) - \int_{|x|\geq R}\(|\nabla u_c(t,x)|^2 + |u_c(t,x)|^{p+1} + \frac{1}{R^\mu}|u_c(t,x)|^2\)dx.
\end{align*}
By Lemma \ref{UL P}, there exists
$
	R_0
		> 1
$
such that
\begin{align*}
	\int_{|x|\geq R_0}\(|\nabla u_c(t,x)|^2 + |u_c(t,x)|^{p+1} + \frac{1}{R^\mu}|u_c(t,x)|^2\)dx
		< \frac{1}{2}M(u_c)
\end{align*}
for any
$
	0
		\leq t
		< \infty.
$
Taking a such
$
	R_0
		> 1,
$
we have
\begin{align*}
	I''(t)
		\geq \frac{C}{2}M(u_c)
		> 0
\end{align*}
for any
$
	t
		\in [0,\infty).
$
This implies that
$
	I'(t)
		\longrightarrow \infty
$
as
$
	t
		\rightarrow \infty.
$
However, this contradicts \eqref{166}.
\end{proof}

\appendix

\section{Alternative proof of a coercivity}\label{Alternative}

In this section, we provide an alternative proof of Lemma \ref{Coercivity}.
Unlike its proof in Section \ref{Sec: S}, we use the scaling property of the equation and find an explicit constant of the inequality.
In particular, we prove the following:
\begin{lemma}
Let
$
	0
		< B
		< S_\omega(Q_\omega).
$
For any
$
	f
		\in H^1(\R^d)
$
with
$
	T_\omega(f)
		\leq B,
$
we have
\begin{align*}
	K(f)
		\geq 2\left\{1 - \(\frac{B}{S_{\omega}(Q_{\omega})}\)^\frac{2}{d}\right\}L(f)
	\ \text{ and }\ 
	E(f)
		\geq \frac{1}{2}\left\{1 - \(\frac{B}{S_{\omega}(Q_{\omega})}\)^\frac{2}{d}\right\}L(f).
\end{align*}
\end{lemma}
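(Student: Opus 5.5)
The plan is to make the argument of Lemma \ref{Coercivity} quantitative. I will bound the ratio $N(f)/L(f)$ by the scale-invariant quantity $M(f)^{1-s_c}N(f)^{s_c}$ using the Gagliardo--Nirenberg inequality \eqref{G-N inequality}. I will then bound that quantity by $T_\omega(f)$ using the Young inequality from the proof of Proposition \ref{Prop: K, MN, and T}. The equality cases are fixed by the ground state, so every constant can be normalized against $Q_\omega$.

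\textbf{Step 1: the ratio $N/L$.} Write \eqref{G-N inequality} as $N\le C_{\rm GN}M^{a}L^{b}$ with $a=\frac{(p-1)(1-s_c)}{2}$ and $b=\frac{(p-1)s_c}{2}+1$. This gives $L\ge (N/(C_{\rm GN}M^a))^{1/b}$, and hence
\begin{align*}
	\frac{N(f)}{L(f)}\le C_{\rm GN}^{1/b}M(f)^{a/b}N(f)^{(b-1)/b}.
\end{align*}
The exponents satisfy $a:(b-1)=(1-s_c):s_c$, so the right-hand side equals $C_{\rm GN}^{1/b}\bigl(M(f)^{1-s_c}N(f)^{s_c}\bigr)^{\frac{p-1}{2b}}$. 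Using $s_c=\frac d2-\frac2{p-1}$ one checks that $\frac{p-1}{2b}=\frac{p-1}{(p-1)s_c+2}=\frac2d$. Since $Q_\omega$ is an optimizer of \eqref{G-N inequality} and $K(Q_\omega)=0$ by \eqref{PI}, equality holds at $Q_\omega$ with $N/L=\frac{2(p+1)}{d(p-1)}$. Therefore
\begin{align*}
	\frac{N(f)}{L(f)}\le \frac{2(p+1)}{d(p-1)}\left(\frac{M(f)^{1-s_c}N(f)^{s_c}}{M(Q_\omega)^{1-s_c}N(Q_\omega)^{s_c}}\right)^{\frac2d}.
\end{align*}

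\textbf{Step 2: comparison with $T_\omega$.} As in the step (2)$\Rightarrow$(3) of Proposition \ref{Prop: K, MN, and T}, weighted Young's inequality gives
\begin{align*}
	T_\omega(f)\ge \left\{\frac{\omega}{2(1-s_c)}M(f)\right\}^{1-s_c}\left\{\frac{p-1}{2(p+1)}N(f)\right\}^{s_c}.
\end{align*}
Equality holds for $Q_\omega$ because of \eqref{PI}, which is exactly the computation of $T_\omega(Q_\omega)$ in that proof. Dividing and using $T_\omega(Q_\omega)=S_\omega(Q_\omega)$ from \eqref{136}, I obtain
\begin{align*}
	\frac{M(f)^{1-s_c}N(f)^{s_c}}{M(Q_\omega)^{1-s_c}N(Q_\omega)^{s_c}}\le \frac{T_\omega(f)}{S_\omega(Q_\omega)}\le \frac{B}{S_\omega(Q_\omega)}.
\end{align*}

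\textbf{Step 3: conclusion.} Put $\theta:=(B/S_\omega(Q_\omega))^{2/d}$. Steps 1 and 2 give $\frac{d(p-1)}{p+1}N(f)\le 2\theta L(f)$, so $K(f)=2L(f)-\frac{d(p-1)}{p+1}N(f)\ge 2(1-\theta)L(f)$. For the energy I write
\begin{align*}
	E(f)=\frac12L(f)-\frac1{p+1}N(f)\ge \left(\frac12-\frac{2\theta}{d(p-1)}\right)L(f).
\end{align*}
Since $p>1+\frac4d$, we have $\frac{2}{d(p-1)}<\frac12$, so the right-hand side is at least $\frac12(1-\theta)L(f)$.

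\textbf{Main obstacle.} The only delicate point is the exponent bookkeeping in Steps 1 and 2. One must check that the power produced by Gagliardo--Nirenberg is exactly $2/d$. One must also check that the Young splitting uses weights $1-s_c$ and $s_c$, with coefficients matched so that equality at $Q_\omega$ holds simultaneously in both inequalities. Both facts reduce to the Pohozaev identities \eqref{PI}.
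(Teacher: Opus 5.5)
Your proposal is correct and follows essentially the same route as the paper's appendix proof. Both arguments use Gagliardo--Nirenberg to control $N(f)/L(f)$ by $(M(f)^{1-s_c}N(f)^{s_c})^{2/d}$, the weighted Young bound $T_\omega(f)\ge\{\frac{\omega}{2(1-s_c)}M(f)\}^{1-s_c}\{\frac{p-1}{2(p+1)}N(f)\}^{s_c}$, and the Pohozaev identities \eqref{PI} to identify the constant. The only difference is that you normalize via the simultaneous equality cases at $Q_\omega$, where the paper computes the coefficient of $B^{2/d}$ explicitly; the two computations are equivalent.
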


\begin{proof}
As in Lemma \ref{Coercivity}, we prove only the first inequality.
We note that it follows from the Gagliardo-Nirenberg inequality \eqref{G-N inequality} that
\begin{align*}
	N(f)^\frac{2}{(p-1)s_c+2}
		\leq C_{\rm GN}^\frac{2}{(p-1)s_c+2}M(f)^\frac{(p-1)(1-s_c)}{(p-1)s_c+2}L(f).
\end{align*}
Thus, we have
\begin{align*}
	K(f)
		& \geq \left\{2 - \frac{d(p-1)}{p+1}C_{\rm GN}^\frac{2}{(p-1)s_c+2}M(f)^\frac{(p-1)(1-s_c)}{(p-1)s_c+2}N(f)^\frac{(p-1)s_c}{(p-1)s_c+2}\right\}L(f).
\end{align*}
We estimate the second term by using
\begin{align*}
	\left\{\frac{\omega}{2(1-s_c)}M(f)\right\}^{1-s_c}\left\{\frac{p-1}{2(p+1)}N(f)\right\}^{s_c}
		\leq T_\omega(f)
		\leq B.
\end{align*}
Then, we have
\begin{align*}
	K(f)
		& \geq \[2 - \frac{d(p-1)}{p+1}C_{\rm GN}^\frac{2}{(p-1)s_c+2}\left\{\frac{2(1-s_c)}{\omega}\right\}^\frac{(p-1)(1-s_c)}{(p-1)s_c+2}\left\{\frac{2(p+1)}{p-1}\right\}^\frac{(p-1)s_c}{(p-1)s_c+2}B^\frac{p-1}{(p-1)s_c+2}\]L(f).
\end{align*}
The coefficient of
$
	B
$
is calculated by the Pohozaev identity \eqref{PI} as
\begin{align*}
	\left\{\frac{d(p-1)}{p+1}\right\}^\frac{(p-1)s_c+2}{p-1}C_{\rm GN}^\frac{2}{p-1}\left\{\frac{2(1-s_c)}{\omega}\right\}^{1-s_c}\left\{\frac{2(p+1)}{p-1}\right\}^{s_c}
		= 2^\frac{(p-1)s_c+2}{p-1}S_\omega(Q_\omega)^{-1},
\end{align*}
which implies the desired result.
\end{proof}

\subsection*{Acknowledgements}
The author is supported by JSPS KAKENHI Grant Number 25K17285 and JST CREST Grant Number JPMJCR24Q6.

\end{document}